\newtheorem{theorem}{Theorem}[section]
\newtheorem{lemma}[theorem]{Lemma}
\newtheorem{proposition}[theorem]{Proposition}
\newtheorem{conjecture}[theorem]{Conjecture}
\theoremstyle{definition}
\newtheorem{definition}[theorem]{Definition}
\theoremstyle{remark}
\newtheorem{remark}[theorem]{Remark}
\numberwithin{equation}{section}
\title{congruence relations of GSpin Shimura varieties}
\author{Hao Li}
\date{April 22, 2019.}
\begin{document}

\begin{abstract}
We prove the Chai-Faltings version of the Eichler-Shimura congruence relation for simple GSpin Shimura varieties with hyperspecial level structures at a prime $p$. 
\end{abstract}
\maketitle 

\section{Introduction}

Let $G$ be a reductive group over $\mathbb{Q}$, and $(G, X)$ be a Shimura datum associated to $G$ with reflex field $E$. Fix a prime $p$ over which $G$ is nonramified. Let $K=K^pK_p\subset G(\mathbb{A}_f)$ such that $K_p$ is a hyperspecial subgroup of $G(\mathbb{Q}_p)$ and $K^p$ is sufficiently small. There is a smooth complex manifold $G(\mathbb{Q})\backslash X\times G(\mathbb{A}_f)/K$ which admits a canonical algebraic variety structure defined over $E$ and denote it by $Sh_K$. Its base change to the algebraic closure of $E$, $Sh_K\times_{E}\overline{E}$ (as a scheme over $E$) admits a natural $Gal(\overline{E}/E)$ action, which induces a $Gal(\overline{E}/E)$ action on the cohomology. On the other hand, the Shimura varieties with deeper level structures at $p$ define a family of self correspondences of $Sh_K\times_{E}\overline{E}$, which generate a subalgebra of $Corr(Sh_K\times_{E}\overline{E}, \overline{\mathbb{Q}}_l)$, via the algebra following homomorphism
\begin{equation*}
\mathscr{H}(G(\mathbb{Q}_p)//K_p,\mathbb{Q})\longrightarrow Corr(Sh_K\times_{E}\overline{E},\overline{\mathbb{Q}}_l)
\end{equation*}
in which $\mathscr{H}(G(\mathbb{Q}_p)//K_p,\mathbb{Q})$ ($l$ is a prime different from $p$) is the spherical Hecke algebra at $p$. One of the major question in arithmetic geometry and automorphic forms is to understand the relation between these two actions. 

In\cite{BR}, Blasius and Rogawski constructed the Hecke polynomial $H(X)$, which is a polynomial with coefficients in $\mathscr{H}(G(\mathbb{Q}_p)//K_p,\mathbb{Q})$, and conjectured that
\begin{conjecture}
In the algebra $End_{\overline{\mathbb{Q}}_l}(H^{\bullet}(\overline{Sh_{K,\overline{E}}},IC^\bullet))$, the following equation holds
\begin{equation}
H(Fr_p) = 0
\end{equation}
in which $\overline{Sh_K}$ means the Baily-Borel compactification of the Shimura variety and $IC^\bullet$ is the intersection cohomology sheaf with middle perversity, and $Fr_p$ is the conjugacy class of the geometric Frobenius in $Gal(\overline{\mathbb{Q}}/E)$. 
\end{conjecture}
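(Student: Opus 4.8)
I sketch the plan in the case treated in this paper: $Sh_K$ a simple GSpin Shimura variety with $K_p$ hyperspecial, where the reflex field is $E=\mathbb{Q}$, so $Fr_p$ is the conjugacy class of a geometric Frobenius in $\mathrm{Gal}(\overline{\mathbb{Q}}/\mathbb{Q})$ over $p$. The plan is to deduce the vanishing from a geometric identity of algebraic correspondences in characteristic $p$. First I would fix the smooth integral canonical model $\mathscr{S}_K$ of $Sh_K$ over $\mathbb{Z}_{(p)}$, the integral Kuga--Satake abelian scheme $\mathcal{A}\to\mathscr{S}_K$ together with the embedding of $\mathscr{S}_K$ into a Siegel modular scheme over $\mathbb{Z}_{(p)}$, and a smooth proper toroidal compactification $\mathscr{S}_K^{\mathrm{tor}}$ over $\mathbb{Z}_{(p)}$ dominating the minimal compactification, to which the prime-to-$p$ and parahoric-at-$p$ Hecke correspondences extend (Kisin, Madapusi Pera, Lan). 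By smooth proper base change, the representation $H^\bullet(\mathscr{S}_{K,\overline{\mathbb{Q}}}^{\mathrm{tor}},\overline{\mathbb{Q}}_l)$ is unramified at $p$ and is computed by the geometric Frobenius $\mathrm{Frob}_p$ acting on $H^\bullet(\mathscr{S}_{K,\overline{\mathbb{F}}_p}^{\mathrm{tor}},\overline{\mathbb{Q}}_l)$; and by the decomposition theorem $H^\bullet(\overline{Sh_{K,\overline E}},IC^\bullet)$ is a Hecke- and Galois-stable direct summand of it. So it suffices to prove $H_p(\mathrm{Frob}_p)=0$ on this summand, hence to prove the corresponding identity of $\overline{\mathbb{Q}}_l$-cohomological correspondences on $\mathscr{S}_{K,\overline{\mathbb{F}}_p}^{\mathrm{tor}}$ modulo correspondences acting by zero on the $IC$-summand.

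\emph{The congruence relation on the special fibre.} Over $\overline{\mathbb{F}}_p$ the Hecke correspondences of parahoric level at $p$ parametrize quasi-isogenies of the Kuga--Satake $p$-divisible groups preserving the GSpin crystalline tensors, equivalently lattice chains in the GSpin isocrystal. Stratifying such a correspondence by relative position and applying contravariant Dieudonn\'e theory, each piece becomes a word in the relative Frobenius $F$ and the Verschiebung $V$ of the universal $p$-divisible group with $G$-structure; this is recorded by the $G$-zip (equivalently $F$-zip) attached to $\mathscr{S}_{K,\overline{\mathbb{F}}_p}$, whose unique open dense stratum is the $\mu$-ordinary locus, $\mu$ the minuscule Shimura cocharacter. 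Over the $\mu$-ordinary locus the relevant $F$-isocrystal is the explicit one of Kottwitz and Moonen; the Blasius--Rogawski polynomial $H_p(X)$, built from the standard representation $r_\mu$ of ${}^LG$ dual to $\mu$, has the feature that $H_p(\mathrm{Frob}_p)$ is the characteristic polynomial of $F$ on the standard $F$-isocrystal, i.e.\ the crystalline realization of the tautological quadratic-space local system, which sits inside $\mathrm{End}$ of the Kuga--Satake Dieudonn\'e isocrystal. Since over the $\mu$-ordinary locus every $p$-power isogeny of the Kuga--Satake group is a word in $F$ and $V$ and $FV=VF=p$, Cayley--Hamilton gives $H_p(\mathrm{Frob}_p)=0$ there as a correspondence. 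One then checks that the closure of each remaining contributing cycle is supported on the non-$\mu$-ordinary locus together with the boundary: this is the Chai--Faltings form of the relation, an identity modulo cycles on a proper closed subset.

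\emph{From the ordinary locus to $IC$, and the main obstacle.} To upgrade this to $H_p(\mathrm{Frob}_p)=0$ on the $IC$-summand, one must show that the correction cycles, supported on the non-$\mu$-ordinary locus together with the boundary, act by zero there. This needs the dimension theory of the Ekedahl--Oort and Newton stratifications of GSpin Shimura varieties coming from $G$-zips and affine Deligne--Lusztig varieties: the non-$\mu$-ordinary locus has positive codimension, and --- more to the point --- the excess-intersection cycles sit in codimension large enough to be killed inside the $IC$-summand; the simplicity hypothesis keeps the boundary contributions mild (or removes them), and hyperspeciality at $p$ supplies the clean affine-Weyl combinatorics behind the stratification. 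I expect this control of the non-$\mu$-ordinary and boundary corrections --- rather than the $\mu$-ordinary computation, which is essentially formal once the dictionary is in place --- to be the main obstacle. A subsidiary point is to verify that the group-theoretic Hecke polynomial of $\mathrm{GSpin}$ matches the one pulled back from $\mathrm{GSp}$ along the Kuga--Satake homomorphism, a finite computation with minuscule weights; this is what makes the Dieudonn\'e-theoretic computation on $\mathcal{A}$ actually compute $H_p(\mathrm{Frob}_p)$. Assembling these, $H_p(\mathrm{Frob}_p)=0$ on the $IC$-summand over $\overline{\mathbb{F}}_p$, and smooth proper base change transports this to $H_p(Fr_p)=0$ in $\mathrm{End}_{\overline{\mathbb{Q}}_l}\bigl(H^\bullet(\overline{Sh_{K,\overline E}},IC^\bullet)\bigr)$, which proves the conjecture for simple GSpin Shimura varieties with hyperspecial level at $p$.
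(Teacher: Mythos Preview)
The statement you are attempting to prove is a \emph{conjecture} in the paper, not a theorem; the paper does not give a proof of it. What the paper actually proves is the Chai--Faltings cycle-theoretic version (Conjecture~\ref{conj}/\ref{conj2}): the identity $H_{\mu^{KS}}(F)=0$ in the algebra $\mathbb{Q}[p\text{-}Isog_k]$ of $p$-isogeny cycles on the special fibre. The author explicitly remarks in Section~\ref{intvscpt} that while the cycle identity yields the congruence relation on compactly supported cohomology, ``for the intersection cohomology, I see no easy way to prove the Blasius--Rogawski congruence relation directly from Chai--Faltings' congruence relation.'' So there is no ``paper's own proof'' of this statement to compare against.

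Your proposal contains a genuine, self-acknowledged gap: you write that controlling the correction cycles supported on the non-$\mu$-ordinary locus and the boundary is ``the main obstacle,'' but you do not carry it out. This is precisely where the paper's cycle-level argument does real work and where your sketch diverges from it. In the paper, the non-$\mu$-ordinary contributions are handled not by showing they act trivially on some cohomological summand, but by a dichotomy at the level of cycles: in the split cases, a dimension count using Rapoport's formula for affine Deligne--Lusztig varieties (Proposition~\ref{nofh} and the theorem following it) shows that no non-ordinary components of the correct dimension can occur in $\sigma(H_p)$; in the quasi-split nonsplit case, supersingular components \emph{do} occur, and one factors $H_{\mu^{KS}}(X)=(X^2-p^{2(n-1)}\langle p\rangle)\cdot R(X)$ and proves via the Howard--Pappas description of the basic Rapoport--Zink space that the first factor annihilates every supersingular component. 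Your plan of ``codimension large enough to be killed inside the $IC$-summand'' is neither what the paper does nor obviously true: in the quasi-split case the basic locus has dimension exactly half that of the Shimura variety, so the relevant correction cycles are top-dimensional in $p\text{-}Isog_k$ and there is no codimension to exploit. Finally, your description of the ordinary step (``Cayley--Hamilton on the standard $F$-isocrystal'') is not the mechanism used in the paper, which instead transports B\"ultel's group-theoretic identity $H_p(g_{[\mu]})=0$ in $\mathscr{H}(M//K_M)$ through the commutative square of Section~4 relating the twisted Satake transform to $\mathbb{Q}[p\text{-}Isog_k^{ord}]$.
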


Their conjecture is a generalization of the celebrated Eichler-Shimura theorem which plays an important role in the development of the arithmetic theory of elliptic curves and modular forms. Also, there are several variants of the above conjecture. 

In\cite{FC}, Chai and Faltings formulated a cycle version of the congruence relation, which will be the main concern of this paper. The precise statement of this version of the conjecture was made by Koskivirta in\cite{K}. Let $Sh_K$ be a Kottwitz PEL-type Shimura variety attached to the Shimura datum $(G,X)$ with level structure $K=K_pK^p$ such that $G$ is nonoramified at $p$ and $K_p$ is hyperspecial. Let $E$ be its reflex field as above. Let $v$ be a prime of $E$ over $p$, $O_{E,(v)}$ its ring of integers localized at $v$ and $k$ its residue field. Let $\mathscr{S}_K$ be its semi-global integral model over $O_{E,(v)}$. This scheme is the moduli space of abelian schemes with certain additional structures. Moonen, following Chai-Faltings, defined a stack $p-Isog$ over $O_{E,(v)}$, classifying pairs $(f:A_1\longrightarrow A_2)$ of $p$-isogenies between two abelian schemes respecting all the additional structures. It has two natural projections to $\mathscr{S}_K$, assigning the pair its source and target. Then Moonen defined two algebras $\mathbb{Q}[p-Isog_E]$ and $\mathbb{Q}[p-Isog_k]$. These algebras can be thought as a geometric realization of the local spherical Hecke algebra $\mathscr{H}(G(\mathbb{Q}_p)//K_p,\mathbb{Q})$. Moonen also defined $p-Isog_k^{ord}$ as the preimage of the $\mu$-ordinary locus of the special fiber of the Shimura variety $\mathscr{S}_k$ under the source projection and an algebra $\mathbb{Q}[p-Isog_k^{ord}]$. The specialization map induces a linear map
\begin{equation*}
\sigma:\mathbb{Q}[p-Isog_E]\longrightarrow\mathbb{Q}[p-Isog_k]
\end{equation*}
There are two maps of schemes (stacks): intersection with the ordinary locus
\begin{equation*}
ord:p-Isog_k\longrightarrow p-Isog_k^{ord}
\end{equation*}
closure map
\begin{equation*}
cl:p-Isog_k^{ord}\longrightarrow p-Isog_k
\end{equation*}
These in turn induce corresponding maps on the algebra of $p$-isogenies. 

Moonen defined a distinguished element $F\in\mathbb{Q}[p-Isog_k]$, called the Frebenius cycle, as the image of a section of the source projection $s$, sending an abelian scheme $A$ to its relative Frobenius $A\longrightarrow A^{(p)}$. Then the Chai-Faltings' version of the conjecture is the following
\begin{conjecture}\label{conj}
Via the map
\begin{equation}
\sigma\circ h:\mathscr{H}(G(\mathbb{Q}_p)//K_p,\mathbb{Q})\longrightarrow\mathbb{Q}[p-Isog_k]
\end{equation}
view the Hecke polynomial $H(X)$ as a polynomial with coefficients in $\mathbb{Q}[p-Isog_k]$. Then the cycle $F$ as defined above, is a root of $H(X)$. 
\end{conjecture}

There is an "ordinary version" of the above conjecture. Via intersecting with the ordinary locus and taking the closure, one gets a map: $\sigma\circ ord:\mathbb{Q}[p-Isog_k]\longrightarrow\mathbb{Q}[p-Isog_k]$. Composing with $\sigma\circ h$, There is another map from the Hecke algebra $\mathscr{H}(G(\mathbb{Q}_p)//K_p,\mathbb{Q})$. Then the ordinary congruence relation reads
\begin{theorem}
Via the map
\begin{equation}
cl\circ ord\circ\sigma\circ h:\mathscr{H}(G(\mathbb{Q}_p)//K_p,\mathbb{Q})\longrightarrow\mathbb{Q}[p-Isog_k]
\end{equation}
view the Hecke polynomial as a polynomial with coefficients in $\mathbb{Q}[p-Isog_k]$. Then the cycle $F$ as defined above, is a root of $H(X)$. 
\end{theorem}

For the more precise definitions of the symbols appearing in the previous paragraph and the conjectures, see \ref{mspi} and \ref{twosections}. This conjecture was proposed implicitly in the last chapter of their book\cite{FC}. The conjecture for the PEL-type Shimura varieties was described by Moonen in\cite{M}, 4.2 in detail and made more explicitly by Koskivirta\cite{K}.

Conjecture \ref{conj} was verified by B\"ultel, Wedhorn, B\"ultel-Wedhorn and Koskivirta \cite{K} for some Shimura varieties. 

Their general strategy is as follows: Starting with the ordinary congruence relation, which is already proved, try to prove that the image of $\mathscr{H}(G(\mathbb{Q}_p)//K_p,\mathbb{Q})$ under $\sigma\circ h$ is contained in $cl\circ ord(\mathbb{Q}[p-Isog_k])$, so the ordinary version of the congruence relation implies the full version of the conjecture. However, this is not always true in general. In the case studied by Koskivirta, there are actually supersingular cycles in the image of $\mathscr{H}(G(\mathbb{Q}_p)//K_p,\mathbb{Q})$ under $\sigma\circ h$. He factored the Hecke polynomial so that a ``special factor" kills these supersingular cycles, thus the full version of the conjecture still holds. 

In the more general Hodge type case, similar objects can be defined, such as $p-Isog$, since Hodge type Shimura varieties can also be interepret as a ``moduli space" of abelian varieties with additional structures. My main purpose in this paper is to confirm the conjecture for GSpin Shimura varieties, generalizing the results by B\"ultel on GSpin$(5,2)$ to all $(n,2)$. 

The ordinary congruence relation is proved by Moonen in the PEL type case and generalized to Hodge type by Hong\cite{Hong} recently. In the cases where GSpin$(n,2)$ splits over $p$ (when $n$ is odd this is always the case, as we assume the group is nonramified over $p$), I show that the ordinary congruence relation is enough, that is, it implies the full version of the congruence relation. In the quasi-split but non split case, which only happens in the case where $n$ is even, I compute the Hecke polynomial and show that there is a factor killing the basic cycles. It is similar to the case dealt with by Koskivirta. 

Let GSpin$(V)$ be the general spinor group associated with a $\mathbb{Q}$-quadratic space $V$. Let $p$ be a prime number greater than $2$ such that GSpin$(V)$ is nonramified over $p$. I prove the following
\begin{theorem}
Let $\mu$ be the Hodge cocharacter of a GSpin Shimura variety. Let $H(X)$ be the Hecke polynomial of $\mu$, considered as a polynomial with coefficients in $\mathbb{Q}[p-Isog_k]$ via $\sigma\circ h$. Let $F$ be the Frobenius cycle as defined in \ref{twosections}. Then $H(X) = 0$.
\end{theorem}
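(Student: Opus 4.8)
The plan is to derive the congruence relation from the congruence relation on the $\mu$-ordinary locus, which is already available, by controlling how much of the spherical Hecke algebra is ``seen'' by that locus. First I would invoke the ordinary congruence relation: by Moonen \cite{M} in the PEL case and Hong \cite{Hong} in general --- where, following B\"ultel, it reduces to a group-theoretic statement about $\mu^{KS}$ and the Frobenius --- one has $ord\bigl(H_{\mu^{KS}}(F)\bigr)=0$ in $\mathbb{Q}[p-Isog_k^{ord}]$. The goal is to upgrade this to $H_{\mu^{KS}}(F)=0$ in $\mathbb{Q}[p-Isog_k]$. The mechanism is the closure map $cl$: if the image of $\sigma\circ h$ lies inside $cl\bigl(ord(\mathbb{Q}[p-Isog_k^{ord}])\bigr)$, then $cl\circ ord$ acts as the identity on that image, and $ord(H_{\mu^{KS}}(F))=0$ forces $H_{\mu^{KS}}(F)=0$. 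The proof then splits according to $G_{\mathbb{Q}_p}$: the split case, which is automatic when $n$ is odd, and the quasi-split but nonsplit case, which is possible only when $n$ is even.

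In the split case I would prove the inclusion $\sigma\circ h\bigl(\mathscr{H}(G(\mathbb{Q}_p)//K_p,\mathbb{Q})\bigr)\subseteq cl\circ ord(\mathbb{Q}[p-Isog_k^{ord}])$, following the template of Wedhorn \cite{W} and B\"ultel--Wedhorn \cite{BW}. Using Kisin's integral model \cite{Ki} for the GSpin Shimura variety, together with the explicit description of the relevant $p$-divisible groups provided by the Kuga--Satake construction and the work of Howard and Pappas on GSpin integral models, one checks that for each generator of $\mathscr{H}(G(\mathbb{Q}_p)//K_p,\mathbb{Q})$ the associated component of $p-Isog_k$ is a self-correspondence, finite flat over $\mathscr{S}_k$, whose preimage of the $\mu$-ordinary locus is dense; consequently each such cycle is the closure of its ordinary restriction and lies in $cl\circ ord(\mathbb{Q}[p-Isog_k^{ord}])$. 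The key geometric input is that the Newton and Ekedahl--Oort stratifications of $\mathscr{S}_k$ are as regular as in the PEL situations, so that no non-ordinary stratum carries an additional component of the Hecke correspondence; granting this, the first step already yields $H_{\mu^{KS}}(F)=0$.

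In the quasi-split but nonsplit case, which occurs only for $n$ even, this inclusion fails, because $\sigma\circ h$ now produces genuine basic (supersingular) cycles --- exactly the situation treated by Koskivirta \cite{K}. Here I would first compute $H_{\mu^{KS}}(X)$ explicitly: since $\mu^{KS}$ is the Kuga--Satake cocharacter, $H_{\mu^{KS}}$ is, up to normalization, the characteristic polynomial of the Hecke--Frobenius on the spin representation of the dual group, and the nonsplit quasi-split inner form forces a factorization $H_{\mu^{KS}}=H_1\cdot H_2$ over $\mathbb{Q}$ into an ``ordinary'' factor $H_1$ and a distinguished ``special'' factor $H_2$. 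The factor $H_1$ is the Hecke polynomial relevant to the ordinary locus, so $H_1(F)\in cl\circ ord(\mathbb{Q}[p-Isog_k^{ord}])$ and is killed there by the ordinary congruence relation for the relevant reductive quotient; it therefore contributes nothing off the ordinary locus. It remains to show that $H_2(F)$ annihilates the basic cycles in $\mathbb{Q}[p-Isog_k]$; for this I would use the explicit structure of the basic locus of a GSpin Shimura variety in this case --- a union of Deligne--Lusztig-type varieties, after Howard--Pappas --- to compute the action of $F$ and of the basic $p$-isogeny cycles directly and to verify that the special factor vanishes on the basic part.

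I expect the main obstacle to be the quasi-split nonsplit case: exhibiting the rational factorization $H_{\mu^{KS}}=H_1\cdot H_2$ requires identifying the twisted structure of the dual group and the action of Frobenius on the spin representation precisely, and --- more seriously --- verifying that $H_2(F)$ kills the basic cycles demands a hands-on analysis of the basic Newton stratum and of the Frobenius cycle restricted to it. By contrast, the split case should follow the Moonen--Wedhorn strategy once the GSpin integral models and their stratifications are in place.
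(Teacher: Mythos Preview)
Your overall architecture matches the paper's --- start from the ordinary congruence relation, then split into the split and quasi-split nonsplit cases --- but the mechanisms you propose in each case diverge from what actually works, and in the split case your argument would fail as stated.

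In the split case you assert that each Hecke correspondence is ``finite flat over $\mathscr{S}_k$'' so that the ordinary locus is automatically dense in it. This is false: the source projection $s:p\text{-}Isog_k\to\mathscr{S}_k$ is \emph{not} finite over the basic locus (its fibers there have positive dimension once $n\ge 4$). The paper's argument is instead a dimension count. One embeds the fiber of $s$ over a point of Newton type $[b]$ into the reduced Rapoport--Zink space $RZ^{red}_b$ (Proposition~\ref{inject}), then computes $\dim RZ^{red}_b$ via Rapoport's formula: for finite-height $[b]$ it is $0$, so $s$ is finite there and no $d$-dimensional component can be finite height; for basic $[b]$ in the split case one has $\dim RZ^{red}_b+\dim\mathscr{S}_{k,b}<d$, so no $d$-dimensional component can be basic either. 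Hence every $d$-dimensional component of $\sigma(H_p(X))$ is ordinary, and the ordinary congruence suffices. Your proposal is missing this RZ-dimension argument, which is the whole content of the split case.

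In the quasi-split nonsplit case your shape is right but the logic is garbled. First, the representation attached to $\mu^{KS}=e_1^\vee$ is the $2n$-dimensional standard representation of $\mathrm{GSO}_{2n}$, not a spin representation; the explicit factorization is $H_{\mu^{KS}}(X)=(X^2-p^{2(n-1)}\langle p\rangle)\cdot R(X)$, the quadratic factor coming from the two $\sigma$-interchanged weights $e_n^\vee,\ e_0^\vee-e_n^\vee$. Second, you should not argue that ``$H_1(F)$ is killed by the ordinary congruence for the relevant reductive quotient.'' The correct logic is: the ordinary congruence gives $H'_{\mu^{KS}}(F):=cl\circ ord(H_{\mu^{KS}})(F)=0$, so $H_{\mu^{KS}}(F)=H_{\mu^{KS}}(F)-H'_{\mu^{KS}}(F)=(F^2-p^{2(n-1)}\langle p\rangle)\cdot(R-R')(F)$; by the dimension argument above, $(R-R')(F)$ is a $\mathbb{Q}$-combination of basic components of dimension $d=2n-2$. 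It then remains to show $C\cdot(F^2-p^{2(n-1)}\langle p\rangle)=0$ for every such $C$. This requires two separate comparisons: (i) $C\cdot F^2$ and $C\cdot\langle p\rangle$ have the same support, proved by lifting to $RZ^{red}$ via the uniformization map and using Howard--Pappas's description of irreducible components $RZ^{red,(l)}_\Lambda$ indexed by vertex lattices, to see that both $\times p$ and $Fr^2$ send $RZ^{red,(l)}_\Lambda$ to $RZ^{red,(l+2)}_\Lambda$; and (ii) the multiplicities differ by exactly $p^{2(n-1)}$, because $\mathscr{F}^2$ on the $(n-1)$-dimensional basic locus has degree $p^{2(n-1)}$ while $\langle p\rangle$ has degree $1$. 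Your proposal gestures at (i) but does not isolate the multiplicity step (ii), and it is precisely the equality $\dim\mathscr{S}_{k,b}=n-1$ in the nonsplit case that makes the coefficient $p^{2(n-1)}$ come out right.
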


The structure of the paper is the following: In section\ref{HkpB} I review the definition of the Hecke polynomial and B\"ultel's work \cite{B} on the ``group theoretic congruence relation". In section\ref{Stconj} I introduce the background needed to state conjecture \ref{conj} precisely for Hodge type Shimura varieties. In section\ref{ordcon} I review the ordinary congruence relation by Hong\cite{Hong}. In section\ref{shim} I review the definition of GSpin Shimura varieties. In section\ref{split} I prove the conjecture in the split cases; Finally in section\ref{quasi}, I compute the Hecke polynomial, showing that one can factor it in the same way as Koskivirta\cite{K} did so that a particular simple factor kills all possible basic cycles, and deduce the conjecture in the quasi-split non split case. 

\section{Hecke polynomial and B\"ultel's group theoretic congruence relation}\label{HkpB}
In this section I review the definition of the Hecke polynomial and a result proved by B\"ultel in his thesis. The references are Blasius-Rogawski\cite{BR} and Wedhorn\cite{W}. 

\vspace{.1in}
\subsection{The Hecke algebras} Fix an odd prime $p$. Let $F$ be a finite extension of $\mathbb{Q}_p$, $\varpi$ its uniformizer and $O_F$ be its ring of integers. Fix an algebraic closure $\overline{F}$ of $F$ and let $Gal(\overline{F}/F)$ be its Galois group. Let $F^{nr}$ be the maximal nonramified extension of $F$ inside $\overline{F}$. Let $\sigma$ be the Frobenius of $F$ as an element of $Gal(F^{nr}/F)$. Also, let $q$ be the cardinality of the residue field of $F$. 

Let $G$ be a nonramified reductive group over $F$, i.e. quasi-split over $F$ and splits over a nonramified extension of $F$. Let $K$ be a hyperspecial subgroup of $G(F)$. According to Bruhat-Tits\cite{BT}, take an $S$ which is a maximal split torus of $G$ defined over $F$, then $T = C_G(S)$ is a maximal torus of $G$ defined over $F$. Let $B$ be a Borel subgroup containing $T$ defined over $F$ and $U$ its unipotent radical. Let $W=N_G(T)/T$ be the Weyl group of $(G,T)$. It is an \'etale group scheme over $F$ and $W(\overline{F})$ is the Weyl group in the naive sense. The Borel $B$ determines a set of positive roots and coroots of $(G_{\overline{F}}, T_{\overline{F}})$. Let $\rho$ be the halfsum of all the positive roots. Let $K_T=K\cap T(F)$. 

I also need another parabolic pair $(M,P)$ such that $P$ is a parabolic subgroup defined over $F$ containing $B$, $M$ its Levi subgroup. Denote its unipotent radical by $N$ so that $P=M\ltimes N$. Let $K_P=K\cap P(F)$ and $K_N=K\cap N(F)$. Let $K_M$ be the image of $K$ under the projection:$P\rightarrow M$. The group $B\cap M$ is a Borel subgroup of $M$ and we use $U_M$ to denote the unipotent radical of this Borel of $M$. Use $dg$ to denote the left invariant Haar measure on $G$ normalized so that $K$ has volume one. Similarly define $dp, dn, dm$.

Define the spherical Hecke algebra with coefficients in a ring $A$
\begin{equation}
\mathscr{H}(G(F)//K,A), \mathscr{H}(M(F)//K_M,A),\mathscr{H}(T(F)//K_T,A)
\end{equation}
for $G, M, T$ respectively. For example, $\mathscr{H}(G(F)//K,A)$ as a set is defined to be all the finitely supported $A$-valued functions on the double cosets $K\backslash G(F)/K$, the group structure is just addition of functions and the ring structure is the convolution product. 

\subsection{The (twisted) Satake transform.} The twisted Satake transform for $G$ is defined to be
\begin{align}
\dot{\mathcal{S}}^G_T: \mathscr{H}(G(F)//K,A)&\longrightarrow \mathscr{H}(T(F)//K_T,A)\\
f &\longmapsto (t \mapsto \int_{U(F)}f(tu)du)
\end{align}
Similarly one can define this map for $M$
\begin{align}
\dot{\mathcal{S}}^M_T: \mathscr{H}(M(F)//K_M,A)&\longrightarrow \mathscr{H}(T(F)//K_T,A)\\
f &\longmapsto (t \mapsto \int_{U_M(F)}f(tu)du)
\end{align}
and
\begin{align}
\dot{\mathcal{S}}^G_M: \mathscr{H}(G(F)//K,A)&\longrightarrow \mathscr{H}(M(F)//K_M,A)\\
f &\longmapsto (m\mapsto \int_{N(F)}f(mn)dn)
\end{align}
We have the relation: $\dot{\mathcal{S}}^G_T=\dot{\mathcal{S}}_T^M\circ\dot{\mathcal{S}}^G_M$.

The Hecke algebra $\mathscr{H}(G(F)//K, A)$ is a polynomial algebra. Since $G$ is quasi-split over $F$, from Cartan decomposition, there is the following identification
\begin{align}
X_*(S)&\longrightarrow T(F)/K_T\\
h:v&\longmapsto 1_{K_Tv(\varpi)K_T}
\end{align}
in which $\varpi$ is the uniformizer of $F$ as defined above. 

Since one is using the twisted Satake transform rather than the usual Satake transform, Wedhorn\cite{W} defined a ``dot action" of $W(F)$ on $A[X_*(S)]$
\begin{equation}
w\cdot h_v = q^{\langle\rho, v-w(v)\rangle}h_{w(v)}
\end{equation}
where $h_v$ is short for $h(v)$. Under the twisted Satake transform, the Hecke algebra $\mathscr{H}(G(F)//K,A)$ is identified with the $W_M(F)$ invariant subalgebra. Therefore $A[X_*(S)]^{W_M(F)}$ can be viewed as a ring extension of $A[X_*(S)]^{W(F)}$ inside $A[X_*(S)]$. In\cite{B}, B\"ultel defined a distinguished element of $A[X_*(S)]^{W_M(F)}$
\begin{equation}\label{gFr}
g_{[\mu]}=1_{K_M\mu(\varpi)K_M}
\end{equation}
which will play the role of the Frobenius in the ordinary congruence relation. In this work, I will only consider the $\mathbb{Q}$-coefficients spherical Hecke algebra, and will use $\mathscr{H}(G(F)//K)$ for $\mathscr{H}(G(F)//K,\mathbb{Q})$. 

\subsection{The Hecke polynomials and the group theoretic congruence relation.} Let $\hat{G}$ be the dual group of $G_{\overline{F}}$. Fix a pinning of the root datum of $(G,B,T)$ in which $(G,B,T)$ are those defined in the previous section. When $G$ is quasi-split, define $^{L}G$ to be $Gal(F^{nr}/F)\ltimes\hat{G}$ since this paper only cares about the nonramified Langlands parameters. 

Let $l$ be a prime number different from $p$, the residue characteristic of $F$. Let $\overline{\mathbb{Q}}_l$ be the algebraic closure of the field $\mathbb{Q}_l$. Define the set of nonramified Langlands parameters $\Phi_{nr}(G)$ to be the set of $\hat{G}(\overline{\mathbb{Q}}_l)$ conjugacy classes of the homomorphisms $\Gamma_F=Gal(F^{nr}/F)\longrightarrow{^{L}G}(\overline{\mathbb{Q}}_l)$ such that $\phi(\sigma)=(\sigma,g_{\phi})$ in which $g_{\phi}$ is a semisimple element of $\hat{G}(\overline{\mathbb{Q}}_l)$. The set $\Phi_{nr}(G)$ can be indentified with the $\sigma$-conjugacy classes of semisimple elements of $\hat{G}(\overline{\mathbb{Q}}_l)$.

Hecke polynomial is defined for any cocharacter of $G_{\overline{F}}$, not only for miniscule ones. Consider a tower of field extension $F\subset E\subset F^{nr}$ and let $n = [E:F]$. Also let $\Gamma_E=Gal(F^{nr}/E)$. Let $\mu_E$ be an $E$-rational cocharacter of $G_E$, this in turn gives a highest weight module of $\hat{G}$, denote it by $V^{\mu_E}$. Similarly to above, define ${^{L}G}_E$ to be $\Gamma_E\ltimes\hat{G}$ where the action of $\Gamma_E$ on $\hat{G}$ is the restriction of $\Gamma_F$. Then 
\begin{proposition}
$r_{\mu_E}$ can be extended to a representation of ${^{L}G}_E$. In other words, there is a representation
\begin{equation}
r_{\mu_E}:{^{L}G}_E\longrightarrow GL(V^{\mu_E})
\end{equation}
such that it gets back the highest weight module of $\hat{G}$ with highest weight $\mu_E$ when restricted $r_{\mu_E}$ to $1\ltimes\hat{G}$. 
\end{proposition}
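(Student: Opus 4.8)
The plan is to construct the extension by hand, using the fixed pinning, after reducing to a finite Galois group: the a priori datum is only a projective representation of the Galois side, and the pinning is precisely what is needed to rigidify it into an honest one. First I would reduce to a finite quotient. Since $G_E$ is unramified it splits over a finite unramified extension $L/E$ inside $F^{nr}$; set $\Theta=Gal(L/E)$, a finite (cyclic) group through which the action of $\Gamma_E$ on the based root datum of $\hat G$ — equivalently, on $(\hat G,\hat B,\hat T)$ together with the chosen pinning — factors. It therefore suffices to produce a homomorphism $r_{\mu_E}\colon \Theta\ltimes\hat G\longrightarrow GL(V^{\mu_E})$ whose restriction to $\{1\}\ltimes\hat G$ is the given highest weight representation $\rho$ of $\hat G$, and then inflate along $\Gamma_E\twoheadrightarrow\Theta$.

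Next, for each $\theta\in\Theta$ consider the twisted representation $\rho^{\theta}(g):=\rho(\theta^{-1}(g))$ of $\hat G$. Since $\theta$ preserves $\hat B$ and $\hat T$, $\rho^{\theta}$ is again irreducible with a highest weight line, and its highest weight is $\theta\cdot\mu_E$; because $\mu_E$ is $E$-rational it is fixed by $\Theta$, so $\theta\cdot\mu_E=\mu_E$ and hence $\rho^{\theta}\cong\rho$. Thus there exists $A_{\theta}\in GL(V^{\mu_E})$ with $A_{\theta}\,\rho(g)\,A_{\theta}^{-1}=\rho(\theta(g))$ for all $g\in\hat G$, and by Schur's lemma $A_{\theta}$ is unique up to a scalar. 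To fix the scalar, choose a nonzero vector $v_0$ spanning the one-dimensional weight space $V^{\mu_E}_{\mu_E}$; since $\theta$ fixes the weight $\mu_E$ and preserves $\hat B,\hat T$, the operator $A_{\theta}$ stabilizes the line $\overline{\mathbb{Q}}_{l}\,v_0$, so we may and do normalize $A_{\theta}$ by the condition $A_{\theta}v_0=v_0$. This makes $A_{\theta}$ uniquely determined.

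With this normalization $\theta\mapsto A_{\theta}$ is a genuine homomorphism: for $\theta_1,\theta_2\in\Theta$, both $A_{\theta_1}A_{\theta_2}$ and $A_{\theta_1\theta_2}$ conjugate $\rho(g)$ to $\rho(\theta_1\theta_2(g))$ for every $g\in\hat G$ and both fix $v_0$, so they coincide by the uniqueness just established. One then sets $r_{\mu_E}(\theta,g)=A_{\theta}\,\rho(g)$; the intertwining relation $A_{\theta}\rho(g)A_{\theta}^{-1}=\rho(\theta(g))$ shows (a short, routine verification with the chosen convention for the semidirect product $\Theta\ltimes\hat G$) that this is a group homomorphism, and by construction its restriction to $\{1\}\ltimes\hat G$ recovers $\rho$, the highest weight module with highest weight $\mu_E$. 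Inflating along $\Gamma_E\twoheadrightarrow\Theta$ yields the desired $r_{\mu_E}\colon {}^{L}G_E\longrightarrow GL(V^{\mu_E})$.

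The only real obstacle is the step from ``$A_{\theta}$ exists and is unique up to scalar'' to ``the $A_{\theta}$ can be chosen compatibly'': a priori the cochain $(\theta_1,\theta_2)\mapsto A_{\theta_1}A_{\theta_2}A_{\theta_1\theta_2}^{-1}$ is a scalar $2$-cocycle defining a class in $H^2(\Theta,\overline{\mathbb{Q}}_{l}^{\times})$, and without further input one obtains only a projective representation. The resolution is exactly the normalization above: the fixed pinning singles out a canonical $\Theta$-stable highest weight line on which each $A_{\theta}$ can be rigidified, trivializing that class; this is why the statement is phrased relative to a chosen pinning of $(G,B,T)$. Everything else — irreducibility and one-dimensionality of the top weight space, compatibility of the $\Theta$-action on weights with $\hat B$-dominance, and the final homomorphism check — is standard.
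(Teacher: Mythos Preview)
Your proof is correct and takes a genuinely different route from the paper's. The paper proceeds abstractly: it observes that the obstruction to lifting the projective action of the (finite cyclic) Galois quotient $\Theta$ to an honest one is a class in $H^2(\Theta,\overline{\mathbb{Q}}_l^{\times})$, and then kills this class using that for a cyclic group of order $n$ one has $H^2(\Theta,K^{\times})\cong K^{\times}/(K^{\times})^n=1$ when $K$ is algebraically closed. You instead trivialize the cocycle explicitly: since each $\theta$ fixes the highest weight $\mu_E$ and preserves $(\hat B,\hat T)$, every intertwiner $A_\theta$ stabilizes the highest weight line, so imposing $A_\theta v_0=v_0$ pins down $A_\theta$ uniquely and forces $\theta\mapsto A_\theta$ to be a genuine homomorphism. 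Your argument is the one found in Borel's Corvallis article and has two advantages: it produces a \emph{canonical} extension (depending only on the chosen $(\hat B,\hat T)$ and $v_0$), and it does not use that $\Theta$ is cyclic, so it would go through for a general finite Galois quotient. The paper's cohomological argument is shorter but gives only existence and genuinely needs cyclicity (or at least some computation of $H^2$). One minor remark: your closing comment that ``this is why the statement is phrased relative to a chosen pinning'' slightly overstates things --- your normalization uses only the highest weight line (determined already by $\hat B,\hat T$) and a chosen vector in it, not the full pinning data of simple root vectors.
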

This is stated by Wedhorn\cite{W}, 2.5. Actually this is the corollary of the following well-known proposition
\begin{proposition}
Let $G$ be a reductive group over an algebraically closed field $K$ and $r:G\longrightarrow GL(V)$ be an irreducible algebraic representation. Let $\Lambda$ be a finite cyclic group of order $n$. Suppose we have a semidirect product $\Gamma\ltimes G$ such that for any $\gamma\in\Gamma$, we have $r\circ\gamma\simeq r$ as a representation of $G$. Then we can extend the representation $r$ to $\Gamma\ltimes G$
\end{proposition}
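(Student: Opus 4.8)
The plan is to reduce everything to a single intertwining operator attached to a chosen generator of $\Gamma$, and then to correct that operator by a scalar, the only place where algebraic closedness of $K$ is used. Fix a generator $\gamma_{0}$ of $\Gamma$, so that every element of $\Gamma \ltimes G$ is uniquely of the form $\gamma_{0}^{k} g$ with $0 \le k < n$ and $g \in G$, with multiplication governed by $\gamma_{0}\, g\, \gamma_{0}^{-1} = \gamma_{0}(g)$, where $\gamma_{0}(\cdot)$ denotes the given action of $\gamma_0$ on $G$. By hypothesis $r \circ \gamma_{0} \cong r$, so there is $A \in GL(V)$ with $A\, r(g)\, A^{-1} = r(\gamma_{0}(g))$ for all $g \in G$; moreover such an $A$ is unique up to a scalar, since $r$ is irreducible and $V$ is finite dimensional over the algebraically closed field $K$, so Schur's lemma gives $\mathrm{End}_{G}(V) = K$.

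Next I would analyze $A^{n}$. Iterating the intertwining relation gives $A^{n}\, r(g)\, A^{-n} = r(\gamma_{0}^{n}(g)) = r(g)$, because $\gamma_{0}^{n} = e$ acts on $G$ as the identity automorphism; hence $A^{n} \in \mathrm{End}_{G}(V)$, and Schur's lemma forces $A^{n} = \lambda \cdot \mathrm{id}_{V}$ for some $\lambda \in K^{\times}$. Since $K$ is algebraically closed I can choose $\mu \in K^{\times}$ with $\mu^{n} = \lambda$ and replace $A$ by $B := \mu^{-1} A$; then $B$ still satisfies $B\, r(g)\, B^{-1} = r(\gamma_{0}(g))$ and in addition $B^{n} = \mathrm{id}_{V}$.

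I would then define $\tilde r \colon \Gamma \ltimes G \to GL(V)$ by $\tilde r(\gamma_{0}^{k} g) = B^{k}\, r(g)$. This is well defined because $B^{n} = \mathrm{id}_{V}$ matches $\gamma_{0}^{n} = e$, and it is a morphism of algebraic groups since $\Gamma \ltimes G$ is a disjoint union of $n$ copies of $G$ and on the $k$-th component $\tilde r$ is the algebraic map $g \mapsto B^{k} r(g)$. For the homomorphism property, expanding the semidirect product multiplication gives $(\gamma_{0}^{i} g_{1})(\gamma_{0}^{j} g_{2}) = \gamma_{0}^{i+j}\, \gamma_{0}^{-j}(g_{1})\, g_{2}$, and the key identity $B^{j}\, r(\gamma_{0}^{-j}(g_{1}))\, B^{-j} = r(g_{1})$ is exactly the $j$-fold iterate of the intertwining relation for $B$; feeding this in yields $\tilde r\big((\gamma_{0}^{i} g_{1})(\gamma_{0}^{j} g_{2})\big) = B^{i} r(g_{1}) B^{j} r(g_{2}) = \tilde r(\gamma_{0}^{i} g_{1})\, \tilde r(\gamma_{0}^{j} g_{2})$. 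Restricting $\tilde r$ to $\{e\} \ltimes G$ recovers $r$, which is the assertion.

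The one genuinely delicate point is the scalar correction. Conceptually, choosing intertwiners $A_{\gamma}$ for all $\gamma$ produces a $2$-cocycle on $\Gamma$ with values in $K^{\times}$ (trivial action) measuring the failure of $\gamma \mapsto A_{\gamma}$ to be multiplicative, and the obstruction to extending $r$ is its class in $H^{2}(\Gamma, K^{\times})$; for $\Gamma$ cyclic of order $n$ this group is $K^{\times}/(K^{\times})^{n}$, which vanishes precisely because $K$ is algebraically closed — exactly what licenses the choice of $\mu$ above. Everything else is formal once one has irreducibility (for Schur's lemma) and cyclicity of $\Gamma$ (so that a single generator $\gamma_0$ and a single operator $B$ control the whole extension).
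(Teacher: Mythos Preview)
Your proof is correct and follows essentially the same approach as the paper: both identify the obstruction to extending $r$ as a class in $H^{2}(\Gamma, K^{\times}) \cong K^{\times}/(K^{\times})^{n}$, which vanishes because $K$ is algebraically closed. The only difference is presentational---you work explicitly with a single generator $\gamma_{0}$ and correct $A^{n}=\lambda$ by an $n$-th root, whereas the paper phrases the same correction in the language of rescaling a full family $\{f_{\gamma}\}$ to trivialize a $2$-cocycle---and your final paragraph makes this equivalence explicit.
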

\begin{proof}
First, for any $\gamma\in\Gamma$, from $r\circ\gamma\simeq r$, we know that there exists an endomorphism of $V,f_\gamma:V\longrightarrow V$ such that $f_{\gamma}\circ r(g) = r(g)$ for any $g\in G$. However, for any two elements $\gamma_1,\gamma_2$ of $\Gamma$, $f_{\gamma_1\circ\gamma_2}$ is different from $f_{\gamma_1}\circ f_{\gamma_2}$ by a non zero constant $c_{\gamma_1,\gamma_2}$,since irreducible representations over an algebraically closed field has automorphisms defined by scalars. Consider three elements of $\Gamma,\gamma_i$ in which $i = 1,2,3$. Then there is a relation of the constants: $c_{\gamma_1\circ\gamma_2,\gamma_3}\cdot c_{\gamma_1\circ\gamma_3}=c_{\gamma_1,\gamma_2\circ\gamma_3}\cdot c_{\gamma_2\circ\gamma_3}$. This is just a cocycle in the cohomology group $H^2(\Gamma,K^*)$, in which we view $K^*$ as a trivial $\Gamma$ module. but from basic knowledge of group cohomology for finite cyclic groups, we know that this cohomology is nothing but $K^*/(K^*)^n$, which is $1$ since $K$ is algebraically closed. The vanishing of this cohomology groups implies that we rescale $f_{\gamma}$s so that $f_{\gamma_1\circ\gamma_2} = f_{\gamma_1}\circ f_{\gamma_2}$ holds, therefore extending the representation to all of $\Gamma\ltimes G$. 
\end{proof}

Now for any $\overline{\mathbb{Q}}_l$-algebra $R$, and $g\in\hat{G}(R)$, consider the Hecke polynomial
\begin{equation}
H_{G,\mu_E}(X) = det(X\cdot Id - q^{nd}r_{\mu_E}((\sigma\ltimes g)^n))
\end{equation}
in which $d=\langle\mu_E,\rho\rangle$. One can view this polynomial as a polynomial with coefficients in $\mathcal{O}(\hat{G})$. By this definition, $\mathcal{O}(\hat{G})$ is an algebra over $\overline{\mathbb{Q}}_l$. By Wedhorn\cite{W} proposition (2.7), it is actually a ploynomial with coefficients in $\mathcal{O}(\hat{G})$ where $\hat{G}$ is defined over $\mathbb{Q}$. 

Let $\mathcal{O}(\hat{G})^\sigma$ be the elements of $\mathcal{O}(\hat{G})$ which are invariant under the $\sigma$-twisted conjugation. More precisely, the twisted conjugation
\begin{align*}
\hat{G}\times\hat{G}&\xrightarrow{Ad\sigma}\hat{G}\\
(g,x)&\longmapsto gx
\end{align*}
induces a map on the rings
\begin{align*}
\mathcal{O}(\hat{G})&\xrightarrow{Ad\sigma^\lor}\mathcal{O}(\hat{G})\otimes\mathcal{O}(\hat{G})
\end{align*}
The invariant elements are those $f\in\mathcal{O}(\hat{G})$ such that $Ad\sigma^\lor(f)=1\otimes f$.
Similar let $\mathcal{O}(\hat{T})^\sigma$ be the $T$ counterpart. Then
\begin{proposition}
$H_{G,\mu_E}(X)$ has coefficients in $\mathcal{O}(\hat{G})^\sigma$, and there are isomorphisms: $\mathcal{O}(\hat{G})^\sigma\simeq \mathcal{O}(\hat{T})^\sigma\simeq \mathcal{O}(\hat{S})^{W(F)}$ in which the isomorphisms are given by restrictions. Therefore, we can view the Hecke polynomial as a polynomial with coefficients in $\mathcal{O}(\hat{S})^{W(F)}$. 
\end{proposition}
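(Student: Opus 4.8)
The plan is to treat the two assertions of the proposition separately: first that the coefficients of $H_{G,\mu}(X)$ already lie in $O(\hat{G})^{\sigma}$, and then the chain of isomorphisms $O(\hat{G})^{\sigma}\simeq O(\hat{T})^{\sigma}\simeq O(\hat{S})^{W(F)}$.

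For the first assertion I would argue directly from the construction of \eqref{Hkpoly}. Writing $(\sigma\ltimes g)^{n}=\sigma^{n}\ltimes N(g)$ with $N(g)=g\,\sigma(g)\cdots\sigma^{n-1}(g)$, one checks that replacing $g$ by its $\sigma$-twisted conjugate $x\,g\,\sigma(x)^{-1}$ (for $x\in\hat{G}$) amounts to conjugating $\sigma\ltimes g$ by $1\ltimes x$ inside ${}^{L}G$, hence conjugating $(\sigma\ltimes g)^{n}$ by $1\ltimes x$ inside ${}^{L}G_{E}$; a one-line telescoping identity $N(x\,g\,\sigma(x)^{-1})=x\,N(g)\,\sigma^{n}(x)^{-1}$ makes this explicit. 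Since $r_{\mu_{E}}$ is a homomorphism, $r_{\mu_{E}}\big((\sigma\ltimes g)^{n}\big)$ is thereby conjugated inside $GL(V^{\mu_{E}})$ by the fixed invertible matrix $r_{\mu_{E}}(1\ltimes x)$, conjugate operators have equal characteristic polynomials, and the central scalar $q^{nd}$ is inert. Hence every coefficient of $H_{G,\mu}(X)$, viewed as a regular function of $g\in\hat{G}$, is invariant under $\sigma$-twisted conjugation, i.e. lies in $O(\hat{G})^{\sigma}$.

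For the isomorphism $O(\hat{G})^{\sigma}\simeq O(\hat{T})^{\sigma}$ I would first remove the twist: form the (possibly disconnected) group $\hat{G}\rtimes\langle\sigma\rangle$ with the pinned Galois action, and observe that $\hat{G}$-conjugation on the coset $\hat{G}\sigma$ is precisely $\sigma$-twisted conjugation on $\hat{G}$ under $g\leftrightarrow g\sigma$, and likewise for $\hat{T}\sigma\subset\hat{T}\rtimes\langle\sigma\rangle$. The claim then becomes the Chevalley restriction theorem for the quasi-semisimple automorphism $\sigma$ (Steinberg): because $G$ is quasi-split, $\sigma$ preserves a pinning, so $(\hat{B},\hat{T})$ may be taken $\sigma$-stable; every semisimple element of $\hat{G}\sigma$ is then $\hat{G}$-conjugate into $\hat{T}\sigma$, the regular semisimple locus of $\hat{G}\sigma$ is open dense and meets $\hat{T}\sigma$, and two elements of $\hat{T}\sigma$ are $\hat{G}$-conjugate if and only if they are conjugate under $N_{\hat{G}}(\hat{T})$. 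Injectivity of restriction then follows from density of $\bigcup_{x}x(\hat{T}\sigma)x^{-1}$ in $\hat{G}\sigma$; surjectivity follows from the orbit description, which identifies the image with the functions on $\hat{T}$ invariant under $N_{\hat{G}}(\hat{T})$-twisted conjugation, i.e. with $O(\hat{T})^{\sigma}$. Finally I would unwind that action: the action of $\hat{T}$ on itself by $\sigma$-twisted conjugation is $t\mapsto t\cdot x\sigma(x)^{-1}$, translation by the subtorus $(1-\sigma)\hat{T}$, so the invariants are the functions pulled back along $\hat{T}\twoheadrightarrow\hat{T}/(1-\sigma)\hat{T}$, a torus whose character lattice is $X_{\ast}(T)^{\sigma}=X_{\ast}(S)$, hence equal to $\hat{S}$; and for $w\in W$ fixed by $\sigma$ the twisted conjugation $t\mapsto w\,t\,\sigma(w)^{-1}=w\,t\,w^{-1}$ is the ordinary conjugation, so the residual symmetry is the ordinary action of $W^{\sigma}$, which for quasi-split $G$ is the relative Weyl group $W(F)$. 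Thus $O(\hat{T})^{\sigma}=O(\hat{S})^{W(F)}$, realized by restriction (descent along $\hat{T}\to\hat{S}$), and composing with the previous step places the coefficients of $H_{G,\mu}(X)$ in $O(\hat{S})^{W(F)}$.

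I expect the main obstacle to be the surjectivity half of the twisted Chevalley restriction: the input that every semisimple element of $\hat{G}$ is $\sigma$-conjugate into $\hat{T}$ and that the residual symmetry on $\hat{T}$ is no larger than $N_{\hat{G}}(\hat{T})$-twisted conjugation. This is exactly where Steinberg's structure theory of quasi-semisimple automorphisms is invoked, and where the quasi-split hypothesis — existence of a $\sigma$-stable pinning and the equality $W^{\sigma}=W(F)$ — is indispensable; in the non-quasi-split range (which does not arise here, since $G$ is assumed unramified) the second isomorphism would genuinely fail. Everything else is bookkeeping with semidirect products, norm maps, and character lattices.
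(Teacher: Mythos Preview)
Your argument is correct and in fact supplies considerably more detail than the paper does: in the paper this proposition is asserted without proof, as a consequence of the construction and of the standard references (Blasius--Rogawski, B\"ultel, Wedhorn). Your first paragraph, reducing the $\sigma$-twisted-conjugation invariance to ordinary conjugation invariance of the characteristic polynomial via $(1\ltimes x)(\sigma\ltimes g)(1\ltimes x)^{-1}=\sigma\ltimes xg\sigma(x)^{-1}$, is exactly the intended mechanism; your second paragraph, invoking the twisted Chevalley restriction for the quasi-semisimple automorphism $\sigma$ (Steinberg) and then identifying $\hat T/(1-\sigma)\hat T$ with $\hat S$ and the residual symmetry with $W^{\sigma}=W(F)$, is the standard route and is precisely what underlies the references the paper cites.

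One small point worth tightening: in the last step you write ``for $w\in W$ fixed by $\sigma$ the twisted conjugation $t\mapsto wt\sigma(w)^{-1}=wtw^{-1}$''. This is slightly glib, since $w$ lives in $W$ and the twisted conjugation is defined via a lift $n\in N_{\hat G}(\hat T)$. What is actually true (and what you need) is that the stabilizer of the coset $\hat T\sigma$ under $\hat G$-conjugation consists of those $n\in N_{\hat G}(\hat T)$ whose image in $W$ lies in $W^{\sigma}$, and that for such $n$ the induced action on $\hat T/(1-\sigma)\hat T=\hat S$ is independent of the lift and agrees with the natural $W^{\sigma}$-action on $X_*(S)=X_*(T)^{\sigma}$. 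This is routine to verify and does not affect the validity of your argument.
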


Now it is the time to state the ``group theoretic congruence relation" proved by B\"ultel (see B\"ultel\cite{B}, proposition (3.4), or Wedhorn\cite{W} (2.9))
\begin{proposition}
As an element of $\mathbb{Q}[X_*(S)]=\mathcal{O}(\hat{S})$, the $g[\mu]$ defined in \ref{gFr}, is a root of the Hecke polynomial, viewed as an element of $\mathcal{O}(\hat{S})^{W(F)}[X]$. 
\end{proposition}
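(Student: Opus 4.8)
The plan is to prove the statement by pulling everything down to the maximal split torus and computing there, using the explicit formula for the Hecke polynomial in \eqref{Hkpoly}. First I would unwind the definition: by the last proposition, the Hecke polynomial $H_{G,\mu_E}(X)$ is the image, under $N^*$, of the characteristic polynomial $\det(X\cdot\id - q^{nd}(a_{ij}))$ of the matrix coefficients of $r_{\mu_E}$, and its coefficients live in $O(\hat S)^{W(F)}$ after restriction. So it suffices to work inside $O(\hat S) = \Q[X_\ast(S)]$ and to show that substituting $X = g_{[\mu]}$ (thought of as a function on $\hat S$) makes the polynomial vanish identically. Since $O(\hat S)$ is a domain, I can check the vanishing after passing to the fraction field, i.e. at the generic point / after base change to $\overline{\Q}_l$, where $r_{\mu_E}$ restricted to $\hat T$ (equivalently $\hat S$) is diagonalizable with eigenvalues given by the weights of $V^{\mu_E}$.

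The key computation is then the following. Restricting $r_{\mu_E}$ to $\hat S$, the eigenvalues of the matrix $(a_{ij})$ are the characters $\lambda\in X^\ast(\hat S) = X_\ast(S)$ occurring as weights of $V^{\mu_E}$, counted with multiplicity, and the geometric Frobenius twist built into $N^*$ replaces a weight $\lambda$ by its Frobenius-norm $\lambda + \sigma\lambda + \cdots + \sigma^{n-1}\lambda$ (using that $\sigma$ acts on $X_\ast(S)$ through $\Gamma_E$-orbits, with $n=[E:F]$). Thus over the fraction field
\begin{equation}
H_{G,\mu_E}(X) = \prod_{\lambda}\bigl(X - q^{nd}\, h_{\lambda+\sigma\lambda+\cdots+\sigma^{n-1}\lambda}\bigr),
\end{equation}
the product over the weights $\lambda$ of $V^{\mu_E}$. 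Now $\mu_E$ (hence $\mu$) is the highest weight, so $\mu$ itself is one of the $\lambda$'s, and its norm is $\mu+\sigma\mu+\cdots+\sigma^{n-1}\mu$. I would then match this against the definition $g_{[\mu]}=1_{K_M\mu(\varpi)K_M}$: under the identification of $\mathscr H(M(F)//K_M)$ with $\Q[X_\ast(S)]^{W_M(F)}$ via the twisted Satake transform, $g_{[\mu]}$ is, up to the $q$-power normalization coming from $\rho$ in the dot-action \eqref{frob}, precisely the element $q^{nd}h_{\nu}$ for $\nu$ the appropriate Frobenius-norm of $\mu$ (this is exactly the bookkeeping that makes the twisted Satake transform, rather than the usual one, the right normalization — the factor $q^{\langle\rho,\cdot\rangle}$ and the factor $q^{nd}$ with $d=\langle\mu_E,\rho\rangle$ are designed to cancel). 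Hence $g_{[\mu]}$ equals the factor of $H_{G,\mu_E}(X)$ indexed by $\lambda=\mu$, so it is a root.

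The main obstacle — and the step where care is genuinely required rather than routine — is matching the normalizations: one must check that the $q^{nd}$ with $d=\langle\mu_E,\rho\rangle$ in the Hecke polynomial, the twist by $\sigma$ in the norm map $N^\ast$, and the dot-action twist $q^{\langle\rho,\nu-w(\nu)\rangle}$ defining $g_{[\mu]}$ all conspire so that the element $g_{[\mu]}\in\Q[X_\ast(S)]^{W_M(F)}$ lands exactly on one of the explicit eigenvalue factors above, with no stray power of $q$. I would handle this by first treating the split case ($n=1$, $\Gamma_E$ trivial), where the norm map is the identity and the claim reduces to the statement that $\mu$ is a weight of its own highest-weight module and $g_{[\mu]} = q^{\langle\mu,\rho\rangle}h_\mu$; then the quasi-split case follows by replacing $\mu$ throughout by its Galois norm over $F$ and observing that the Weyl-group orbit structure is compatible with this, so $H_{G,\mu_E}(X)$ still has the displayed product form and $g_{[\mu]}$ is still the $\lambda=\mu$ factor. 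This is essentially B\"ultel's argument, and I would cite \cite{B}, proposition (3.4), and \cite{W}, (2.9), for the details of the normalization computation.
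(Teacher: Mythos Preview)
The paper does not give a proof of this proposition at all; it only states the result and cites B\"ultel \cite{B}, Proposition~(3.4), and Wedhorn \cite{W}, (2.9). Your sketch --- diagonalize $r_{\mu_E}$ over the maximal torus, read off the weights as the roots of the characteristic polynomial, and match $g_{[\mu]}$ with the factor indexed by the highest weight $\mu$ after tracking the $q$-normalizations coming from the twisted Satake transform and the Galois norm --- is precisely B\"ultel's argument, and you end by citing the same two references for the bookkeeping, so you are doing strictly more than the paper while staying on the same track.

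One small point of precision: you write ``restricting $r_{\mu_E}$ to $\hat S$'', but the weights live in $X_*(T)$ and the factorization is a priori over $O(\hat T)$; the passage to $O(\hat S)$ goes through the identification $O(\hat T)^\sigma \cong O(\hat S)$ (which you implicitly use via the Frobenius-norm $\lambda+\sigma\lambda+\cdots+\sigma^{n-1}\lambda$). Also, the reason $g_{[\mu]}$ is simply $q^{\langle\mu,\rho\rangle}h_\mu$ under the twisted Satake for $M$ (rather than a genuine $W_M(F)$-orbit sum) is that in the intended application $M$ is the centralizer of $\mu$, so $\mu$ is central in $M$ and $K_M\mu(\varpi)K_M=\mu(\varpi)K_M$; you use this implicitly in your split-case reduction and it would be worth making explicit.
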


\section{Statement of the congruence relation conjecture}\label{Stconj}
In this section I state the Chai-Faltings' version of the congruence relation \ref{conj} precisely for Hodge type Shimura varieties. 

\subsection{Moduli space of p-isogenies}\label{mspi} In his paper\cite{Ki}, Kisin proved the extension property and the smoothness of the semi-global integral model of Shimura varieties with the hyperspecial level structure. He also showed that it carries a ``universal family" of abelian varieties with the Hodge tensors determined by a Hodge embedding of its Shimura data.

Let $(G,h)$ be a Hodge type Shimura datum, in which $G$ is a reductive group over $\mathbb{Q}$ and $h:\mathbb{S}\longrightarrow G_{\mathbb{R}}$ a $G$-Hodge structure. Let $\mu$ be the Hodge cocharacter associated with $h$. Fix a prime $p$ over which $G$ is nonramified, i.e., $G_{\mathbb{Q}_p}$ is quasi-split and splits over $\mathbb{Q}_p^{nr}$. In this case there exists a reductive integral model of $G$ over $\mathbb{Z}_{(p)}$, so it is possible to define $K_p=G(\mathbb{Z}_p)$. Also choose a compact open subgroup $K^p\subset G(\mathbb{A}_f^p)$ which is small enough so that Sh$_{G,K_pK^p}$ is a variety over the reflex field of $(G,\mu)$, say $E$. Now let $(\text{GSp}_{2g},h_{2g})$ be a Siegel Shimura datum associated to a symplectic space over $\mathbb{Q}$ which admits a self-dual lattice $\Lambda_p$ in $V_{\mathbb{Q}_p}$. Let $r:G\longrightarrow\text{GSp}_{2g}$ be a symplectic faithful representation such that $r\circ h=h_{2g}$. Kisin showed that $G$ can be identified with the pointwise fixer of a set of tensors $(s_0)\subset V^\otimes$ inside of GSp$_{2g}$. By Zahrin's trick, one can assume that $r$ is defined over $\mathbb{Z}_{(p)}$. Let $K'_p=\text{GSp}_{2g}(\mathbb{Z}_p)$ and ${K'}^p$ small enough but still containing $r(K^p)$, one obtains the following morphism of varieties
\begin{equation}
Sh_{G,K_pK^p}\longrightarrow Sh_{GSp_{2g},K'_p{K'}^p,E}
\end{equation}
The Siegel modular variety has a smooth semi-global integral model over the ring $\mathbb{Z}_{(p)}$ by the construction of Mumford, let's call it $\mathscr{S}$ . Let $v$ be a prime of $E$ over $p$, and let $O_{E,(v)}$ be the localization of $O_E$ at $v$, and $k$ be the residue field $O_{E,(v)}/(v)$. Let $\mathscr{S}_{2g,K'_p{K'}^p,O_{E,(v)}}=\mathscr{S}_{2g,K'_p{K'}^p}\times_{\mathbb{Z}_{(p)}} O_{E,(v)}$. Then following Milne, the integral model for Sh$_{G,K_pK^p}$ is defined as the normalization of the closure of the image of the following morphism
\begin{equation}
Sh_{G,K_pK^p}\longrightarrow Sh_{GSp_{2g},K'_p{K'}^p,E}\longrightarrow\mathscr{S}_{2g,K'_p{K'}^p,O_{E,(v)}}
\end{equation}
Denote $Sh_{G,K_pK^p}$ by $Sh$ and $\mathscr{S}$ for short since I will always  fix a small enough $K^p$. Also, let $\mathscr{S}_k$ be special fiber for $\mathscr{S}$. Kisin proved that $\mathscr{S}$ is a smooth scheme over $O_{E,(v)}$ and it acquires a family of abelian schemes $A$ with a family of Hodge tensors $(s_{et,\alpha},s_{dR,\alpha},s_{cris,\alpha})$ by pulling back the universal family over $\mathscr{S}_{2g,K'_p{K'}^p,O_{E,(v)}}$. More precisely, there is a universal abelian scheme $A_0$, together with a prime to $p$ polarization $\lambda$ and a level structure
\begin{equation*}
\eta_0\in H^0(\mathscr{S}_{2g,K'_pK^{'p},O_{E,(v)}},Isom_{GSp}(V_{\mathbb{A}_f^p},(R^1f_*\mathbb{A}^p_f)^\lor)/{K'}^p)
\end{equation*}
where the subscript GSp means carrying the symplectic form on $V$ to the Weil form on $A_0$ for all prime to $p$ Tate modules. Then $(A,\lambda)$ is the pull back of $(A_0,\lambda_0)$. It also has a universal $K^p$-level structure derived from $\eta_0$, but I need to know the precise meaning of the tensors first
\begin{itemize}
\item the $l$-adic \'etale tensor: Let $S$ be a scheme over $E$. For any $S$ point, $S\longrightarrow Sh_{G,K_pK^p}$, pulling back the ``universal family" one gets an abelian scheme up to prime to $p$ quasi-isogeny:$A\longrightarrow S$. There is a local system of rank $2g: R_1f_*\mathbb{Q}_l$. A tensor is a morphism of local systems:$\mathbb{Q}_l\longrightarrow(R_1f_*\mathbb{Q}_l)^\otimes$.
\item the crystalline tensor: Let $S$ be a scheme over $k$. For any $S$ point:$S\longrightarrow\mathscr{S}_k$, there is an associated abelian variety $A\longrightarrow S$. It has a crystal: $\mathbb{D}(A)$ over the crystalline site $(S/W(k))$. A tensor is a morphism of crystals: $\mathbb{D}(\mathbb{Q}_p/\mathbb{Z}_p)\longrightarrow\mathbb{D}(A)^\otimes$. 
\item the de Rham tensor: Let $S$ be a scheme over $E$. For any $S$ point: $S\longrightarrow Sh_{G,K_pK^p}$, the associated abelian variety $A\longrightarrow S$. A tensor is a morphism $\mathcal{O}_S[1/p]\longrightarrow R^1f_*\mathcal{O}_A[1/p]$.
\end{itemize}
Then the tensors $(s_0)$ determine tensors for each type, which are denoted by $(s_{et,\alpha})$ for \'etale tensors collectively for all primes and $s_{cris,\alpha}$ for the crystalline tensors. The pull back of $\eta_0$ to $\mathscr{S}$ actually reduces to a section of  Isom$_G(V_{\mathbb{A}_f^p},(R^1f_*\mathbb{A}_f^p)^\lor)/K^p$, in which the subscript $G$ means $\eta$ should carry the standard tensors $(s_0)$ to the tensors $(s_{et,\alpha})$ of $A$ over $\mathscr{S}$. Therefore one obtains a ``universal family" over $\mathscr{S}$, namely $(A,\lambda,(s_{et,\alpha},s_{dR,\alpha},s_{cris,\alpha}),\eta)$. 

Now define the moduli space of $p$-isogenies (or rather $p$-quasi-isogenies). Let $p-Isog$ be a functor from $O_{E,(v)}$-schemes to groupoids, such that for a scheme $S$ over $O_{E,(v)}$, $p-Isog(S)$ consists of the following data
\begin{itemize}
\item Two $S$-points of $\mathscr{S}:x_i:S\longrightarrow\mathscr{S}$, for $i = 1,2$. It corresponds two abelian varieties over $S$ with additional structures: $(A_i,\lambda_i,(s_{et,\alpha,i},s_{dR,\alpha,i},s_{cris,\alpha,i}),\eta_i)$;
\item A $p$-quasi-isogeny $f$ up to prime to $p$ quasi-isogeny
\begin{equation}
(A_i,\lambda_1,(s_{et,\alpha,1},s_{dR,\alpha,1},s_{cris,\alpha,1}),\eta_1)\longrightarrow(A_2,\lambda_2,(s_{et,\alpha,2},s_{dR,\alpha,2},s_{cris,\alpha,2}),\eta_2)
\end{equation}
such that $f$ preserves the tensors, more precisely this means: For any geometric point $x:\text{Spec}\overline{K}\longrightarrow S$ and
\begin{enumerate}
\item If $\overline{K}$ is of characteristic $0$, $f$ carries the \'etale tensors $(s_{et,\alpha,1})$ to $(s_{et,\alpha,2})$ on the induced maps on the rational \'etale homologies of $A_i|_x$ for all primes (or cohomologies, where the map is the other way around);
\item If $\overline{K}$ is of characteristic $p$, then $f$ carries $s_{cris,\alpha,2}$ to $s_{cris,\alpha,1}$ on the rational crystalline cohomology of the abelian varieties, $\mathbb{D}(A_i|_x)(W(\overline{K}))[\frac{1}{p}]$.
\end{enumerate}
\item $f$ preserves the level structure, i.e. $f\circ\eta_1=\eta_2$ for the $f$ induced map on the prime to $p$ \'etale homologies (Tate modules). 
\end{itemize}

The pullback of the polarization $\lambda_2$ to $A_1$ equals $\gamma\cdot\lambda_1$ for some $\gamma$ well defined up to $p$-units (i.e. elements of $\mathbb{Z}^*_{(p)}$). Let $\gamma=\gamma'\cdot p^c$ for $\gamma'\in\mathbb{Z}^*_{(p)}$ and a unique $c\in\mathbb{Z}$. Let's call $p^c$ the multiplicator of $f$ and define $p-Isog^{(c)}$ to be the subfunctor of $p-Isog$ classifying quasi-isogenies with multiplicator $p^c$. Since the multiplicator is locally constant on $S$ for any point in $p-Isog(S)$, $p-Isog^{(c)}$ is an open and closed subfunctor of $p-Isog$. In other words, it consists of some connected components of $p-Isog$. 

By sending a $p$-isogeny tuple to $A_1$ and $A_2$, one gets the source projection $s$ and the target projection $t$
\begin{equation}
\xymatrix { & \ar[ld]_s p-Isog \ar[rd]^t & \\
\mathscr{S} & & \mathscr{S}
}
\end{equation}
As seen later, the elements of the Hecke algebra can define quasi-isogenies between abelian varieties. Since the coefficients of the Hecke polynomial all define genuine isogenies, from now on I only consider the components of $p-Isog$ for genuine isogenies, and will still use $p-Isog$ to mean the union of all these components. In this case $c\geq 0$.

The $p-Isog$ has the following property
\begin{proposition}
The functor $p-Isog$ is representable by a scheme whose connected components are quasi-projective. Also the two projections $s$ and $t$, when restricted to each $p-Isog^{(c)}$, are proper over $\mathscr{S}$. 
\end{proposition}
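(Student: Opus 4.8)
The plan is to bootstrap from the Siegel case and transport everything along the Hodge embedding fixed in Section \ref{Mofp}. First I would invoke the corresponding statement for the Siegel modular variety, which is due to Faltings--Chai \cite{FC} and Moonen \cite{M}: the functor $p-Isog_{2g}$ of $p$-quasi-isogenies of prime-to-$p$ polarized abelian schemes with $K'^p$-level structure is represented by a scheme with quasi-projective connected components, and on each multiplicator piece $p-Isog_{2g}^{(c)}$ the source and target maps are proper (indeed projective) over the Siegel integral model. The mechanism here is boundedness: a genuine $p$-isogeny $f\colon A_1\to A_2$ with multiplicator $p^c$ satisfies $\ker f\subset A_1[p^c]$, a finite locally free subgroup scheme of order a fixed power of $p$ depending only on $c$ and $g$, and $(A_2,\lambda_2,\eta_2)$ is recovered from $(A_1,\lambda_1,\eta_1)$ together with $\ker f$. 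Hence $p-Isog_{2g}^{(c)}$ is the closed subscheme (cut out by the Weil-pairing isotropy condition) of the relative Hilbert scheme of finite locally free subgroup schemes of the given order inside $A_1[p^c]$; that Hilbert scheme is projective over the base via the source map, and the symmetric argument applied to the isogeny $A_2\to A_1$ with $g\circ f=p^c$ handles the target map.

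Next I would form the fibre product over the double Siegel variety
\begin{equation}
Z \;:=\; p-Isog_{2g} \,\times_{\,\mathscr{S}_{2g,O_{E,(v)}}\times_{O_{E,(v)}}\mathscr{S}_{2g,O_{E,(v)}}}\, \bigl(\mathscr{S}\times_{O_{E,(v)}}\mathscr{S}\bigr),
\end{equation}
using the morphism $\mathscr{S}\to\mathscr{S}_{2g,O_{E,(v)}}$ from Section \ref{Mofp}. That morphism is finite, being the normalization of a closed subscheme in Kisin's construction, so $Z\to p-Isog_{2g}$ is finite; therefore $Z$ is representable with quasi-projective connected components, it inherits a multiplicator decomposition $Z^{(c)}$, and it carries source and target maps $s_Z,t_Z\colon Z\to\mathscr{S}$. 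On each $Z^{(c)}$ the composite $Z^{(c)}\xrightarrow{s_Z}\mathscr{S}\to\mathscr{S}_{2g,O_{E,(v)}}$ coincides with the composite of the finite map $Z^{(c)}\to p-Isog_{2g}^{(c)}$ with the proper source map, hence is proper; since $\mathscr{S}\to\mathscr{S}_{2g,O_{E,(v)}}$ is separated, the cancellation property forces $s_Z$ (and likewise $t_Z$) to be proper on each $Z^{(c)}$.

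It then remains to carve out $p-Isog$ inside $Z$ by the requirement that $f$ respect the Hodge tensors. Over the generic fibre this is a closed condition: asking that $f$ carry $(s_{et,\alpha,1})$ to $(s_{et,\alpha,2})$ on the prime-to-$p$ and $p$-adic \'etale realizations defines a closed subscheme $p-Isog_E\subset Z_E$, which therefore has quasi-projective components and proper source/target maps on its multiplicator pieces. Following the definition of $\mathscr{S}$ itself, I would then set $p-Isog$ equal to the normalization of the scheme-theoretic closure of $p-Isog_E$ in $Z$; being finite over a closed subscheme of $Z$, this is representable, has quasi-projective connected components, and has $s$ and $t$ proper over $\mathscr{S}$ on each $p-Isog^{(c)}$, which is the assertion of the proposition. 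What must still be justified is that this scheme does represent the functor described above, i.e.\ that over the special fibre its points are exactly the $p$-quasi-isogenies preserving the crystalline tensors.

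That last point is the main obstacle, and it is not formal: it is the crystalline counterpart of Kisin's comparison theorems for integral models. One has to show that along $p-Isog$ the tensors $s_{cris,\alpha}$ on the two abelian schemes are horizontal and match the de Rham, hence the \'etale, tensors, so that an isogeny preserving the \'etale tensors in characteristic zero specializes to one preserving the crystalline tensors in characteristic $p$, and conversely that any such characteristic-$p$ isogeny already lies in the closure of $p-Isog_E$. This is exactly the input supplied by the work of Kisin \cite{Ki} (and, in the form needed here, by Hong \cite{Hong}); granting it, the proposition follows from the representability and properness statements obtained above.
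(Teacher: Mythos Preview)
Your overall strategy---pull back the Siegel $p$-isogeny space along the Hodge embedding and then impose the tensor condition---is reasonable, but the final step has a genuine gap that is not repaired by invoking Kisin or Hong.

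You propose to \emph{define} $p\text{-}Isog$ as the normalization of the closure of $p\text{-}Isog_E$ inside $Z$, and then to check that this scheme represents the functor. For that you need, as you note, that every characteristic-$p$ isogeny preserving the crystalline tensors already lies in the closure of $p\text{-}Isog_E$. But this is false in general: there can be connected components of $p\text{-}Isog$ entirely concentrated in characteristic $p$, parametrizing tensor-preserving isogenies that do not lift to characteristic $0$ (the paper remarks on exactly this phenomenon just after its proof). Such components have empty generic fibre and are invisible to any ``closure of the generic fibre'' construction. So your candidate scheme does not represent the functor, and no comparison theorem will bridge the gap, because the needed statement is simply not true.

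The paper's argument is organized differently and sidesteps this issue. Rather than starting from the Siegel $p$-isogeny space, it works with the Hom-scheme $Hom_{\mathscr{S}\times\mathscr{S}}(s^*A,t^*A)$ over $\mathscr{S}\times_{O_{E,(v)}}\mathscr{S}$, which is representable with quasi-projective components by a theorem of Hida. The conditions ``is an isogeny'' and ``preserves level structure'' are visibly locally constant, so they cut out unions of components. The substantive point is then to show that ``preserves the tensors'' is also locally constant on this Hom-scheme, so that $p\text{-}Isog$ is an open and closed subscheme (hence itself a union of connected components, some possibly supported in the special fibre). On a component meeting characteristic $0$ this comes from the rigidity of sections of the \'etale local system together with the \'etale--crystalline comparison; on a component concentrated in characteristic $p$ one uses Madapusi--Pera's horizontality lemma for crystalline tensors (their Lemma~5.10) to propagate the vanishing of $p^a f^*s_{cris,\alpha,2}-s_{cris,\alpha,1}$ from one geometric point to all. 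Properness on each $p\text{-}Isog^{(c)}$ then follows from the valuative criterion as in Chai--Faltings. If you want to salvage your fibre-product approach, you should replace the ``closure of the generic fibre'' step by this locally-constant argument inside $Z$.
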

\begin{proof}
The proof follows from Hida\cite{Hida} and Chai-Faltings\cite{FC}. First consider the following diagram
\begin{equation*}
\xymatrix{ & \ar[ld]_s \mathscr{S}\times_{O_{E,(v)}}\mathscr{S} \ar[rd]^t & \\
\mathscr{S} & & \mathscr{S}}
\end{equation*}
Pullback the universal abelian scheme $A$ over $\mathscr{S}$ along $s$ and $t$, one gets: $s^*A$ and $t^*A$, together with additional structures. Consider the following functor over $\mathscr{S}\times_{O_{E,(v)}}\mathscr{S}$
\begin{equation*}
Hom_{\mathscr{S}\times_{O_{E,(v)}}\mathscr{S}}(s^*A,t^*A)
\end{equation*}
The $Hom$ means the morphism as abelian schemes (up to prime to $p$ quasi-isogeny), it doesn't have to be an isogeny, nor have to preserve other additional structures. By Hida\cite{Hida} Theorem 6.6, this functor is a scheme over $\mathscr{S}\times_{O_{E,(v)}}\mathscr{S}$ whose connected components are all quasi-projective. Then from the fact that being an isogeny and preserving level structures are locally constant, these two conditions cut out certain connected components of this $Hom$ scheme. For simplicity, I still use $Hom_{\mathscr{S}\times_{O_{E,(v)}}\mathscr{S}}(s^*A,t^*A)$ to mean the union of these components.

One also has to take care of the issue of preserving the tensors. First there is a diagram
\begin{equation*}
\xymatrix{p-Isog \ar[r] \ar[rd]_{(s,t)} & Hom_{\mathscr{S}\times_{O_{E,(v)}}\mathscr{S}}(s^*A, t^*A) \ar[d]^{(s,t)}\\
 &  \mathscr{S}\times_{O_{E,(v)}}\mathscr{S}
}
\end{equation*}
The horizontal arrow is an embedding.  I show that this is an open and closed embedding. This fact makes $p-Isog$ a union of connected components of the scheme $Hom_{\mathscr{S}\times_{O_{E,(v)}}\mathscr{S}}(s^*A,t^*A)$. To this purpose, I need to show the condition of preserving the tensors is locally constant.

Let $f:A_1\longrightarrow A_2$ be a point of $S$, for $S$ a connected component of $p-Isog$ not concentrated in characteristic $p$. Suppose $f|_x:A_1|_x\longrightarrow A_2|_x$ preserves the \'etale tensors at a geometric point Spec$\overline{K}\longrightarrow S$ of characteristic $0$. This means the section $f(s_{et,\alpha,1}) - s_{et,\alpha,2}$ of the \'etale local system $H_1(S,\mathbb{A}_f^p)$ is $0$ at this geometric point. Since a section of a local system vanishes on an entire connected component as long as it vanishes at one of its point. For a geometric point $y$ of characteristic $p$, preserving crystalline tensors means that $f|^*_ys_{cris,\alpha,2} - s_{cris,\alpha,1}=0$. One can apply the crystalline $p$-adic cohomology comparison theorem to show that $f$ preserves the crystalline tensor at this point, since $f_y$ is liftable to characteristic $0$. 

If there is a component concentrated in characteristic $p$, one has to show that if $f$ preserves the crystalline tensors at one geometric point $x$, then it also preserves the crystalline tensors at any other geometric point.  At $x$, say $f|_x^*s_{cris,\alpha,2} = s_{cris,\alpha,1}$. Applying Lemma 5.10 in \cite{MP} to $\mathbb{D}(A)^\otimes$ and the sections $f^*s_{cris,\alpha,2}-s_{cris,\alpha,1}=0$, concluding that it also vanishes at any other geometric point $y$.

For the properness of $s$ (or $t$): $p-Isog^{(c)}\longrightarrow\mathscr{S}$. One can apply the valuative criterion of properness used by Chai-Faltings\cite{FC}.
\end{proof}

\begin{remark}
The components concentrated in characteristic $p$ parametrize the isogenies that are not liftable to characteristic $0$. 
\end{remark}

From now on, use $\mathscr{S}_k^{ord}$ for the $\mu$-ordinary locus of $\mathscr{S}$ and $\mathscr{S}_{k,b}$ for its Newton stratum of Newton type $[b]$. Also let $\mathscr{S}_{\overline{k}},\mathscr{S}^{ord}_{\overline{k}}$ and $\mathscr{S}_{\overline{k},b}$ be the base change to the algebraic closure of $k$. For each irreducible component of $p-Isog_k$ define its Newton type
\begin{definition}
Let $Z$ be an irreducible component of $p-Isog_k$. $Z$ has Newton type $b$ if there is an open subset $U\subset Z$ such that $s(U)\subset\mathscr{S}_{k,b}$. 
\end{definition}

\begin{proposition}\label{Sstrat}
Let $Z$ be an irreducible component of $p-Isog_k$ of Newton type $b$. Then $s(U)\subset\cup_{b'\leq b}\mathscr{S}_{k,b'}$.
\end{proposition}

Now following Moonen\cite{M}, let's define $\mathbb{Q}[p-Isog_E]$,$\mathbb{Q}[p-Isog_k]$ and $\mathbb{Q}[p-Isog_k^{ord}]$. Let $O_{E,(v)}\longrightarrow L$ be a homomorphism of rings, in which $L$ is a field. Let $Z_{\mathbb{Q}}(p-Isog_L)$ be the vector space of algebraic cycles on $p-Isog_L$ with $\mathbb{Q}$ coefficients. Using the following homomorphism, this vector space is made into an algebra
\begin{equation}\label{mult}
p-Isog_L\times_{t,\mathscr{S}_L,s}p-Isog_L\longrightarrow p-Isog_L
\end{equation}
On $S$-points, it is given by the composition
\begin{equation*}
(A_1\xrightarrow{f_1}A_2),(A_2\xrightarrow{f_2}A_3)\longmapsto(A_1\xrightarrow{f_2\circ f_1}A_3)
\end{equation*}
In particular there are algebras $Z_{\mathbb{Q}}(p-Isog_{\mathbb{Q}})$ and $Z_{\mathbb{Q}}(p-Isog_k)$. Let $\mathbb{Q}[p-Isog_E]$ and $\mathbb{Q}[p-Isog_k]$ be the subalgebra of $Z_{\mathbb{Q}}(p-Isog_E)$ and $Z_{\mathbb{Q}}(p-Isog_k)$ generated by the irreducible components. Moonen proved
\begin{lemma}
The underlying vector space of $\mathbb{Q}[p-Isog_E]$ is the subspace of $Z_{\mathbb{Q}}(p-Isog_E)$ spanned by the irreducible components of $p-Isog_E$. 
\end{lemma}

Over the special fiber $\mathscr{S}_k$, it is more complicated since neither the source nor the target is finite. But over the $\mu$-ordinary locus the story is simpler. Define $p-Isog_k^{ord}$ to be the inverse image of $\mathscr{S}_k^{ord}$ under the source of the the target map. Following the same recipe above, there is the algebra $\mathbb{Q}[p-Isog^{ord}_k]$. Moonen proved that the above lemma is still true for $\mathbb{Q}[p-Isog_k^{ord}]$, i.e. the underlying subspace of this algebra is the same as the space spanned by the irreducible components of $p-Isog^{ord}_k$. 
There is a map
\begin{equation*}
ord:p-Isog_k\longrightarrow p-Isog^{ord}_k
\end{equation*}
defined by intersecting the ordinary locus. Another map
\begin{equation*}
cl:p-Isog^{ord}_k\longrightarrow p-Isog_k
\end{equation*}
defined by taking the closure of the ordinary locus. Extending by $\mathbb{Q}$-linearity, there are algebra homomorphisms
\begin{align}
ord:\mathbb{Q}[p-Isog_k]&\longrightarrow\mathbb{Q}[p-Isog^{ord}_k]\\
cl:\mathbb{Q}[p-Isog^{ord}_k]&\longrightarrow\mathbb{Q}[p-Isog_k]
\end{align}
Then $cl\circ ord$ just means ``dropping the irreducible components which are not $\mu$-ordinary". More precisely, there is an element in $\mathbb{Q}[p-Isog_k]$, say, $\Sigma c_i[Z_i]+\Sigma c_j'[Z_j]$ in which $Z_i$s are $\mu$-ordinary but $Z_j$s are not. Then the effect of $cl\circ ord$ just makes it $\Sigma c_i[Z_i]$. 

There is a specialization map from the generic fiber to the special fiber
\begin{equation*}
p-Isog_E\longrightarrow\mathbb{Z}[p-Isog_k]
\end{equation*}
defined by taking the closure of $p-Isog_E$ in $p-Isog$ then taking the reduction mod $p$. This in turn induces a homomorphism of vector spaces
\begin{equation*}
\sigma:\mathbb{Q}[p-Isog_E]\longrightarrow\mathbb{Q}[p-Isog_k]
\end{equation*}
Since the specialization preserves the dimension, the image of $\sigma$ is actually contained in the vector space spanned by the dimension $d=dimSh_{K_pK^p}$ cycles of $p-Isog_k$.

\subsection{Hecke algebra as algebra of p-isogenies}\label{pIsog} There is a homomorphism of algebras from the Hecke algebra $\mathscr{H}(G(\mathbb{Q}_p)//K_p)$ to the algebra $\mathbb{Q}[p-Isog_E]$. This is defined via the $\mathbb{Z}_p$-valued \'etale homology. Let $Z$ be an irreducible component of $p-Isog_E$, taking a geometric point Spec$\overline{K}\longrightarrow Z$. Then one gets a pair of abelian varieties over $\overline{K}$ and a $p$-isogeny $f$ between them. Taking their $p-adic$ Tate module, from the definition of $p-Isog$, one gets a linear map preserving Hodge tensors: $H_{1,et}(A_1,\mathbb{Z}_p)\longrightarrow H_{1,et}(A_2,\mathbb{Z}_p)$. Recall that there is a self-dual symplectic lattice $\Lambda_p$ defined in (3.1). There are isomorphisms $\gamma_i:H_{1,et}(A_i,\mathbb{Z}_p)\longrightarrow\Lambda_p$ carrying tensors $(s_{p,\alpha,i})$ to the standard tensors $(s_0)$. Therefore $\gamma_2\circ f\circ\gamma_1^{-1}$ induces an automorphism of $\Lambda_p\otimes\mathbb{Q}$ fixing the standard tensors. Hence it is an element of $G(\mathbb{Q}_p)$. Changing $\gamma_i$ amounts to changing $g$ to $h_1gh_2$ in which $h_1,h_2\in K_p=G(\mathbb{Z}_p)$. Therefore the coset $K_pgK_p$ is well defined. This coset is called the relative position of the $p$-isogeny. By locally constancy of relative positions, all the geometric points of $Z$ have the same relative position, so it is an invariant of irreducible components. Define the map
\begin{align}
K_p\backslash G(\mathbb{Q}_p)/K_p&\longrightarrow\mathbb{Q}[p-Isog_E]\\
K_pgK_p&\longmapsto\Sigma1\cdot{[Z]}
\end{align}
Where $Z$ runs through all the irreducible components of relative position $K_pgK_p$. Then the algebra homomorphism
\begin{equation}
h:\mathscr{H}(G(\mathbb{Q}_p//K_p)\longrightarrow\mathbb{Q}[p-Isog_E]
\end{equation}
is just the $\mathbb{Q}$-linear span of the above map. 

\begin{remark}
For the above map to make sense, $\sigma Z$ needs to be a finite sum. This is indeed the case. To see this, on the generic fiber, the source map $s$ is quasi-finite when restricted to the components of $p-Isog_E$ with a fixed relative position. Since $s$ is proper, $s$ is indeed finite. Therefore there are only finitely many such components. 
\end{remark}

Composing this homomorphism with the specialization map, there is a linear map
\begin{equation}\label{hecketoring}
\sigma\circ h:\mathscr{H}(G(\mathbb{Q}_p)//K_p)\longrightarrow\mathbb{Q}[p-Isog_k]
\end{equation}
Extending this map to polynomial rings with coefficients in each one of the algebras
\begin{equation}
\mathscr{H}(G(\mathbb{Q}_p//K_p)[X]\longrightarrow\mathbb{Q}[p-Isog_k][X]
\end{equation}
Therefore the Hecke polynomial in the previous section can be viewed as a polynomial with coefficients in the latter algebra, hence it is meaningful to talk about its roots in the latter algebra. 

\subsection{Two sections of the source map: $F$ and $\langle p\rangle$}\label{twosections} Next I define a section of the source map, i.e., a morphism of schemes\cite{M}:$\phi:\mathscr{S}_k\longrightarrow p-Isog_k$ such that $s\circ\phi=id$. Given an $S$ point $x$ on $\mathscr{S}_k$, it corresponds to an abelian variety over $S$ with additional structures: $(A,(s_{cris,\alpha}),\lambda,\eta)$. Then define
\begin{equation}
\phi:(A,(s_{cris,\alpha}),\lambda,\eta)\longmapsto(Fr: (A,(s_{cris,\alpha}),\lambda,\eta),\lambda,\eta)\longmapsto(A^{(p)},(s^{(p)}_{cris,\alpha}),\lambda^{(p)},\eta^{p}))
\end{equation}
in which $Fr$ is the relative Frobenius. To be a section of the source projection, $F$ must preserve the crystalline Hodge tensor and the level structure. This is indeed the case
\begin{proposition}
$Fr$ preserves the crystalline Hodge tensors, and $\eta^{(p)}$ satisfies the following: Let
\begin{equation}
V\otimes\mathbb{A}_f^p\xrightarrow{f}H_1(A,\mathbb{A}_f^p)\xrightarrow{Fr}H_1(A^{(p)},\mathbb{A}_f^p)
\end{equation}
be the level structure of $A$ and the induced map of $Fr$ on the prime to $p$ Tate module. Then $\eta^{(p)}=Fr\circ\eta$ mod $K^p$. Therefore
\begin{equation}
(F:(A,(s_{cris,\alpha}),\lambda,\eta)\longmapsto(A^{(p)},(s^{(p)}_{cris,\alpha}),\lambda^{(p)},\eta^{(p)}))
\end{equation}
is indeed a point of $p-Isog^{(1)}_k$. 
\end{proposition}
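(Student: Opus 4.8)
The plan is to verify the three assertions bundled into the statement separately: that the relative Frobenius preserves the crystalline tensors, that $\eta^{(p)}=Fr\circ\eta\pmod{K^p}$, and that the resulting $p$-isogeny has multiplicator $p$ (so the tuple lands in $p-Isog^{(1)}_k$). Since all conditions in the definition of $p-Isog$ are tested at geometric points and $\mathscr{S}_k$ has only points of characteristic $p$, everything reduces to a single abelian variety $A$ over an algebraically closed field $\overline\kappa$ of characteristic $p$, together with its relative Frobenius $Fr=Fr_{A/\overline\kappa}\colon A\to A^{(p)}$. Throughout I would use the standard identities $\pi\circ Fr=\mathrm{Frob}_A$ (for $\pi\colon A^{(p)}\to A$ the canonical projection), $V\circ Fr=[p]_A$, $Fr\circ V=[p]_{A^{(p)}}$, and $Fr^{\vee}=V_{A^{\vee}}$ under $(A^{(p)})^{\vee}=(A^{\vee})^{(p)}$; note also that $Fr$ is a genuine isogeny of degree $p^{g}$, hence a $p$-isogeny, and that condition (1) in the definition (the characteristic-$0$ étale condition) is vacuous here.

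\emph{Crystalline tensors.} Under the canonical identification $\mathbb{D}(A^{(p)})=\sigma^{*}\mathbb{D}(A)$, the map induced by $Fr$ on $\mathbb{D}$ (which I am taking in the cohomological normalization, matching the direction "$s_{\mathrm{cris},\alpha,2}\mapsto s_{\mathrm{cris},\alpha,1}$" in the definition) is precisely the linearization of the crystalline Frobenius $\phi$. By Kisin's construction of $\mathscr{S}$ and of the tensors $s_{\mathrm{cris},\alpha}$ (\cite{Ki}, \cite{MP}) — they are produced from the $p$-adic étale tensors via the crystalline comparison and parallel transport — the $s_{\mathrm{cris},\alpha}$ are $\phi$-invariant, i.e. $\phi$ carries $\sigma^{*}s_{\mathrm{cris},\alpha}=s_{\mathrm{cris},\alpha}^{(p)}=s_{\mathrm{cris},\alpha,2}$ to $s_{\mathrm{cris},\alpha}=s_{\mathrm{cris},\alpha,1}$ in $\mathbb{D}(A)^{\otimes}[\tfrac1p]$, up to the power of $p$ allowed in the definition (encoding the normalization of $\phi$). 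This is exactly condition (2), at every geometric point of $\mathscr{S}_k$.

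\emph{Level structure and multiplicator.} The isogeny $Fr$ induces an isomorphism $Fr_{*}\colon H_{1}(A,\mathbb{A}_{f}^{p})\xrightarrow{\sim}H_{1}(A^{(p)},\mathbb{A}_{f}^{p})$, and $\pi$, being the base change of a universal homeomorphism, induces an isomorphism $\pi_{*}$ on prime-to-$p$ étale homology; from $\pi\circ Fr=\mathrm{Frob}_A$ and the standard triviality of the absolute Frobenius on prime-to-$p$ étale cohomology one gets $\pi_{*}\circ Fr_{*}=\mathrm{id}$, hence $Fr_{*}=\pi_{*}^{-1}$. Since $\eta^{(p)}$ is by construction the transport of $\eta$ along $\pi_{*}^{-1}$, we get $\eta^{(p)}=Fr_{*}\circ\eta$, a fortiori mod $K^{p}$, and $\eta^{(p)}$ is again a $G$-level structure because $\eta$ carries $(s_0)$ to the étale tensors of $A$ and $Fr_{*}$ identifies these with those of $A^{(p)}$. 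For the multiplicator I would use $Fr^{*}\lambda^{(p)}=Fr^{\vee}\circ\lambda^{(p)}\circ Fr$, naturality of Frobenius ($\lambda^{(p)}\circ Fr_{A}=Fr_{A^{\vee}}\circ\lambda$), and $Fr^{\vee}=V_{A^{\vee}}$, to compute
\[
Fr^{*}\lambda^{(p)}\;=\;V_{A^{\vee}}\circ Fr_{A^{\vee}}\circ\lambda\;=\;[p]_{A^{\vee}}\circ\lambda\;=\;p\,\lambda,
\]
so $\gamma=p$, $c=1$, and the tuple $\bigl(Fr\colon(A,(s_{\mathrm{cris},\alpha}),\lambda,\eta)\to(A^{(p)},(s^{(p)}_{\mathrm{cris},\alpha}),\lambda^{(p)},\eta^{(p)})\bigr)$ is a point of $p-Isog^{(1)}_k$.

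The only substantive input is the first step: the $\phi$-invariance of the crystalline tensors is one of the defining features of Kisin's Hodge-type integral model, and the real care there is to identify the Frobenius-semilinear map $\mathbb{D}(Fr)$ with the (linearized) crystalline Frobenius and to track the allowed power of $p$ so that it matches the integer $a$ in the definition of "$f$ preserves the crystalline tensors". The other two steps are formal consequences of the relations among $Fr$, $V$, $[p]$ and duality together with the triviality of the absolute Frobenius on prime-to-$p$ étale cohomology; I expect the normalization bookkeeping in the crystalline step, rather than anything conceptual, to be the main place where errors could creep in.
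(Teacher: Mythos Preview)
Your argument is correct and follows the paper's line: work at a geometric point, use the factorization of the absolute Frobenius as $\sigma\circ Fr$, and deduce both $Fr^{*}(s^{(p)}_{\mathrm{cris},\alpha})=s_{\mathrm{cris},\alpha}$ and $\eta^{(p)}=Fr\circ\eta$. Where the paper justifies the crystalline step with the loosely worded claim that ``the absolute Frobenius induces identity on the crystalline cohomology,'' you instead invoke the $\phi$-invariance of Kisin's crystalline tensors directly---this is the precise input that makes the argument work---and you also supply the multiplicator computation $Fr^{*}\lambda^{(p)}=p\lambda$ that the paper leaves implicit.
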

\begin{proof}
First let's review the definition of the crystalline tensors $(s^{(p)}_{cris,\alpha})$. Given a geometric point Spec$\overline{K}\longrightarrow\mathscr{S}_k$, there is the following Frobenius diagram
\begin{equation}
\begin{tikzcd} A \arrow[drr, bend left, "AbF"] \arrow[ddr] \arrow[dr, "Fr"] & & \\ & A^{(p)} \arrow[r, "\sigma"] \arrow[d] & A \arrow[d] \\ & \overline{K} \arrow[r, "\sigma"] & \overline{K}
\end{tikzcd}
\end{equation}
in which $AbF$ is the absolute Frobenius and $\sigma$ the arithmetic Frobenius. The tensors $(s^{(p)}_{cris,\alpha})$ is defined to be the pull back of $(s_{cris,\alpha})$ along the arithmetic Frobenius. Since the absolute Frobenius induces identity on the cohomology, the pull back of $(s^{(p)}_{cris,\alpha})$ along $Fr$ must coincide with $(s_{cris,\alpha})$. The same argument can prove the compatibility of the level structures.
\end{proof}
Define the image of this section again as $F$, the Frobenius cycle. For later use, I define another section of $s$, even though it is not used in the statement of \ref{conj}. 
Using Koskivirta's notation\cite{K}, this cycle is called $\langle p\rangle$. It is defined as the image of the section
\begin{equation}
\mathscr{S}_k\longrightarrow p-Isog:(A,(s_{cris,\alpha}),\lambda,\eta),\lambda,\eta)\xrightarrow{\times p}(A,(s_{cris,\alpha}),\lambda,p\cdot\eta))
\end{equation}
Koskivirta calls this section ``multiplication by $p$". Its image $\langle p\rangle$ is also the specialization of the cycles indexed by $K_ppK_p$ in the generic fiber. 
\begin{proposition}
When $K^p$ is small enough, the image of this $\langle p\rangle$ is indeed the specialization image of the components of $p-Isog_E$ indexed by the coset $pK_p$.
\end{proposition}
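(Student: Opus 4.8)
The plan is to exhibit, already over $O_{E,(v)}$, an explicit section of the source map whose generic fibre is precisely the locus of $p-Isog_E$ of relative position $pK_p$ and whose special fibre is Koskivirta's cycle $\langle p\rangle$; the proposition then follows by comparing Zariski closures. Concretely, I would define a morphism of $O_{E,(v)}$-schemes $\psi\colon\mathscr{S}\longrightarrow p-Isog^{(2)}$ on $S$-points by
\[
\psi\colon (A,\lambda,(s_\alpha),\eta)\longmapsto\bigl([\times p]\colon(A,\lambda,(s_\alpha),\eta)\longrightarrow(A,\lambda,(s_\alpha),p\cdot\eta)\bigr),
\]
where $[\times p]$ denotes multiplication by $p$ on the abelian scheme $A$ and $(s_\alpha)$ abbreviates the triple $(s_{et,\alpha},s_{dR,\alpha},s_{cris,\alpha})$.

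The first task is to check that $\psi$ lands in $p-Isog$, i.e.\ that $[\times p]$ satisfies the three defining conditions. On every realization (the $l$-adic and $p$-adic \'etale homologies in characteristic $0$, and $\mathbb{D}(A)[1/p]$ in characteristic $p$) the map $[\times p]$ acts as the scalar $p$, i.e.\ as the central element $p\cdot\mathrm{id}\in G(\mathbb{Q}_p)$; since the tensors $(s_0)\subset V^\otimes$ cutting out $G$ inside $\mathrm{GSp}_{2g}$ are fixed by the centre $\mathbb{G}_m\subset G$ (equivalently, are of weight $0$), multiplication by $p$ carries the target tensors to the source tensors on the nose, with no auxiliary $p$-power rescaling — in contrast to a general $p$-isogeny. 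On prime-to-$p$ Tate modules $[\times p]$ is multiplication by the unit $p$, so $[\times p]\circ\eta=p\cdot\eta$ as required, and $[\times p]^*\lambda=p^2\lambda$, placing $\psi$ in $p-Isog^{(2)}$. Next, $s\circ\psi=\mathrm{id}_{\mathscr{S}}$ by construction, so $\psi$ is a section of the separated morphism $s\colon p-Isog^{(2)}\to\mathscr{S}$, hence a closed immersion; its image $\psi(\mathscr{S})$ is therefore a closed subscheme of $p-Isog$, isomorphic to $\mathscr{S}$ and so $O_{E,(v)}$-flat with dense, reduced generic fibre.

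The heart of the argument is to identify $\psi(\mathscr{S})_E$ with the components of $p-Isog_E$ of relative position $K_p(p\cdot\mathrm{id})K_p=pK_p$. For a geometric point $(f\colon A_1\to A_2)$ of that relative position, the $p$-adic realization of $f$ is $p$ times a lattice automorphism, so $\ker f=A_1[p]$ and $f$ factors uniquely as $A_1\xrightarrow{[\times p]}A_1\xrightarrow{\iota}A_2$ with $\iota$ an isomorphism. One then unwinds the requirements that $f$ preserve the polarizations, the Hodge tensors in all three incarnations, and the level structures, and uses that $[\times p]$ already does so, to conclude that $\iota$ is an isomorphism of the structured objects carrying $\eta_1$ to $p^{-1}\eta_2$; equivalently, the point lies in the image of $\psi_E$. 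Conversely every point of $\psi(\mathscr{S})_E$ visibly has relative position $pK_p$, and since $p\cdot\mathrm{id}$ is central, $K_p\cap(p\cdot\mathrm{id})K_p(p\cdot\mathrm{id})^{-1}=K_p$, so $\psi_E$ realizes $p-Isog_E^{pK_p}$ as a copy of $Sh_{G,K_pK^p,E}$ itself; in particular there is no further component. Thus $\psi(\mathscr{S})_E$ equals the union of the components indexed by $pK_p$, i.e.\ $h(1_{pK_p})$ as a cycle. (The hypothesis ``$K^p$ small enough'' enters only through the standing requirement that $\mathscr{S}$ be a scheme and that $p\cdot\mathrm{id}\notin K^p$, so that the source and target data are genuinely distinct.)

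To finish: $\psi(\mathscr{S})\subset p-Isog$ is closed, $O_{E,(v)}$-flat, with generic fibre the $pK_p$-locus, hence equals its Zariski closure inside $p-Isog$. By the definition of the specialization map $\sigma$, and using that $\mathscr{S}_k$ is reduced (indeed smooth over $k$) so that no component of $\psi(\mathscr{S})_k$ is multiply counted, one obtains
\[
\sigma\bigl(h(1_{pK_p})\bigr)=[\psi(\mathscr{S})_k]=[\psi(\mathscr{S}_k)].
\]
But $\psi|_{\mathscr{S}_k}$ is exactly the section $(A,(s_{cris,\alpha}),\lambda,\eta)\mapsto([\times p]\colon(A,\dots,\eta)\to(A,\dots,p\eta))$ defining $\langle p\rangle$, so $[\psi(\mathscr{S}_k)]$ is the image of $\langle p\rangle$, which is the assertion. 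The main obstacle is the moduli-theoretic matching in the previous paragraph — showing that ``$f$ preserves the Hodge tensors'' (especially the crystalline ones in the special fibre) is equivalent to $\iota$ being an isomorphism of tensored abelian varieties, and that the $pK_p$-locus contains no component beyond the one coming from $Sh_{G,K_pK^p,E}$ — together with the weight-$0$ normalization of the tensors that makes $[\times p]$ strictly tensor-preserving without a $p$-power correction.
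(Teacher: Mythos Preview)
Your proof is correct and follows essentially the same approach as the paper: both identify the $pK_p$-locus on the generic fibre with multiplication-by-$p$ (the paper by observing that $p^{-1}f$ is an isomorphism on $p$-adic Tate modules, hence a prime-to-$p$ quasi-isogeny, so $f$ is equivalent to $[\times p]$), and then argue that multiplication by $p$ specializes to multiplication by $p$. Your write-up is more carefully structured---defining the integral section $\psi$ over $O_{E,(v)}$ and using flatness to compute the Zariski closure---while the paper's proof is a brief pointwise sketch, but the underlying argument is the same.
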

\begin{proof}
To see this, first recall what are those cycles indexed by $pK_p$: It is formed by the cycles with (\'etale) relative position $p\cdot Id$. Suppose there is a geometric point on this cycle, it corresponds to a pair $(A_1,(s_{et,\alpha,1}),\lambda,\eta)\longmapsto(A_2,(s_{et,\alpha,2}),\lambda,\eta)$ such that $f$ induce relative position $K_ppK_p$ on $\Lambda_p\otimes\mathbb{Q}$. Then consider $p^{-1}\cdot f$, it induces an isomorphism from $H_{1,et}(A_1,\mathbb{Z}_p)$ to $H_{1,et}(A_2,\mathbb{Z}_p)$, so it must be a prime to $p$ quasi-isogeny. So $f$ is equivalent to multiplication by $p$. The multiplication by $p$ cycle specializes to multiplication by $p$. 
\end{proof}

Now the conjecture is the following
\begin{conjecture}
Let $F$ be the Frobenius cycle defined as above, then $H(F)=0$ in $\mathbb{Q}[p-Isog_k]$. 
\end{conjecture}

\section{The ordinary congruence relation}\label{ordcon}
In section\ref{pIsog} I also introduced a ring $\mathbb{Q}[p-Isog_k^{ord}]$ and maps $ord$ and $cl$. Composing $\mathscr{H}(G(\mathbb{Q}_p)//K_p)\longrightarrow\mathbb{Q}[p-Isog_k]$ with $cl\circ ord$, there is a map
\begin{equation*}
\mathscr{H}(G(\mathbb{Q}_p//K_p)\longrightarrow\mathbb{Q}[p-Isog_k]\xrightarrow{cl\circ ord}\mathbb{Q}[p-Isog_k]
\end{equation*}
Then the ordinary congruence relation reads
\begin{theorem}
View $H(X)$ as a polynomial with coefficients in $\mathbb{Q}[p-Isog_k]$ by the map above. Then $cl\circ ord(F)$ is a root of this polynomial. 
\end{theorem}
Note that this theorem is weaker than the conjecture in the last section, because to prove that conjecture one also has to check the polynomial formed by the terms with non generically ordinary coefficients vanishes on $F$.

This theorem is known for all Hodge type Shimura varieties, since $\mathbb{Q}[p-Isog^{ord}_k]$ can be ``parametrized" by the spherical Hecke algebra $\mathscr{H}(M(\mathbb{Q}_p)//K_{M,p})$, in which $M$ is the centrailizer of the Hodge cocharacter of the Shimura variety. More precisely, there is a map:$\mathscr{H}(M(\mathbb{Q}_p)//K_{M,p})\longrightarrow\mathbb{Q}[p-Isog_k^{ord}]$ where $K_{M,p}$. There is a diagram
\begin{equation*}
\xymatrix{
\mathscr{H}(G(\mathbb{Q}_p)//K_p,\mathbb{Q}) \ar[r]_h \ar[d]_{\mathcal{S}^G_M} & \mathbb{Q}[p-Isog_E] \ar[d]_{ord\circ\sigma}\\
\mathscr{H}(M(\mathbb{Q}_p)//K_{M,p}),\mathbb{Q}) \ar[r]_{\overline{h}} & \mathbb{Q}[p-Isog_k^{ord}]
}
\end{equation*} 
The element $g_{[\mu]}$ defined by B\"ultel goes to $F$ in $\mathbb{Q}[p-Isog_k^{ord}]$. Therefore by this diagram the group theoretic congruence by B\"ultel to the ordinary congruence relation. 

Moonen\cite{M} used Serre-Tate theory to prove the existence of the above diagram in the PEL case, and Hong\cite{Hong} generalized Moonen's proof to the Hodge type case recently.

\section{GSpin Shimura varieties and its isocrystals}\label{shim}
In this section I review the GSpin Shimura varieties, the main objects of interests in this paper. A thorough theory of these is developed by Madapusi-Pera\cite{MP}. Zhang's thesis\cite{Zh} also has a good introduction to it. I also review the classification of its isocrystals of its good reduction. 

\subsection{GSpin Shimura datum}\label{gsd}
Since I need to use the integral model of the Shimura varieties later, let's start the definition over $\mathbb{Z}$. Let $(V,q)$ be a quadratic free module over $\mathbb{Z}$ of rank $N$ such that $V\otimes\mathbb{R}$ has signature $(N - 2,2)$; Let $p$ be a prime number such that $V\otimes\mathbb{Z}_p$ is self dual. Let SO$(V)$ be the special orthogonal group over $\mathbb{Z}$ defined by $V$. Let $C(V)=C^+(V)\oplus C^-(V)$ be the Clifford algebra attached to $V$. $V$ is naturally sitting inside $C^-(V)$ via left multiplication. Define the group scheme GSpin$(V)$ whose functor of points on an algebra $R$ is given by ${g\in C^+(V_R)^\times:gV_Rg^{-1}\subset V_R}$. Take an element $\delta\in C(V)^\times$ such that $\delta^*=-\delta$, then one can define a symplectic form $\psi(c_1,c_2)=Trd(c_1\delta c_2)$ on $C(V)$. Since $q$ is perfect on $V_{\mathbb{Z}_p}$, this symplectic form is perfect on $C(V_{\mathbb{Z}_p})$. The left multiplication of GSpin$(V)$ on $C(V)$ defines an embedding
\begin{equation}\label{embd}
\text{GSpin}(V)\longrightarrow\text{GSp}(C(V),\psi)
\end{equation}
As Kisin proved, GSpin is the pointwise fixer of a set of tensors $(s_0)\in C(V)^\otimes$. Madapusi-Pera gave a complete list of these tensors in\cite{MP}, 1.3. For later use, let $D$ be the linear dual of $C(V)_{\mathbb{Z}_p}$. Then $D^\otimes=C(V)^\otimes_{\mathbb{Z}_p}$. 

In this paper it is more convenient to work with the Shimura datum group theoretically. Choose a basis of $V_\mathbb{R}$ such that the quadratic form under this basis is given by
\[
\begin{bmatrix}
-1 & & & &\\
 &-1 & & &\\
 & & 1 & &\\
 & & & \ddots &\\
 & & & & 1
\end{bmatrix}
\]
Define $h:\mathbb{S}\longrightarrow\text{SO}(V_\mathbb{R})$ to be
\[
\begin{bmatrix}
\frac{a^2-b^2}{a^2+b^2} &\frac{2ab}{a^2+b^2} & & &\\
\frac{-2ab}{a^2+b^2} & \frac{a^2-b^2}{a^2+b^2} & & &\\
& & 1 & &\\
& & & \ddots & \\
& & & &1
\end{bmatrix}
\]
This cocharacter defines a Hodge structure on $V$, which is of weight $0$. Then by the Kuga-Satake construction there is a lifting of $h$ to GSpin$(V_\mathbb{R})$, giving rise to a Hodge structure of weight $1$ on the Clifford algebra
\begin{equation}
\xymatrix{ & \text{GSpin}(V_\mathbb{R}) \ar[d] \\
\mathbb{S} \ar[ur] \ar[r] & \text{SO}(V_{\mathbb{R}})}
\end{equation}
Let $\mu$ be the $\mathbb{C}$-cocharacter defined by $\mu(z)=h(z,1)$. Denote the lifting of $\mu$ corresponding to the Kuga-Satake lifting of $\mu$ by $\mu^{KS}$. The conjugacy class of this cocharacter actually descents to $\mathbb{Q}$, so the field of definition of the Shimura varieties is just $\mathbb{Q}$. For details of these facts, see Madapusi-Pera\cite{MP} section 3. 

\subsection{Isocrystals, Rapoport-Zink spaces} At $p$, since $q$ is self-dual on $V_{\mathbb{Z}_p}$, it is possible choose a basis ${e_1,...,e_n}$ of the quadratic free module so that the quadratic form is given by
\[
\begin{bmatrix}
0 & 1& & &\\
 1 & 0& & &\\
 & & \ast & &\\
 & & & \ddots &\\
 & & & & \ast
\end{bmatrix}
\]
Then $\mu$ is conjugate to the following $\mathbb{Z}_p$-cocharacter, which is still denoted by $\mu:\mathbb{G}_{\mathbb{Z}_p}\longrightarrow\text{SO}(V_{\mathbb{Z}_p})$
\[
\begin{bmatrix}
t & & & &\\
 & t^{-1} & & &\\
 & & 1 & &\\
 & & & \ddots &\\
 & & & & 1
\end{bmatrix}
\]
Its corresponding Kuga-Satake lift is
\begin{equation}
\mu^{KS}:\mathbb{G}_m\longrightarrow\text{GSpin}(V),t\longmapsto t^{-1}e_1e_2 + e_2e_1
\end{equation}

Now let $k=\mathbb{F}_p$ and $\overline{k}=\overline{\mathbb{F}}_p$. Let $W=W(\overline{k})$, the ring of Witt vectors of $k$ and $K=W(\overline{k})[1/p]$. Let $B(\text{GSpin},\mu^{KS})$ be the $\mu^{KS}$ admissible Kottwitz set for GSpin$(V_K)$ classifying $\sigma$-conjugacy classes $[b]$ of GSpin$(V)(K)$ whose Newton cocharacter $\nu_b$ is less than or equal to $\mu^{KS}$. The embedding of the group schemes
\begin{equation}
\text{GSpin}(V_{\mathbb{Q}_p})\longrightarrow\text{GL}(C(V_{\mathbb{Q}_p}))
\end{equation}
induced from \ref{embd} together with $\mu^{KS}$ and an element of $[b]\in B(\text{GSpin}(V_{\mathbb{Q}_p}),\mu^{KS})$ define a local nonramified Shimura-Hodge datum in the sense of \cite{HP}, definition 2.2.4. One can define the associated Rapoport-Zink space for the local nonramified Shimura-Hodge datum $(\text{GSpin},b,\mu,C)$. 

First, there exists a $p$-divisible group attached to $(\text{GSpin},b,\mu,C)$\cite{HP}, 2.2.6, denoted by $X_0=X_0(\text{GSpin},b,\mu,C)$ with a set of Frobenius invariant tensors $(s_{\alpha,0})$. Following their notations, let ANilp$_W$ be the category of $W$-algebras on which $p$ is nilpotent. Then the Rapoport-Zink space is defined as follows\cite{HP}, definition 2.3.3
\begin{definition}
Consider the set valued functor
\begin{equation*}
RZ=RZ_{(\text{GSpin},b,\mu,C)}:\text{ANilp}_W\longrightarrow\text{(Ens)}
\end{equation*}
whose functor of points on an $R\in\text{ANilp}_W$ is given by isomorphism classes of the following data: A triple $(X,(s_\alpha),\rho)$ consists of a $p$-divisible group over $R$ and a quasi-isogeny
\begin{equation*}
\rho:X_0\otimes_k\overline{R}\longrightarrow X\otimes_R\overline{R}
\end{equation*}
where $\overline{R}=R/pR$. And $(s_\alpha)$ is a set of crystalline tensor, i.e. morphisms of crystals
\begin{equation*}
s_\alpha:\mathbb{D}(\mathbb{Q}_p/\mathbb{Z}_p)\longrightarrow\mathbb{D}(X)^\otimes
\end{equation*}
over $R$ such that
\begin{equation*}
s_\alpha:\mathbb{D}(\mathbb{Q}_p/\mathbb{Z}_p)[1/p]\longrightarrow\mathbb{D}(X)^\otimes[1/p]
\end{equation*}
are Frobenius equivariant, satisfying the following properties
\begin{itemize}
\item For some nilpotent ideal $J\subset R$ containing $p$, the restriction of $s_\alpha$ to $R/J$ is identified with $s_{\alpha,0}$ under the isomorphism of the isocrystals induced by $\rho$
\begin{equation*}
\mathbb{D}(\rho):\mathbb{D}(X_{R/J})^\otimes[1/p]\xrightarrow{\sim}\mathbb{D}(X_0\times_k R/J)^\otimes[1/p]
\end{equation*}
\item The sheaf of $G_W$-sets over CRIS$(\text{Spec}(R)/W)$ given by the isomorphisms
\begin{equation*}
\underline{Isom}_{s_\alpha,s_0\otimes 1}(\mathbb{D}(X), D\otimes_{\mathbb{Z}_p}R)
\end{equation*}
is a crystal of $G_W$-torsors.
\item There exists an \'etale cover ${U_i}$ of Sprc$(R)$, such on each $U_i$ a tensor preserving isomorphism of vector bundles
\begin{equation*}
\mathbb{D}(X_{U_i})_{U_i}\xrightarrow{\sim}D\otimes_{\mathbb{Z}_p}\mathcal{O}_{U_i}
\end{equation*}
such that the Hodge filtration
\begin{equation*}
Fil^1(X_{U_i})\subset\mathbb{D}(X_{U_i})_{U_i}\xrightarrow{\sim}D\otimes_{\mathbb{Z}_p}\mathcal{O}_{U_i}
\end{equation*}
is induced by the cocharacter that is $G(U_i)$ conjugate to $\mu^{KS}$. 
\end{itemize}
Two such triples are equivalent if there exists a tensor preserving isomorphism between them commuting with the two quasi-isogenies.
\end{definition}

I refer to \cite{HP} for the definition of the big crystalline site CRIS$(\text{Spec}(R)/W)$. The above definition obviously works for any Hodge type Shimura varieties. One of their main results in \cite{HP} is that $RZ$ is representable by a locally formally finite type formal scheme.

Let $K_p=\text{GSpin}(V)(\mathbb{Z}_p)$ and $K_p'=\text{GSp}(C(V),\psi)(\mathbb{Z}_p)$. Also let $K^p\subset\text{GSpin}(V)(\mathbb{A}_f^p)$ and $K'^{p}=\text{GSp}(C(V),\psi)(\mathbb{A}_f^p)$ containing $K^p$. Then \ref{embd} induces an embedding of integral models
\begin{equation}
\mathscr{S}_{K_pK^p}\longrightarrow\mathscr{S}_{K'_pK'^{p}}
\end{equation}
This embedding of semi-global integral models determines a closed embedding\cite{HP}, proposition 3.2.11
\begin{equation}
RZ\longrightarrow RZ_{\text{GSp}_{C(V)}}
\end{equation}
This embedding provide the objects over $RZ$ one more structure, the polarization. Let $\lambda_0$ be the polarization defined by $\psi$ on the object $(X_0,(s_{\alpha,0}))$. Then the objects over in $RZ(R)$ for $R\in\text{ANilp}_W$ are 
\begin{equation*}
\rho:(X_0\otimes_k\overline{R},(s_\alpha),\lambda)\longrightarrow(X\otimes_R\overline{R},(s_{\alpha,0}),\lambda_0)
\end{equation*}
Then $\rho^\lor\circ\lambda\circ\rho=c^{-1}(\rho)\cdot\lambda_0$ for some $c(\rho)\in\mathbb{Q}_p^\times$. Based on $ord_p(c(\rho))$, $RZ$ is decomposed into open and closed sub formal schemes. Fix an integer $l$, let $RZ^{(l)}$ be the open and closed sub formal schemes on which $c(\rho)=p^l$. Call $RZ^{(l)}$ the component with multiplicator $p^l$. 

In this paper, I only consider the reduced locus of the Rapoport-Zink formal scheme, which are just schemes over $\overline{k}$, denoted by $RZ^{red}$ and $RZ^{red,(l)}$. For later use, I need the dimension of $RZ^{red}_b$ for all $[b]\in B(\text{GSpin}(V_{\mathbb{Q}_p}),\mu^{KS})$. The Newton cocharacter $\nu_b$s for all $[b]$ are needed. 

Since GSpin$(V)$ is the central extension of SO$(V)$, the following map
\begin{align*}
B(\text{GSpin}(V_{\mathbb{Q}_p}),\mu^{KS})&\longrightarrow B(\text{SO}(V_{\mathbb{Q}_p}),\mu)\\
b'&\longmapsto b
\end{align*}
induced by the surjection of GSpin onto SO is a bijection. 

Let's consider the $[b]$s for SO. Over $K$, using the anti-diagonal matrix as the metric matrix of the quadratic form. It is possible since SO splits over a nonramified extension of $\mathbb{Q}_p$
\[
\begin{bmatrix}
& & 1\\
& \ddots &\\
1 & &
\end{bmatrix}
\]
Then the Hodge cocharacter $\mu$ is
\[
\begin{bmatrix}
p & & & & \\
& 1 & & &\\
& & \ddots & &\\
& & & 1 &\\
& & & & p^{-1}
\end{bmatrix}
\]
the basic isocrystal is define by the class of the following $b$
\[
\begin{bmatrix}
 & & & & p\\
& 1 & & &\\
& & \ddots & &\\
& & & 1 &\\
p^{-1} & & & & 
\end{bmatrix}
\]
Since it squares to the identity matrix, its Newton cocharacter $\nu_b$
\begin{equation*}
(0,......,0)
\end{equation*}

Following the terminology of \cite{MP}, the Newton types between the $\mu$-ordinary and the basic are called finite height. The $b$s for the finite height isocrystal
\[
\begin{bmatrix}
& & & p & & & & & & & \\
1 & & & & & & & & & & \\
& \ddots & & & & & & & & &\\
& & 1 & & & & & & & &\\
& & & &1 & & & & & &\\
& & & & &\ddots & & & & &\\
& & & & & &1 & & & &\\
& & & & & & & & 1 & &\\
& & & & & & & & &\ddots &\\
& & & & & & & & & &1\\
& & & & & & & p^{-1} & & &
\end{bmatrix}
\]
where the size of the left upper and right lower matrices takes values between $2$ inclusive and $\lfloor dimV/2 \rfloor$. Let this size be $m$. Its Newton cocharacter is the following: $(\frac{1}{m},\dots,\frac{1}{m},0,\dots,0,-\frac{1}{m}\dots,-\frac{1}{m})$, since $b^m$ is the matrix
\[
\begin{bmatrix}
p & & & & & & & &\\
& \ddots & & & & & & &\\
& & p & & & & & &\\
& & &1 & & & & &\\
& & & & \ddots & & & &\\
& & & & & 1 & & &\\
& & & & & & p^{-1} & &\\
& & & & & & &\ddots &\\
& & & & & & & & p^{-1}
\end{bmatrix}
\]
With these data, the dimension of the Rapoport-Zink spaces can be computed using Rapoport's formula. Let $G$ be a reductive scheme over $\mathbb{Q}_p$, $K$ be the field as defined above, $J_b$ be the following $\mathbb{Q}_p$ group scheme
\begin{equation}\label{Kot}
J_b(R)=\{g\in G(K\otimes_{\mathbb{Q}_p}R): gb\sigma(g)^{-1}=b\}
\end{equation}
where $b\in B(G)$ and $R$ any $\mathbb{Q}_p$ algebra.
\begin{theorem}
(Zhu\cite{Zh}) The underlying reduced scheme of the Rapoport-Zink space associated with $(G,\mu,b)$ has dimension $\langle \mu-\nu_b,\rho\rangle-\frac{1}{2}def_G(b)$ in which $def_G(b)=rk_{\mathbb{Q}_p}G-rk_{\mathbb{Q}_p}J_b$.
\end{theorem}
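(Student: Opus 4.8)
The plan is to reduce the statement to a purely group-theoretic assertion about affine Deligne--Lusztig varieties and then invoke the dimension formula for those. First I would use Dieudonn\'e theory together with the Frobenius-invariant tensors $(s_{\alpha,0})$ to identify the underlying reduced scheme $RZ^{red}$ with the affine Deligne--Lusztig variety
\[
X_{\mu}(b)=\{\,g\in G(K)/G(W)\ :\ g^{-1}\,b\,\sigma(g)\in G(W)\mu(p)G(W)\,\}
\]
inside the Witt-vector affine Grassmannian of $G=\mathrm{GSpin}(V_{\mathbb{Q}_p})$ (or of a general unramified $G$) over $\overline{k}$; since the Hodge cocharacter $\mu^{KS}$ is minuscule one has $X_{\mu}(b)=X_{\le\mu}(b)$, so the whole content becomes the equality $\dim X_{\mu}(b)=\langle\mu-\nu_b,\rho\rangle-\tfrac12\,\mathrm{def}_G(b)$. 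That $X_\mu(b)$ is a genuine locally-of-finite-type scheme over $\overline k$ for which this dimension makes sense is Zhu's theory of mixed-characteristic affine Grassmannians; all subsequent steps take place there.

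Next I would run the standard Hodge--Newton reduction. If the pair $(\mu,[b])$ is Hodge--Newton decomposable with respect to a proper Levi $M\subset G$ containing the centralizer of $\nu_b$, then $X_\mu^G(b)$ is, up to the $J_b$-action and a discrete shift, a disjoint union of copies of $X_{\mu}^M(b_M)$ for a reduction $b_M$ of $b$ to $M$; inducting on the semisimple rank reduces to the Hodge--Newton indecomposable case, and a further Levi reduction puts the $GL$-factors in superbasic form. At this point one must separately verify the combinatorial identity $\langle\mu-\nu_b,\rho_G\rangle-\tfrac12\mathrm{def}_G(b)=\langle\mu-\nu_{b_M},\rho_M\rangle-\tfrac12\mathrm{def}_M(b_M)$, which follows from $\langle\nu_b,\rho_G-\rho_M\rangle=\langle\mu,\rho_G-\rho_M\rangle$ (valid because $\nu_b$ and $\mu$ are $M$-dominant-conjugate on the roots outside $M$) together with the behaviour of $\mathrm{rk}_{\mathbb{Q}_p}$ when passing from $G,M$ to $J_b,J_{b_M}$.

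The heart of the argument is the Hodge--Newton indecomposable (in particular the basic) case. There one stratifies $X_\mu(b)$ into locally closed pieces, each fibered with affine-space fibers over a classical Deligne--Lusztig variety attached to a parahoric of $J_b$, the strata indexed by the relevant data in $\mathrm{Adm}(\mu)$; the ``generic'' stratum has dimension exactly $\langle\mu-\nu_b,\rho\rangle-\tfrac12\mathrm{def}_G(b)$, and every other stratum is shown to be of strictly smaller dimension. Carrying out the dimension count on the generic stratum and, above all, the upper bound on all the others is the main obstacle: it rests on He's combinatorics of the admissible set and on the geometric Satake equivalence in mixed characteristic (used to control intersections with Schubert cells through the weight multiplicities of $V^{\mu}$), which is precisely the input supplied by the theorem of Zhu quoted here (building on Rapoport's conjecture and the earlier cases of Viehmann and Hamacher); I would cite it rather than reprove it. Finally, as a consistency check in our setting, the formula can be confirmed by hand for the short list of Newton types occurring for $\mathrm{GSpin}(V)$ --- the $\mu$-ordinary class (where it gives $0$), the finite-height classes with $\nu_b=(\tfrac1m,\dots,-\tfrac1m)$, and the basic class with $\nu_b=0$ --- using the explicit description of $J_b$ and of the supersingular Rapoport--Zink space due to Howard--Pappas.
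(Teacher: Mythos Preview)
The paper does not prove this theorem; it is quoted as a result of Zhu and then immediately \emph{applied} to the $(\mathrm{SO}(V),\mu,b)$ setting to compute the dimensions of the finite-height Rapoport--Zink spaces. There is therefore no ``paper's own proof'' to compare your proposal against.

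That said, your sketch is a faithful outline of how the dimension formula is actually established in the literature: the identification of $RZ^{red}$ with the affine Deligne--Lusztig variety $X_\mu(b)$ in the Witt-vector affine Grassmannian, Hodge--Newton reduction to the indecomposable/superbasic case, and the stratification by classical Deligne--Lusztig varieties with affine fibers. You are also right that the hard step --- the upper bound in the basic case via He's combinatorics and mixed-characteristic geometric Satake --- is exactly what Zhu supplies, and you correctly flag that one should cite rather than reprove it. For the purposes of this paper, the appropriate ``proof'' is simply a citation, which is what the author does; your final paragraph, checking the formula by hand against the explicit Newton types for $\mathrm{GSpin}(V)$, mirrors the computation the paper performs immediately after stating the theorem.
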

Applying this formula to SO, $\mu$ and any finite height $b$. Divide it into two subcases according to wheather $N=dim(V)=2n$ or $2n+1$. Using the basis of $V_K$ such that the quadratic form is antidiagonal. Let $\chi_i$ be the following character of the diagonal maximal torus
\begin{equation*}
diag(t_1,t_2,\dots,t_n,t_n^{-1},\dots,t_2^{-1},t_1^{-1})\longmapsto t_i
\end{equation*}
in the even case;
\begin{equation*}
diag(t_1,t_2,\dots,t_n,1,t_n^{-1},\dots,t_2^{-1},t_1^{-1})\longmapsto t_i
\end{equation*}
in the odd case.

The SO$(V)$ has positive roots:$\{\chi_i\pm\chi_j\}$ for $i < j$ in the even case and $\{\chi_i\pm\chi_j\}$ for $i < j \cup \{\chi_i\}$ for all $i$ in the odd case. Since $\mu=(1,0,\dots,0,1)$ and $\nu_b=(1/m,1/m,\dots,-1/m,-1/m)$. Then $\langle\mu,\rho\rangle=n - 1$ if $N=2n$ and $\langle\mu,\rho\rangle=(2n-1)/2$ if $N = 2n + 1$.

Now let's compute $\langle\nu_b,\rho\rangle$. If $N=2n$, $2\langle\nu_b,\rho\rangle=(n - m)\cdot(2/m)\cdot m+(2/m)\cdot m(m - 1)/2=2n-m-1$; If $N=2n+1$,$2\langle\nu_b,\rho\rangle=(n - m)\cdot(2/m)\cdot m+(2/m)\cdot m(m - 1)/2 + m\cdot(1/m)=2n-m$. Therefore in each case, $2\langle\mu-\nu_b,\rho\rangle=m - 1$. 

The remaining work to do is to compute the $def_G$. Let's define the upper left corner matrix of the matrices for $b$s be $b_s$. It is nothing but a matrix who defines the basic GL$_m$ isocrystal, so $J_{b_s}=D^*_{1/m}$ the unit group of the division algebra of Hasse invariant $1/m$. This group is known to have $\mathbb{Q}_p$-rank 1 (It is isotropic modulo center, this $1$ comes from the center). In all possible cases, i.e.odd split, even split and even quasi-split but not split, one has
\begin{equation*}
J_b=J_{b_s}\times\text{SO}(N-2m)
\end{equation*}
in which SO$(N-2m)$ has the same splitting property as SO$(N)$ but smaller size. Therefore $def_G$ is always $m-1$. 

Plug into the Rapoport's formula
\begin{equation*}
\langle\mu-\nu_b,\rho\rangle-\frac{1}{2}def_G=\frac{1}{2}[(m-1)-(m-1)]=0
\end{equation*}
Since GSpin is the central extension of  SO, i.e.
\begin{equation*}
1\longrightarrow\mathbb{G}_m\longrightarrow\text{GSpin}\longrightarrow\text{SO}\longrightarrow 1
\end{equation*}
one can get another central extension, for $b'$ over $b$
\begin{equation*}
1\longrightarrow\mathbb{G}_m\longrightarrow J_{b'}\longrightarrow J_b\longrightarrow 1
\end{equation*}
Therefore the Rapoport's formula for $b'$ and $\mu^{KS}$ would give the same number as that for the corresponding $b$ and $\mu$. The reduced locus of the finite height GSpin Rapoport-Zink spaces have dimension $0$. 

\subsection{Rapoport-Zink uniformization of the basic locus.}
It is possible to apply the Rapoport's formula to compute the dimension of the basic Rapoport-Zink space. Here I directly cite the results from \cite{HP}, which also includes the uniformization map.
\begin{theorem}\label{unif}
(Howard-Pappas\cite{HP}) The dimension of the underlying reduced scheme of the basic Rapoport-Zink space of GSpin$(N-2,2)$ Shimura variety with hyperspecial level structure is the following
\begin{enumerate}
\item $(N-3)/2$, if $N$ is odd (in this case GSpin$(V_{\mathbb{Q}_p})$ is always split at $p$, since I already assume it is quasi-split and there is no Dykin diagram automorphism in the odd case);
\item $(N-4)/2$, if $N$ is even and GSpin$(V_{\mathbb{Q}_p})$ is split at $p$;
\item $(N-2)/2$, if $N$ is even and GSpin$(V_{\mathbb{Q}_p})$ is quasi-split and non-split at $p$. 
\end{enumerate}
And there is the Rapoport-Zink uniformization map
\begin{equation}
\Theta_b:I(\mathbb{Q})\backslash RZ_b\times G(\mathbb{A}_f^p)/K\longrightarrow(\hat{\mathscr{S}}_W)_{\mathscr{S}_{\overline{k},b}}
\end{equation}
\end{theorem}
in which $I$ is an inner form of GSpin$(V_{\mathbb{Q}})$ such that $I_{\mathbb{Q}_l}$ is isomorphic to GSpin$(V_{\mathbb{Q}_l})$ and $I_{\mathbb{Q}_p}$ is the automorphism group of the basic isocrystal $J_b$ defined by Kottwitz, i.e. \ref{Kot}, which is an inner form of GSpin$(V_{\mathbb{Q}_p})$. 

More explicitly, given a $\overline{k}$ point of the Rapoport-Zink space, $(\rho:X_0\longrightarrow X)$, by multiplying $\rho$ by a large enough $p$-power, say $p^a$, one obtains a genuine isogeny: $p^a\cdot\rho:X_0\longrightarrow X$. Let $A_{x_0}$ be an abelian variety whose $p$-divisible group is isomorphic to $X_0$ and fix such an isomorphism, and fix a level structure of $A_0$. Then the kernel of this map $ker(p^a\cdot\rho)$ is a finite subgroup scheme of $A_{x_0}$. Let $A$ be the abelian variety $A_{x_0}/ker(p^a\cdot\rho)$. Then one gets a genuine isogeny of abelian varieties
\begin{equation*}
p^a\cdot\rho:A_{x_0}\longrightarrow A
\end{equation*}
Dividing this by $p^a$, it becomes a quasi-isogeny $\rho:A_{x_0}\longrightarrow A$ whose $p$-divisible groups recover the quasi-isogeny of $p$-divisible groups:$\rho:X_0\longrightarrow X$. Taking various kinds of cohomology functors of this quasi-isogeny of abelian varieties, and transfer the tensors, polarization and the level structure to $A$. Let them be $\lambda, (s_{et,\alpha},s_{dR,\alpha},s_{cris,\alpha}),\eta$. Then there is a map
\begin{align}
RZ_b\times G(\mathbb{A}_f^p)&\longrightarrow\mathscr{S}_{\overline{k},b}\\
((\rho:X_0\longrightarrow X),g)&\longmapsto(A,\lambda,(s_{et,\alpha},s_{dR,\alpha},s_{cris,\alpha}),\eta\circ g)
\end{align}
Then $I(\mathbb{Q})$ acts on the left: for $h\in I(\mathbb{Q})$, it moves $((\rho:X_0\longrightarrow X),g)$ to $((\rho\circ h^{-1}:X_0\longrightarrow X), h\cdot g)$. $K^p$ just acts on the $G(\mathbb{A}_f^p)$ by the right multiplication. Taking the quotient one gets $\Theta_b$. 

\begin{remark}
Even though here I only restate the uniformization map for the reduced locus of the basic locus, Kim\cite{Kim} constructed the map for all the Newton types and values in all $\overline{k}$-schemes $S$ on which $p$ is locally nilpotent. 
\end{remark}

\section{Congruence relation in case SO$(V)$ is split at $p$}\label{split}
In this section I extend B\"ultel's results in his paper\cite{B} to more general cases.

The ordinary congruence relation is already known. The next question to ask is whether this is enough to deduce the conjecture\ref{conj}, i.e.whether this implies $H_p(F)=0$ in $\mathbb{Q}[p-Isog_k]$. 

As in section 3, $cl\circ\sigma(H(X))$ deletes the terms in $H(X)$ with non $\mu$-ordinary coefficients. But if there are no such terms, $H(F)=0$ in $\mathbb{Q}[p-Isog_k^{ord}]$ implies it is also $0$ in $\mathbb{Q}[p-Isog_k]$. 

In \cite{B}, B\"ultel showed how to prove this for certain orthogonal Shimura varieties. The idea is basically showing the projection maps $s$ and $t$ are finite away from the basic locus. This excludes the possibility of the finite height coefficients; For the basic locus, in general neither $s$ nor $t$ is finite. However, if the dimension of the basic locus $\mathscr{S}_b$ is small enough, one can still exclude the possibility of basic coefficients in $cl\circ\sigma(H(X))$. So one just needs to show the finiteness of the projection away from the basic locus and the smallness of the dimension of the basic locus. 

Look at the fiber of the projection map. Since the source and the target are symmetric, I only talk about the source projection. Let $p-Isog^{(c)}_k$ be the components on which multiplicatior is $p^c$, as defined in section 3. Consider a $\overline{k}$ point of $\mathscr{S}_k$, $x_0:\text{Spec}\overline{k}\longrightarrow\mathscr{S}_k$. Let the Newton stratum of the image of $x_0$ has type $[b]$. The fiber over $x_0$ is just the fiber product over $\mathscr{S}_k$
\begin{equation}
\xymatrix{\text{Spec}\overline{k}\times_{\mathscr{S}_k} p-Isog^{(c)}_k \ar[r] \ar[d] & p-Isog^{(c)}_k \ar[d] \\
\text{Spec}\overline{k} \ar[r] & \mathscr{S}_k}
\end{equation}
Consider the functor of point of Spec$\overline{k}\times_{\mathscr{S}_k}p-Isog^{(c)}_k$ on a $\overline{k}$-scheme $S$. Denote the abelian variety corresponding to $x_0$ by $(A_0,(s_{cris,\alpha,0}),\lambda_0,\eta_0)$. Then one sees that $(\text{Spec}\overline{k}\times_{\mathscr{S}_k}p-Isog^{(c)}_k)(S)$ is the following data
\begin{itemize}
\item An $S$-point of $\mathscr{S}_k$, this defines an abelian scheme $(A,(s_{cris,\alpha}),\lambda,\eta)$ by pulling back the abelian scheme over $\mathscr{S}_k$.
\item A $p$-isogeny from $(A_0\times_kS, (s_{cris,\alpha,0}),\lambda_0,\eta_0)$, which is the trivial family over $S$ with fiber $A_0$, to $(A,(s_{cris,\alpha},\lambda,\eta)$, s.t.the multiplicator is $p^c$. And the level structure is preserved, i.e.$\eta=\eta_0\circ f^{-1}$. 
\end{itemize}
By taking their $p$-divisible groups, one gets a homomorphism from Spec$k\times_{\mathscr{S}_k}p-Isog_k^{(c)}$ to the reduced locus of the Rapoport-Zink space associated with $b$
\begin{align*}
\text{Spec}\overline{k}\times_{\mathscr{S}_k}p-Isog_k^{(c)}&\longrightarrow RZ^{red,(c)}_b\\
((A_0,(s_{cris,\alpha,0}),\lambda_0,\eta_0)\xrightarrow{f}(A,(s_{cris,\alpha}),\lambda,\eta))&\longmapsto(X_0\xrightarrow{f}X)
\end{align*}
This is fine because changing the base point $id:X_0\xrightarrow{\sim}X_0$ results in an automorphism of the Rapoport-Zink space. 
\begin{proposition}\label{inj}
This map is injective
\end{proposition}
\begin{proof}
The proof actually follows from the same reason in the construction of the uniformization map at the end of last section. Given $(X_0\xrightarrow{f}X)\in RZ_b^{red,(c)}(S)$. Then the kernel of this isogeny is a finite flat group scheme of $p$-power order. Then one can recover its preimage to be $A_0\longrightarrow A_0/ker(f)$ and transfer all the additional structures from $A_0$ to $A_0/ker(f)$. 
\end{proof}
As a corollary,
\begin{proposition}
For any GSpin Shimura varieties with hyperspecial level structure at $p$. regardless of the behavior of the orthogonal group over $\mathbb{Q}_p$, there is no cycle which is generically finite height appearing in the coefficients of $H(X)$. 
\end{proposition}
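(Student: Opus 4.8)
Since the coefficients of $H_p(X)$, regarded as elements of $\mathbb{Q}[p-Isog_k]$ via $\sigma\circ h$, lie in $A_d(p-Isog_k)$ by Proposition~\ref{dimofsp} (where $d=\dim\mathscr{S}_k=\dim Sh_{K_pK^p}$), and a class in $A_d$ is represented by a cycle of pure dimension $d$, it suffices to prove that \emph{every} irreducible component of $p-Isog_k$ of finite height Newton type has dimension strictly less than $d$: such a component then cannot occur in a degree-$d$ cycle class. This is why the statement is phrased as a corollary of Propositions~\ref{inject} and the Rapoport--Zink dimension count of Section~\ref{RZS}.

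So I fix an irreducible component $Z$ of $p-Isog_k$ of finite height Newton type $b$. Because the multiplicator is locally constant, $Z$ is contained in a single $p-Isog^{(c)}_k$. By Definition~\ref{stra} there is a dense open $U\subset Z$ with $s(U)\subset\mathscr{S}_{k,b}$, so $\dim Z=\dim U$ and $U\subset s^{-1}(\mathscr{S}_{k,b})$ inside $p-Isog^{(c)}_k$. Now for every geometric point $x_0$ of $\mathscr{S}_{k,b}$ the fibre $\operatorname{Spec}\overline{k}\times_{\mathscr{S}_k}p-Isog^{(c)}_k$ of the source map injects into $RZ^{red,(c)}_b$ by Proposition~\ref{inject}; and since $b$ is finite height, the computation of Section~\ref{RZS} (via Zhu's dimension formula) gives $\dim RZ^{red}_b=0$, hence $\dim RZ^{red,(c)}_b=0$, so each such fibre has dimension $0$. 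A morphism of finite type over a field all of whose fibres have dimension $0$ does not raise dimension, so $\dim U\le\dim\mathscr{S}_{k,b}$.

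It remains to observe that $\dim\mathscr{S}_{k,b}<d$ when $b$ is finite height: the $\mu$-ordinary locus $\mathscr{S}^{ord}_k$ is open and dense in the smooth, equidimensional scheme $\mathscr{S}_k$ of dimension $d$, and since $b$ lies strictly between $\mu$-ordinary and basic, $\mathscr{S}_{k,b}$ is contained in the proper closed complement $\mathscr{S}_k\setminus\mathscr{S}^{ord}_k$, so $\dim\mathscr{S}_{k,b}\le d-1$. Combining, $\dim Z=\dim U\le d-1<d$, which is the assertion. The only point that needs care — and it is not really an obstacle — is the passage ``the fibre injects into a $0$-dimensional scheme, hence is $0$-dimensional'': one should note, as is implicit in the proof of Proposition~\ref{inject} (where the source datum is literally reconstructed from the kernel of the isogeny), that the map to $RZ^{red,(c)}_b$ is a monomorphism, not merely injective on points, so that comparing dimensions through it is legitimate. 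Everything else is bookkeeping together with the known density of the $\mu$-ordinary locus and the finite-height Rapoport--Zink dimension computation already carried out above.
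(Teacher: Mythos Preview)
Your argument is correct and follows essentially the same route as the paper's proof: reduce via Proposition~\ref{dimofsp} to bounding the dimension of a finite-height component $Z$, then use Proposition~\ref{inject} together with the computation $\dim RZ^{red}_b=0$ to control the fibres of $s$, and conclude by comparing with the dimension of the non-ordinary locus. The only cosmetic differences are that the paper works with all of $Z$ and invokes Proposition~\ref{stra2} to place $s(Z)$ inside $\cup_{b'\le b}\mathscr{S}_{k,b'}$, then uses properness of $s$ (proper plus quasi-finite implies finite) to bound $\dim Z$, whereas you pass to the dense open $U$ mapping into the single stratum $\mathscr{S}_{k,b}$ and use the fibre-dimension inequality directly; both arrive at the same contradiction $\dim Z<d$.
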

\begin{proof}
As mentioned in section 3, since the coefficients of $H(X)$ all come from the specialization of the generic fiber, the are all of dimension $d = dim\mathscr{S}_k$. Take such a component $Z$. Suppose $Z$ is finite height of Newton type $[b]$. By proposition \ref{Sstrat}, $s(Z)$ is contained in $\cup_{b'\leq b}\mathscr{S}_{b'}$, whose dimension is strictly less than $d$. From the section, when $b$ is not basic, $dimRZ^{red}_b=0$. Therefore by \ref{inj}, $dim\text{Spec}\overline{k}\times_{\mathscr{S}_k}p-Isog_k^{(c)}=0$ over the non-basic locus. From the properness of the projection map $s$, $s$ is finite away from the basic locus. Therefore $dimZ\leq dim(\cup_{b'\leq b}\mathscr{S}_{b'})<dim\mathscr{S}_k=dimZ$ a contradiction. So such $Z$ does not exist. 
\end{proof}
\begin{theorem}
Let $V$ be a quadratic space of rank $N$ over $\mathbb{Q}$ such that its signature over $\mathbb{R}$ is $(N-2,2)$. Let $p$ be prime where $SO(V)$ is nonramified. When $N$ is odd or when $N$ is even and SO$(V)$ splits over $p$, the conjecture\ref{conj} holds.
\end{theorem}
\begin{proof}
The (generically) finite height coefficients in $\sigma(H(X))$. Take an irreducible component $Z$ of the coefficients of $\sigma(H(X))$, which is basic. From \ref{Sstrat}, $s(Z)$ is contained entirely in the basic locus. On the other hand, by \ref{inj} and \ref{unif}, the fiber of $s$ over $s(Z)$ is strictly less than half of the dimension of $\mathscr{S}_k$. Therefore $dimZ\leq dim\mathscr{S}_k=dimZ$, a contradiction.
\end{proof}

But there is one more case, i.e., the rank of $V$ is even and GSpin$(V_{\mathbb{Q}_p})$ quasi-split but nont split, it is still impossible to exclude the opportunity of the basic component in the coefficients of $\sigma(H(X))$. The explicit form of the Hecke polynomial is needed. 

\section{Congruence relation in case SO$(V)$ is quasi-split but not split at $p$}\label{quasi}
As seen in theorem\ref{unif}, when the orthogonal group SO$(V)$, or equivalently GSpin$(V)$, does not split at $p$, the dimension of the basic locus is exactly half of the dimension of the Shimura variety, so the simple dimension bounding argument in section\ref{shim} cannot exclude the possibility of the basic components in the coefficients of $\sigma(H(X))$. As a result, the ordinary congruence relation does not immediately imply the full version of the conjecture\ref{conj}. 

However, in\cite{K}, Koskivirta factored the Hecke polynomial so that one special factor 'kills' the basic cycles. I will show that the same phenomenon happens to quasi-split GSpin. 

Since I will only care about the basic cycles, $\mathscr{S}_{k,b}$ means the basic locus from now on. 

\subsection{Review of the root datum of GSpin}
First look at the root datum of the even split GSpin. Let $V_{\mathbb{Q}_p}$ be the quadratic space over $\mathbb{Q}_p$ as in section 5 and $dimV_{\mathbb{Q}_p}=2n$. The root datum of GSpin can be obtained from that of SO. A reference is Asgari's thesis\cite{As}.

SO$(V_{\mathbb{Q}_p})$, Spin$(V_{\mathbb{Q}_p})$ and GSpin$(V_{\mathbb{Q}_p})$ fit into the following diagram
\begin{equation*}
\xymatrix{\text{Spin}(V_{\mathbb{Q}_p})\times \mathbb{G}_m \ar[r] \ar[d] & \text{Spin}(V_{\mathbb{Q}_p}) \ar[d] \\
\text{GSpin}(V_{\mathbb{Q}_p}) \ar[r] & \text{SO}(V_{\mathbb{Q}_p})}
\end{equation*}
Let me explain the arrows: The upper horizontal one is just the projection onto the first factor; The right vertical one is the double cover; The left vertical one is defined by $(g,s)\longmapsto g\cdot s$, this is actually a surjection of group schemes, which defines GSpin$(V_{\mathbb{Q}_p})$ as a quotient of Spin$\times\mathbb{G}_m$ whose kernel is $\mu_2$; The lower horizontal arrow is just the right arrow in the exact sequence 
\begin{equation}
1\longrightarrow\mathbb{G}_m\longrightarrow\text{GSpin}(V_{\mathbb{Q}_p})\longrightarrow\text{SO}(V_{\mathbb{Q}_p})\longrightarrow 1
\end{equation}

Taking the matrix of the symmetric bilinear form of the quadratic space $V_{\mathbb{Q}_p}$ to be anti-diagonal, then a split torus $T$ can be chosen 
\begin{equation*}
T=\{diag(t_1,t_2,\dots,t_n,t_n^{-1},\dots,t_2^{-1},t_1^{-1}\}
\end{equation*}
and also a basis of the character lattice, and its dual basis of the cocharacter lattice
\begin{align*}
\chi_i:T\longrightarrow\mathbb{G}_m:diag(t_1,t_2,\dots,t_n,t_n^{-1},\dots,t_2^{-1},t_1^{-1})\longmapsto t_i\\
\chi_i^\lor:\mathbb{G}_m\longrightarrow T:t\longmapsto diag(1,\dots,t,1,\dots\dots,1,t^{-1},\dots,1)
\end{align*}
In this basis and its dual basis, a set of positive simple roots and coroots of SO is given by
\begin{align*}
R&=\{\alpha_1=\chi_1-\chi_2,\dots,\alpha_{n-1}=\chi_{n-1}-\chi_n,\alpha_n=\chi_{n-1}+\chi_n\}\\
R^\lor&=\{\alpha_1^\lor=\chi_1^\lor-\chi_2^\lor,\dots,\alpha_{n-1}^\lor=\chi_{n-1}^\lor-\chi_n^\lor,\alpha_n^\lor=\chi_{n-1}^\lor+\chi_n^\lor\}
\end{align*}
Let $\mathbb{Q}_{p^2}$ be the unique nonramified quadratic extension of $\mathbb{Q}_p$. To define the quasi-split outer form of SO$(V_{\mathbb{Q}_p})$, just take the Gal$(\mathbb{Q}_{p^2}/\mathbb{Q})$ action on the character and cocharacter lattice to be
\begin{equation*}
\sigma:\chi_n\longmapsto-\chi_n,\chi_n^\lor\longmapsto-\chi_n^\lor
\end{equation*}
for $\sigma$ the nontrivial element.

Take $\tilde{T}$ be a maximal torus of Spin surjecting to $T$. Then $\tilde{T}\times\mathbb{G}_m$ is a maximal split torus of Spin$(V_{\mathbb{Q}_p})\times\mathbb{G}_m$. Similarly take $T_\Delta$ to be the image of $\tilde{T}\times\mathbb{G}_m$ in GSpin which surjects to $T$
\begin{equation*}
\xymatrix{\tilde{T}\times\mathbb{G}_m \ar[r] \ar[d] & \tilde{T} \ar[d] \\
T_\Delta \ar[r] & T}
\end{equation*}
It is a maximal split torus of GSpin. Now taking the character lattices of the maximal tori corresponding to the diagram relating SO, Spin and GSpin, one gets the following diagram of lattices
\begin{equation*}
\xymatrix{X^*(\tilde{T})\times\mathbb{Z} & \ar[l] X^*(\tilde{T}) \\
\ar[u] X^*(T_\Delta) & \ar[u] \ar[l] X^*(T)}
\end{equation*}
Via the right vertical arrow, identify $X^*(\tilde{T})$ as a superlattice of $X^*(T)$ inside $X^*(T)_\mathbb{Q}$. In terms of the chosen basis, this lattice is given by
\begin{equation*}
X^*(\tilde{T})=\mathbb{Z}\chi_1+\dots+\mathbb{Z}\chi_n+\mathbb{Z}\cdot\frac{1}{2}(\chi_1+\dots+\chi_n)
\end{equation*}
Then $X^*\times\mathbb{Z}$ can be written as
\begin{equation*}
X^*(\tilde{T})=\mathbb{Z}\chi_1+\dots+\mathbb{Z}\chi_n+\mathbb{Z}\cdot\frac{1}{2}(\chi_1+\dots+\chi_n)+\mathbb{Z}\chi_0
\end{equation*}
Where $\chi_0$ corresponds to the projection to the $\mathbb{G}_m$ in the product Spin$(V_{\mathbb{Q}_p})\times\mathbb{G}_m$. This is because Spin$(V)$ is simply connected, so its coroot lattice coincide with its cocharacter lattice. Therefore its character lattice can be identified with the weight lattice of SO$(V_{\mathbb{Q}_p})$, which is generated by the cocharacter lattice of SO$(V_{\mathbb{Q}_p})$ together with the fundamental weight defining the half spin representation. 

Next let's try to identify the character lattice of GSpin as a sublattice of $X^*(\tilde{T})\times\mathbb{Z}$. According to Asgari\cite{As}, or Milne's book\cite{JM}, chapter 24, the left column of the first diagram fits into the following exact sequence
\begin{equation*}
1\longrightarrow\mu_2\longrightarrow\text{Spin}(V_{\mathbb{Q}_p})\times\mathbb{G}_m\longrightarrow\text{GSpin}(V_{\mathbb{Q}_p})\longrightarrow1
\end{equation*}
in which the second arrow is given by
\begin{equation*}
\mu_2\longrightarrow\mathbb{G}_m\xrightarrow{(\alpha^\lor_{n-1}+\alpha^\lor_n)\times id}\text{Spin}(V_{\mathbb{Q}_p})\times\mathbb{G}_m
\end{equation*}
So the character lattice of GSpin$(V_{\mathbb{Q}_p})$ as a sublattice of $X^*(\tilde{T})\times\mathbb{Z}$ are those vectors with integral values on $(1/2)\alpha_{n-1}^\lor+(1/2)\alpha_n^\lor$. This lattice is given by
\begin{equation*}
X^*(T_\Delta)=\mathbb{Z}\chi_1+\dots+\mathbb{Z}\chi_n+\mathbb{Z}\cdot(\chi_0+(1/2)\cdot(\chi_1+\cdot+\chi_n))
\end{equation*}
and its dual lattice is given by
\begin{equation*}
X_*(T_\Delta)=\mathbb{Z}\chi_0^\lor+\mathbb{Z}\cdot(\chi_0^\lor+(1/2)\cdot\chi_1^\lor)+\dots\dots+\mathbb{Z}\cdot(\chi_0^\lor+(1/2)\cdot\chi_n^\lor)
\end{equation*}
Like\cite{As}, in general people use another basis than $\chi$s. Let $e_0=\chi_0+(1/2)\cdot(\chi_1+\dots+\chi_n)$ and $e_i=\chi_i$. On the dual side, let $e_0^\lor=\chi_0^\lor$ and $e_i^\lor=\chi_i^\lor+(\chi_0^\lor)/2$. Rewrite the set of positive roots and coroots in this new basis
\begin{align*}
R&=\{\alpha_1=e_1-e_2,\dots,\alpha_n=e_{n-1}-e_n,\alpha_n=e_{n-1}+e_n\}\\
R^\lor&=\{\alpha_1^\lor=e_1^\lor-e_2^\lor,\dots,\alpha_{n-1}^\lor=e_{n-1}^\lor-e_n^\lor,\alpha_n^\lor=e_{n-1}^\lor+e_n^\lor-e_0^\lor\}
\end{align*}

Similar to SO$(V_{\mathbb{Q}_p})$, the quasi-split form of GSpin$(V_{\mathbb{Q}_p})$ is defined by the Galois action on the root datum
\begin{equation*}
\sigma:e_n\longmapsto e_0-e_n\text{ and }e_n^\lor\longmapsto e_0^\lor-e_n^\lor
\end{equation*}
in which $\sigma$ is the nontrivial involution of Gal$(\mathbb{Q}_{p^2}/\mathbb{Q})$. 

\subsection{The Hecke polynomial of quasi-split even GSpin Shimura varieties} 
In this section I compute the Hecke polynomial for $\mu^{KS}$. 
\begin{lemma}
In terms of the $e_i$s introduced right above, $\mu^{KS}=e_1^\lor$
\end{lemma}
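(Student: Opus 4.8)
The plan is to pin $\mu^{KS}$ down by the pair of invariants (its image in $\text{SO}(V_{\mathbb{Q}_p})$, its spinor similitude), which together determine any cocharacter of $\text{GSpin}(V_{\mathbb{Q}_p})$. Recall that the kernel of $\text{GSpin}(V_{\mathbb{Q}_p})\to\text{SO}(V_{\mathbb{Q}_p})$ is the central $\mathbb{G}_m$ of scalars, spanned on cocharacters by $\chi_0^{\lor}$, and that the spinor similitude character $N\colon\text{GSpin}(V_{\mathbb{Q}_p})\to\mathbb{G}_m$, $g\mapsto gg^{*}$, restricts to $t\mapsto t^{2}$ on these scalars. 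Hence the map $X_{*}(T_{\Delta})\to X_{*}(T)\oplus X_{*}(\mathbb{G}_m)$, $\nu\mapsto(\bar\nu,\,N\circ\nu)$, is injective: if $\bar\nu=0$ then $\nu\in\mathbb{Z}\chi_0^{\lor}$, on which $N$ is injective. So it suffices to compute both invariants for $\mu^{KS}$ and for $e_1^{\lor}$ and check that they agree.

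First I would compute the image of $\mu^{KS}$ in $X_{*}(T)$, where $T$ is the split maximal torus of $\text{SO}(V_{\mathbb{Q}_p})$ from Section~\ref{RZS}. By construction $\mu^{KS}$ is the Kuga--Satake lift of the cocharacter $\mu$ of $\text{SO}(V_{\mathbb R})$ fixed in Section~\ref{Shd}, so its image in $X_{*}(T)$ is $\mu$ itself, which in those coordinates is $\chi_1^{\lor}$. One also sees this directly from $\mu^{KS}(t)=t^{-1}e_1e_2+e_2e_1$: with $f=e_1e_2$, the Clifford relations for the hyperbolic pair $\{e_1,e_2\}$ make $f$ and $1-f$ orthogonal idempotents, so $\mu^{KS}(t)=(1-f)+t^{-1}f$, and conjugation by this element scales $e_1,e_2$ by inverse powers of $t$ and fixes $\langle e_1,e_2\rangle^{\perp}$, i.e.\ it acts on $V_{\mathbb{Q}_p}$ as $\pm\chi_1^{\lor}$. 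On the other side, $e_1^{\lor}=\chi_1^{\lor}+\tfrac{1}{2}\chi_0^{\lor}$ visibly maps to $\chi_1^{\lor}$ in $X_{*}(T)$, since $\chi_0^{\lor}$ dies in $\text{SO}(V_{\mathbb{Q}_p})$.

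Next I would compute the spinor similitudes. With $f=e_1e_2$ one has $f^{*}=e_2e_1=1-f$, hence $\mu^{KS}(t)^{*}=f+t^{-1}(1-f)$ and $N(\mu^{KS}(t))=\bigl((1-f)+t^{-1}f\bigr)\bigl(f+t^{-1}(1-f)\bigr)=t^{-1}$, which (after the sign normalisation of Section~\ref{Shd}) is $t$. For $e_1^{\lor}$: the character $N$ is trivial on $\text{Spin}(V_{\mathbb{Q}_p})$ and is $t\mapsto t^{2}$ on the central $\mathbb{G}_m$, so as an element of $X^{*}(T_{\Delta})$ it equals $2\chi_0=2e_0-\sum_{i=1}^{n}e_i$; pairing with $e_1^{\lor}$ gives $2\langle e_0,e_1^{\lor}\rangle-\langle e_1,e_1^{\lor}\rangle=2-1=1$, so $N(e_1^{\lor}(t))=t$ as well. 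By the injectivity above, $\mu^{KS}=e_1^{\lor}$, as claimed.

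The one point needing care — and the step I would flag as the real (if modest) obstacle — is keeping the sign and duality conventions consistent: the sign in the formula for $\mu^{KS}$ (which of $e_1,e_2$ is scaled by $t$ rather than $t^{-1}$), the choice of main anti-involution $*$ defining $N$, and the normalisation of $N$ on scalars. All three must be exactly the ones fixed in Sections~\ref{Shd} and \ref{RZS}, so that the two computations above land on the same cocharacter rather than on vectors differing by $\chi_0^{\lor}$; granting that, both identities are one-line Clifford-algebra computations and the injectivity statement finishes the proof.
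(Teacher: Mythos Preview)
Your proposal is correct and follows essentially the same approach as the paper: both arguments pin down $\mu^{KS}$ by its image in $X_*(T)$ (which is $\chi_1^{\lor}$, since $\mu^{KS}$ lifts $\mu$) together with its pairing against the similitude character (the paper writes $\eta(\mu^{KS}(t))=t$, you compute $N\circ\mu^{KS}$), and these two invariants determine the cocharacter uniquely. The only difference is presentational --- the paper writes $\mu^{KS}=\sum x_i e_i^{\lor}$ and solves for the $x_i$, whereas you compute the two invariants for $\mu^{KS}$ and for $e_1^{\lor}$ separately and invoke injectivity of $(\bar\nu,\,N\circ\nu)$; your caution about sign conventions is well placed but not an obstruction.
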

\begin{proof}
Write $\mu^{KS}$ as a linear span of the $e$s first. Let $\mu^{KS}=x_0e_0^\lor+x_1e_1^\lor+\dots+x_ne_n^\lor$. Since it is the lift of $\mu=\chi_1^\lor$, it must pair zeroly with $\chi_2,\dots,\chi_n$. Therefore $x_2=x_3=\dots=x_n=0$ but $x_1=1$. Since $\eta(\mu^{KS}(t))=t$ and $\eta=2\chi_0^\lor,2x_0+x_1=1$. Hence $x_0=0$. So $\mu^{KS}$ is nothing but $e_1^\lor$. 
\end{proof}
\begin{lemma}
All the weights appearing in the representation of GSO determined by $\mu^{KS}$ are
\begin{equation*}
e^\lor_1,e^\lor_2,\dots,e^\lor_n;e_0^\lor-e_1^\lor,e_0^\lor-e_2^\lor,\dots,e_0^\lor-e_n^\lor
\end{equation*}
\end{lemma}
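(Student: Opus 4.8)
The plan is to exploit the fact that $\mu^{KS}$ is minuscule. Being the Hodge cocharacter of a Shimura datum, $\mu^{KS}$ is automatically minuscule; alternatively, using the previous lemma $\mu^{KS}=e_1^{\lor}$ one checks immediately that $\langle\alpha,e_1^{\lor}\rangle\in\{-1,0,1\}$ for every root $\alpha=\pm e_i\pm e_j$ of $\hat G=\mathrm{GSO}(2n)$, and that $e_1^{\lor}$ pairs non-negatively with every simple coroot, so that it is a dominant minuscule weight. For such a weight the set of weights of the irreducible representation $V^{\mu^{KS}}$ (equivalently $r_{\mu^{KS}}$) is precisely the $W(\hat G)$-orbit of $\mu^{KS}$, each weight occurring with multiplicity one. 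Thus the lemma reduces to computing the orbit $W(\hat G)\cdot e_1^{\lor}$.

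To carry this out I would use the explicit root datum recalled in the previous subsection. The Weyl group $W(\hat G)$ is the Weyl group of type $D_n$ generated by the reflections $s_1,\dots,s_n$ attached to the coroots $\alpha_i^{\lor}=e_i^{\lor}-e_{i+1}^{\lor}$ for $1\le i\le n-1$ and $\alpha_n^{\lor}=e_{n-1}^{\lor}+e_n^{\lor}-e_0^{\lor}$. A direct application of the reflection formula shows that $s_1,\dots,s_{n-1}$ permute $e_1^{\lor},\dots,e_n^{\lor}$ and fix $e_0^{\lor}$, while $s_n$ sends $e_{n-1}^{\lor}\mapsto e_0^{\lor}-e_n^{\lor}$ and $e_n^{\lor}\mapsto e_0^{\lor}-e_{n-1}^{\lor}$ and fixes the remaining basis coweights. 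Hence $W(\hat G)$ acts on $\{e_1^{\lor},\dots,e_n^{\lor}\}$ by permutations together with an \emph{even} number of the involutions $e_i^{\lor}\leftrightarrow e_0^{\lor}-e_i^{\lor}$. Starting from $e_1^{\lor}$, permutations yield all the $e_i^{\lor}$, and performing one such involution (in a pair, then permuting back) yields all the $e_0^{\lor}-e_i^{\lor}$, so the orbit is exactly
\begin{equation*}
\{e_1^{\lor},\dots,e_n^{\lor}\}\cup\{e_0^{\lor}-e_1^{\lor},\dots,e_0^{\lor}-e_n^{\lor}\},
\end{equation*}
a set of $2n$ weights, as claimed. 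As a cross-check, this also identifies $r_{\mu^{KS}}$ with the standard $2n$-dimensional representation of $\mathrm{GSO}(2n)$: writing its maximal torus as $\mathrm{diag}(t_1,\dots,t_n,ct_n^{-1},\dots,ct_1^{-1})$ on $\overline{\mathbb{Q}}_l^{2n}$, the weights are the characters $t_i$ and $ct_i^{-1}$, which under the identification $e_0^{\lor}\leftrightarrow(g\mapsto c)$, $e_i^{\lor}\leftrightarrow(g\mapsto t_i)$ are exactly the $e_i^{\lor}$ and $e_0^{\lor}-e_i^{\lor}$, with highest weight $e_1^{\lor}=\mu^{KS}$.

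The computation is essentially routine; the one point requiring care is the bookkeeping in the non-standard basis $\{e_0^{\lor},\dots,e_n^{\lor}\}$. In particular, unlike the $\mathrm{SO}$ case, the "sign change" of the $D_n$-action here is not $e_i^{\lor}\mapsto -e_i^{\lor}$ but the shift $e_i^{\lor}\mapsto e_0^{\lor}-e_i^{\lor}$ by the similitude coweight $e_0^{\lor}$ — precisely the discrepancy between $\mathrm{GSpin}$ and $\mathrm{SO}$ — and it is this $e_0^{\lor}$ that produces the second half of the list of weights. It is also worth noting that, although the quasi-split non-split form is governed by the Galois action $\sigma\colon e_n^{\lor}\mapsto e_0^{\lor}-e_n^{\lor}$, this twist intervenes only later (in extending $r_{\mu^{KS}}$ to the $L$-group and forming the Hecke polynomial); the weight list of $r_{\mu^{KS}}$ as a $\hat G$-representation over $\overline{\mathbb{Q}}_l$ is insensitive to it, since $\sigma$ merely permutes the weights within the orbit just computed.
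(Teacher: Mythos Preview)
Your proof is correct and takes essentially the same approach as the paper: both use that $\mu^{KS}$ is minuscule to reduce the question to computing the Weyl orbit of $e_1^{\lor}$. The only difference is bookkeeping: you track the simple reflections explicitly to exhibit the orbit, whereas the paper verifies the listed $2n$ vectors lie in the orbit and then counts $|W|/|W_M|=2^n\cdot n!/(2^{n-1}\cdot(n-1)!)=2n$ to conclude there are no others.
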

\begin{proof}
Since $\mu^{KS}$ is miniscule, the weights are simply the weights appearing in the Weyl group orbit of it. It is easy to check that these weights all appear in the Weyl group orbit of $\mu^{KS}$. Let $W_M$ be the Weyl group of the centralizer $M$ of $\mu^{KS}$. The length of the Weyl group orbit is
\begin{equation*}
\frac{|W|}{|W_M|}=\frac{2^n\cdot n!}{2^{n-1}\cdot(n-1)!}=2n
\end{equation*}
So these are just all the weights appearing in the representation of GSO determined by $\mu^{KS}$. 
\end{proof}

Since the Hecke polynomial is $\sigma$-conjugate invariant, I only need to look at the maximal torus
\begin{equation*}
H(X)=det(X-p^{n-1}\cdot r_{\mu^{KS}}(\sigma\ltimes t))
\end{equation*}
in which $t\in\hat{T}_\Delta$, the dual torus of $T_\Delta$ , which is a maximal torus of GSO. The maximal torus $T_\Delta$ is a rank $n+1$ torus. I choose a splitting $T_\Delta=\mathbb{G}_m\times\dots\dots\times\mathbb{G}_m$, in which $e_i^\lor$ corresponds to $t\longmapsto(1,\dots,1,t,1,\dots,1)$. One has the splitting of $\hat{T}_\Delta$ dual to the above splitting and write it down in the matrix form
\begin{equation*}
\hat{T}_\Delta=\{diag(s,t_1,\dots\dots,t_n)\}
\end{equation*}
Therefore, viewing $e_i^\lor$ as characters of $\hat{T}_\Delta$
\begin{equation}
e_0^\lor(diag(s,t_1,\dots\dots,t_n))=s\text{ and }e_i^\lor(diag(s,t_1,\dots\dots,t_n))=t_i
\end{equation}
The Kuga-Satake cocharacter defines the GSO-module with the highest weight $e_1^\lor$. As computed above, all of the weights are known, so when restricted to this maximal torus, $r_{\mu^{KS}}(t)$ is just the diagonal matrix
\[
\begin{bmatrix}
 t_1 & & & & &\\
& \ddots & & & &\\
& & t_n & & &\\
& & & st_n^{-1} & & &\\
& & & & \ddots &\\
& & & & & st_1^{-1}
\end{bmatrix}
\]
But I need the matrix of $r_{\mu^{KS}}(\sigma\ltimes t)$. The effect of $\sigma$ is changing the diagonal matrix a little bit, $r_{\mu^{KS}}(\sigma\ltimes t)$ is the following matrix, following Wedhorn\cite{W},2.7.1
\[
\begin{bmatrix}
p^{1-n}\cdot t_1 & & & & & &\\
& p^{2-n}\cdot t_2 & & & & &\\
& & \ddots & & & &\\
& & & & t_n & &\\
& & & st_n^{-1} & & &\\
& & & & \ddots & &\\
& & & & & p^{2-n}\cdot st_2^{-1} & \\
& & & & & & p^{1-n}\cdot st_1^{-1}
\end{bmatrix}
\]
Therefore the Hecke polynomial is 
\begin{equation}\label{hkp}
H(X)=(X^2-p^{2(n-1)}\cdot s)(X-t_1)(X-st_1^{-1})\dots\dots(X-p^{n-2}t_{n-1})(X-p^{n-2}st_{n-1}^{-1})
\end{equation}
Since both the first and the second factor are invariant under $\sigma$, they can be viewed as polynomials with coefficients in the Hopf algebra of the maximal split torus of the quasi-split GSO. The first factor is pretty simple. This simple factor kills the basic cycles in the same way as in Koskivirta\cite{K}. Notice that the cocharacter of $T_\Delta$ corresponding to $s$ is just the cocharacter in the exact sequence
\begin{equation*}
1\longrightarrow\mathbb{G}_m\longrightarrow\text{GSpin}(V_{\mathbb{Q}_p})\longrightarrow\text{SO}(V_{\mathbb{Q}_p})\longrightarrow 1
\end{equation*}
So viewed as an element of $K_p\backslash\text{GSpin}(V_{\mathbb{Q}_p})(\mathbb{Q}_p)/K_p$, it is $K_ppK_p$.

\subsection{Ideas of the proof}\label{ideas} Denote the product of the terms in $H(X)$ other than $(X^2-p^{2(n-1)}\cdot s)$ by $R(X)$ for short. Let $R(X)=a_mX^m+a_{m-1}X^{m-1}+\dots+a_0$. Where $a_i$s are elements in $\mathbb{Q}[p-Isog_k]$, which all have dimension $d=dim\mathscr{S}_k$. Now for each $a_i$ write it as
\begin{equation*}
a_i=b_i+c_i
\end{equation*}
where $b_i$s are supported on the cycles of $p-Isog_k$ who are generically ordinary. In other words, $b_i\in cl(\mathbb{Q}[p-Isog_k^{ord}])$. The following lemma is needed
\begin{lemma}
The following equation holds
\begin{equation*}
\sigma{H(X)}=\sigma((X^2-p^{2(n-1)}\langle p\rangle)\cdot R(X))
\end{equation*}
That is, the factorization of $H(X)$ commutes with $\sigma$. 
\end{lemma}
\begin{proof}
Specializing $(X^2-p^{2(n-1)}\cdot\langle p\rangle)\cdot R(X)$ , one gets
\begin{align*}
&\sigma((X^2-p^{2(n-1)}\langle p\rangle)\cdot (a_mX^m+a_{m-1}X^{m-1}+\dots+a_0))\\
&=\sigma(a_mX^{m+2}+a_{m-1}X^{m+1}+(a_{m-2}+p^{2(n-1)}\langle p\rangle\cdot a_m)\cdot X^m+\dots\dots-p^{2(n-1)}\langle p\rangle\cdot a_0)
\end{align*}
So the key to prove the lemma is trying to prove $\sigma(\langle p\rangle\cdot a_i)=\sigma(\langle p\rangle)\sigma(a_i)$. First observe that one only needs to prove this equality holds for any irreducible component of $p-Isog_{\mathbb{Q}}$. Take one such component, say $C$. By definition, $\langle p\rangle\cdot C$ is the image of $\langle p\rangle\times_{t,Sh,s}C$ under the map
\begin{equation*}
p-Isog_\mathbb{Q}\times_{t,Sh,s}p-Isog_\mathbb{Q}\longrightarrow p-Isog_\mathbb{Q}
\end{equation*} 
Similarly, $\sigma(\langle p\rangle)\cdot\sigma(C)$ is the image of $\langle p\rangle\times_{t,\mathscr{S}_k,s}\sigma(C)$ under the above map on the special fiber. If one can prove $\sigma(\langle p\rangle\times_{t,Sh,s}C)=\langle p\rangle\times_{t,\mathscr{S}_k,s}\sigma(C)$ as cycles of $p-Isog_k\times_{t,\mathscr{S}_k,s}p-Isog_k$, it is done. For this purpose, one only needs to check they have the same multiplicity on the same support. Consider the map
\begin{equation*}
p-Isog_\mathbb{Q}\times_{t,Sh,s}p-Isog_\mathbb{Q}\longrightarrow p-Isog_\mathbb{Q}
\end{equation*}
given by dropping the multiplication by $p$, i.e.
\begin{equation*}
(A_1\xrightarrow{\times p}A_1\xrightarrow{f}A_2)\longmapsto(A_1\xrightarrow{f}A_2)
\end{equation*}
This map admits a section given by reversing it. Taking specialization, the multiplicity of $\sigma(\langle p\rangle\times_{t,Sh,s}C)$ on its support is the same as the multiplicity of $\sigma(C)$ on its support in $p-Isog_k$. Using the same section on the special fiber proves that the multiplicity of $\langle p\rangle\times_{}\sigma(C)$ is the same as the multiplicity of $\sigma(C)$ on its support. Therefore, $\sigma(\langle p\rangle\times_{t,Sh,s}C)$ and $\langle p\rangle\times_{t,\mathscr{S}_k,s}\sigma(C)$ have the same multiplicity on its support respectively. Since their supports are both the image under the section of forgetting multiplication by $p$, they are the same cycle in $p-Isog_k\times_{t,\mathscr{S}_k,s}p-Isog_k$.
\end{proof}

With this lemma, I can proceed. One has
\begin{equation*}
cl\circ ord(H(X))=(X^2-p^{2(n-1)}\cdot\langle p\rangle)(b_mX^m+b_{m-1}X^{m-1}+\dots+b_0)
\end{equation*}
Then $H(X)$, viewed as a polynomial with coefficients in $\mathbb{Q}[p-Isog_k]$, can be written as the sum of two parts
\begin{equation}\label{twoparts}
H(X)=(X^2-p^{2(n-1)}\cdot\langle p\rangle)\cdot(c_mX^m+\dots+c_0)+cl\circ ord(H(X))
\end{equation}
According to the ordinary congruence relation, $cl\circ ord(H(X))=0$. So I only have to prove the vanishing of the first summand on $F$. For dimension reasons and the finiteness of the source projection over the non-basic locus, all the $c_i$s are supported on the basic cycles of $p-Isog_k$. Therefore it only needs to prove 
\begin{equation}\label{vanish}
(F^2-p^{2(n-1)}\cdot\langle p\rangle)\cdot C=0
\end{equation}
By Koskivirta\cite{K}, proposition 25, $F$ and $\langle p\rangle$ are in the center of $\mathbb{Q}[p-Isog_k]$. So to prove the lemma, one needs to show that $C\cdot F^2$ and $p^{2(n-1)}C\cdot\langle p\rangle$ have the same support and multiplicity in $\mathbb{Q}[p-Isog_k]$. 

To check they have the same support, first assume that 
\begin{equation*}
p-Isog_{k,b}\xrightarrow{(s,t)}\mathscr{S}_{k,b}\times\mathscr{S}_{k,b}
\end{equation*}
is a closed immersion where I use $p-Isog_{k,b}$ and $\mathscr{S}_{k,b}$ for the basic locus. In this case any $C$ where $dimC=dim\mathscr{S}_k$ is bijective onto its image. So for any two such components $C_1$ and $C_2$, to check that they are the same, one just has to show that $s(C_1)=s(C_2)$ and $t(C_1)=t(C_2)$, since $(s,t)(C_i)=s(C_i)\times_kt(C_i)$ for $i=1,2$ by dimension considerations.
\begin{proposition}\label{sandt}
The condition that $(s,t)$  is a closed immersion can always be achieved by taking the level structure small enough.
\end{proposition}
Look at Koskivirta\cite{K}, theorem 19. He also proved that once $C\cdot F^2=p^{2(n-1)}C\cdot\langle p\rangle$ for the case where $K^p$ is small enough, the general cases can also be deduced. Let $K'=K_pK'^p\subset K=K_pK^p$. Let $\mathscr{S}_k'$ and $\mathscr{S}_k$ be the special fiber for $Sh_{K'}$ and $Sh_K$. Also let $p-Isog'_k$ and $p-Isog_k$ be the $p$-isogeny spaces defined as above for $K'$ and $K$. There is an \'etale cover
\begin{align*}
\pi:\mathscr{S}'_k&\longrightarrow\mathscr{S}_k\\
\Pi:p-Isog_k'&\longrightarrow p-Isog_k
\end{align*}
which induces algebra homomorphisms
\begin{equation*}
\Pi:\mathbb{Q}[p-Isog_k']\longrightarrow\mathbb{Q}[p-Isog_k]
\end{equation*}
Koskivirta\cite{K}, lemma 27 proved
\begin{proposition}
There are quations
\begin{align*}
\Pi_*[\langle p\rangle]&=deg(\pi)[\langle p\rangle]\\
\Pi_*[F]&=deg(\pi)[F]
\end{align*}
So $H(F)=0$ in $\mathbb{Q}[p-Isog_k']$ implies $H(F)=0$ in $\mathbb{Q}[p-Isog_k]$.
\end{proposition}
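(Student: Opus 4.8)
\emph{Proof strategy.} Write $F'$ and $\langle p\rangle'$ for the analogues of $F$ and $\langle p\rangle$ attached to the smaller level $K'=K_pK'^p$, i.e.\ the images of the sections $\phi',\phi_{\langle p\rangle}':\mathscr{S}_k'\to p-Isog_k'$ of the source projection $s'$ sending an abelian scheme to its relative Frobenius, respectively to itself with its level structure multiplied by $p$. The first point I would record is that these sections are compatible with the covers $\pi:\mathscr{S}_k'\to\mathscr{S}_k$ and $\Pi:p-Isog_k'\to p-Isog_k$: the squares
\[
s\circ\Pi=\pi\circ s',\qquad \Pi\circ\phi'=\phi\circ\pi,\qquad \Pi\circ\phi_{\langle p\rangle}'=\phi_{\langle p\rangle}\circ\pi
\]
commute. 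The first identity is immediate from the definition of $\Pi$, since forgetting a $K'^p$-level structure down to a $K^p$-level structure commutes with passing to the source abelian scheme; the other two hold because forming the relative Frobenius (resp.\ multiplying the level structure by $p$) commutes with that forgetful operation, and because in both cases the target level structure is the one induced from the source (the proposition in \S\ref{twosections}), while the crystalline tensors and the polarization are transported functorially. The analogous squares commute on the generic fibres and on $p-Isog_E'\to p-Isog_E$.

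Next I would identify $F'$ and $\langle p\rangle'$ with pullbacks of $F$ and $\langle p\rangle$. From the commuting squares we get $F'=\phi'(\mathscr{S}_k')\subseteq\Pi^{-1}(F)$; conversely a geometric point of $p-Isog_k'$ lying over a point of $F$ is a $p$-isogeny whose underlying $K^p$-datum is a relative Frobenius, so the isogeny is $Fr$, the target is the Frobenius twist of the source, and the target $K'^p$-level structure is forced by the requirement that the isogeny preserve level structures; hence $\Pi^{-1}(F)=F'$ on points, and since $\Pi$ is \'etale and $F$ is reduced this is an equality of reduced closed subschemes, so $\Pi^{*}[F]=[F']$ in $Z_{\mathbb{Q}}(p-Isog_k')$ (an \'etale pullback carries no multiplicities), and likewise $\Pi^{*}[\langle p\rangle]=[\langle p\rangle']$. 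Since $\phi,\phi'$ are sections of the compatible source projections, $s:F\xrightarrow{\ \sim\ }\mathscr{S}_k$ and $s':F'\xrightarrow{\ \sim\ }\mathscr{S}_k'$ are isomorphisms under which $\Pi|_{F'}:F'\to F$ becomes $\pi:\mathscr{S}_k'\to\mathscr{S}_k$, a finite \'etale cover of degree $\deg(\pi)$; thus $\Pi|_{F'}$ is finite locally free of constant rank $\deg(\pi)$ onto the reduced scheme $F$, and the pushforward of the fundamental cycle along such a morphism is $\deg(\pi)$ times the fundamental cycle. Hence $\Pi_{*}[F']=\deg(\pi)\,[F]$, and identically $\Pi_{*}[\langle p\rangle']=\deg(\pi)\,[\langle p\rangle]$; this is the first assertion. (Equivalently, combine $\Pi^{*}[F]=[F']$ with the projection formula $\Pi_{*}\circ\Pi^{*}=\deg(\pi)\cdot\mathrm{id}$.)

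For the implication I would work through the flat pullback $\Pi^{*}:\mathbb{Q}[p-Isog_k]\to\mathbb{Q}[p-Isog_k']$, which is a ring homomorphism for the fibre-product product. The Hecke polynomial $H_p(X)\in\mathscr{H}(G(\mathbb{Q}_p)//K_p)[X]$ is the same for $K$ and for $K'$, and its coefficients are the same fixed $\mathbb{Q}$-polynomials in the cycles $\sigma\circ h(T)$, respectively $\sigma\circ h'(T)$, as $T$ ranges over a basis of the spherical Hecke algebra. Because the relative position of a $p$-isogeny is unchanged under forgetting $K'^p$ to $K^p$, the same bookkeeping as above gives $\Pi_E^{*}\big(h(T)\big)=h'(T)$ on the generic fibres, and combined with the compatibility of the specialization map with flat pullback along $\Pi$ this yields $\Pi^{*}\big(\sigma\circ h(T)\big)=\sigma\circ h'(T)$ for all $T$; hence $\Pi^{*}\big(H_p(F)\big)=H_p(F')$. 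If $H_p(F')=0$ in $\mathbb{Q}[p-Isog_k']$, then by the projection formula $\deg(\pi)\,H_p(F)=\Pi_{*}\Pi^{*}\big(H_p(F)\big)=\Pi_{*}\big(H_p(F')\big)=0$, and since $\deg(\pi)$ is a unit in $\mathbb{Q}$ we get $H_p(F)=0$ in $\mathbb{Q}[p-Isog_k]$.

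The steps I expect to require real care — and the main technical point — are the scheme-theoretic identity $\Pi^{-1}(F)=F'$ (where one must use that $\Pi$ is \'etale, not merely count geometric points) and, in the last paragraph, the compatibility of the specialization map $\sigma$ with flat pullback along the \'etale covers $\Pi$ and $\Pi_E$, together with the verification that $\Pi^{*}$ intertwines the two realizations $\sigma\circ h$, $\sigma\circ h'$ of the Hecke coefficients; both come down to unwinding the definitions and applying standard properties of flat pullback and of scheme-theoretic closure (cf.\ Fulton \cite{F}), and are exactly the verifications carried out by Koskivirta \cite{K}.
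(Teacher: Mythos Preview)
The paper does not prove this proposition; it simply records the statement and cites Koskivirta \cite{K}, lemma 27. Your proposal therefore cannot be compared to a proof in the paper, but it is a correct and carefully laid-out reconstruction of the argument. The two key observations you isolate --- that the sections $\phi',\phi'_{\langle p\rangle}$ sit in Cartesian squares over $\phi,\phi_{\langle p\rangle}$ so that $\Pi|_{F'}$ is identified with $\pi$ (giving the pushforward formulas), and that $\Pi^*$ is a ring homomorphism intertwining the two realizations $\sigma\circ h$, $\sigma\circ h'$ of the Hecke coefficients (giving the implication via the projection formula $\Pi_*\Pi^*=\deg(\pi)\cdot\mathrm{id}$) --- are exactly the right ones, and your remark that the residual verifications come down to standard flat base change for the \'etale cover $\Pi$ and compatibility of specialization with flat pullback is accurate.

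One small comment: the paper asserts that $\Pi$ induces an \emph{algebra} homomorphism $\mathbb{Q}[p\text{-}Isog_k']\to\mathbb{Q}[p\text{-}Isog_k]$, which is presumably $\Pi_*$, but the multiplicativity of $\Pi_*$ for the convolution product is not obvious and in fact fails without a normalization. Your choice to run the implication through $\Pi^*$ instead is the cleaner route: the Cartesian square
\[
\begin{tikzcd}
p\text{-}Isog_k'\times_{t',s'}p\text{-}Isog_k' \ar[r,"c'"] \ar[d] & p\text{-}Isog_k' \ar[d,"\Pi"]\\
p\text{-}Isog_k\times_{t,s}p\text{-}Isog_k \ar[r,"c"] & p\text{-}Isog_k
\end{tikzcd}
\]
(which you implicitly invoke) together with flat base change gives $\Pi^*(Z_1\cdot Z_2)=\Pi^*Z_1\cdot\Pi^*Z_2$ directly.
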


It is enough to prove the conclusion \ref{vanish} over $\overline{k}$. Choose $K^p$ small enough so that $(s,t)$ is a closed immersion. Let $C$ be an irreducible component of $p-Isog_{\overline{k}}$. To compare $C\cdot F^2$ and $C\cdot\langle p\rangle$, look at their images under the source and target projections.
\begin{lemma}
Let $C$ be a basic irreducible component of $p-Isog_{\overline{k}}$. Then support of $s(C)$ and $t(C)$ are irreducible components $\mathscr{S}_{\overline{k},b}$. 
\end{lemma}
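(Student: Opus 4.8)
The plan is to pin down $\dim s(C)$ and $\dim t(C)$ by squeezing them between a lower bound coming from the fibre--dimension inequality and an upper bound coming from the dimension of the basic locus, and to observe that in the quasi-split non-split case these two bounds agree exactly.

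First I would record that the component $C$ we care about has dimension $d=\dim\mathscr{S}_{\overline{k}}=N-2$ --- this is forced by Proposition \ref{dimofsp} for the components appearing in the coefficients of $\sigma(H_p(X))$ --- and that $C$ lies in a single $p-Isog^{(c)}_{\overline{k}}$. Since $C$ is supersingular in the sense of Definition \ref{stra}, Proposition \ref{stra2} gives $s(C)\subseteq\mathscr{S}_{\overline{k},b}$ with $b$ the basic class; moreover $s(C)$ is closed and irreducible, since the restriction of $s$ to $p-Isog^{(c)}_{\overline{k}}$ is proper over $\mathscr{S}$ and $C$ is irreducible.

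Next I would observe that for any $\overline{k}$-point $x_{0}$ of $s(C)$, which then has basic Newton type, the fibre $s^{-1}(x_{0})\cap C$ is a closed subscheme of $\mathrm{Spec}\,\overline{k}\times_{\mathscr{S}_{\overline{k}}}p-Isog^{(c)}_{\overline{k}}$, and by Proposition \ref{inject} this last scheme injects into $RZ^{red,(c)}_{b}$, an open and closed subscheme of $RZ^{red}_{b}$. Hence every fibre of $s|_{C}$ has dimension at most $\dim RZ^{red}_{b}=\tfrac{N-2}{2}$ by Theorem \ref{HP}(3). The fibre--dimension inequality then yields $\dim s(C)\ge\dim C-\tfrac{N-2}{2}=\tfrac{N-2}{2}$. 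On the other hand, by Rapoport--Zink uniformization (Theorem \ref{HP}) the basic locus $\mathscr{S}_{\overline{k},b}$ has dimension $\dim RZ^{red}_{b}=\tfrac{N-2}{2}$, so $\dim s(C)\le\tfrac{N-2}{2}$ as well. Thus $\dim s(C)=\dim\mathscr{S}_{\overline{k},b}$, and a closed irreducible subset of $\mathscr{S}_{\overline{k},b}$ of maximal dimension is an irreducible component of it. The target projection is handled in exactly the same way: $t$ is also proper on each $p-Isog^{(c)}_{\overline{k}}$, and the source--target symmetry (equivalently the obvious analogue of Proposition \ref{inject} with $t$ in place of $s$) reduces the bound on its fibres to the case already done.

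The main obstacle is precisely that there is no slack: only in the quasi-split non-split case does $\dim RZ^{red}_{b}$ equal exactly half of $\dim\mathscr{S}_{\overline{k}}$, so the lower and upper estimates for $\dim s(C)$ meet only on the nose. One therefore has to be careful that \emph{every} fibre of $s|_{C}$, not merely the generic one, is controlled by $RZ^{red}_{b}$ --- which is exactly the content of Proposition \ref{inject} --- and that passing to the multiplicator component $RZ^{red,(c)}_{b}$ does not increase the dimension, which is clear since it is open and closed in $RZ^{red}_{b}$.
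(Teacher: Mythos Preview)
Your proof is correct and follows essentially the same route as the paper's: bound the fibre dimension of $s$ (and $t$) over the basic locus by $\dim RZ^{red}_b=n-1$ via Proposition \ref{inject}, apply the fibre--dimension inequality to get $\dim s(C)\geq n-1$, and combine with the upper bound $\dim\mathscr{S}_{\overline{k},b}=n-1$ to force equality. You are in fact more careful than the paper in explicitly recording that the argument needs $\dim C=2n-2$ (the paper's lemma statement omits this hypothesis but the proof silently uses it), and in noting that properness of $s$ is what makes $s(C)$ closed.
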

\begin{proof}
In the last section, the fiber of $s$ and $t$ can be embedded into the Rapoport-Zink space, which is of dimension $n-1$. Therefore the fibers $s$ and $t$ have dimension less than or equal to $n-1$, so $s(C)$ and $t(C)$ have dimension at least $n-1$. Since they are both contained in the basic locus of $\mathscr{S}_{\overline{k}}$. They must exactly have dimension $n-1$. So they are both irreducible components of $\mathscr{S}_{\overline{k},b}$. 
\end{proof}

Take any geometric point of $C\times_{t,\mathscr{S}_{\overline{k},b},s}\langle p\rangle$, it corresponds to a tuple
\begin{equation}\label{pt}
(A_1,(s_{cris,\alpha,1}),\lambda_1,\eta_1)\xrightarrow{f}(A_2,(s_{cris,\alpha,2}),\lambda_2,\eta_2)\xrightarrow{\times p}(A_2,(s_{cris,\alpha,2}),\lambda_2,p\cdot\eta_2)
\end{equation}
such that $f$ preserves the level structure. Its image under the source map is $(A_1,(s_{cris,\alpha,1}),\lambda_1,\eta_1)$, therefore $s(C\cdot\langle p\rangle)$ is just $s(C)$. Similarly, $s(C\cdot F^2)=s(C)$. So $s(C\cdot\langle p\rangle)=s(C\cdot F)$. 

Next I need to compare $t(C\cdot\langle p\rangle)$ and $t(C\cdot F^2)$. Take a geometric point \ref{pt} on $C\times_{t,\mathscr{S}_{\overline{k},b},s}\langle p\rangle$. Its image under the target projection is $(A_2,\dots,p\cdot\eta_2)$. Similarly, a geometric point on $(C\times_{t,\mathscr{S}_{\overline{k},b},s}F)\times_{t,\mathscr{S}_{\overline{k},b},s}F$ corresponds to a tuple
\begin{align*}
(A_1,(s_{cris,\alpha,1}),\lambda_1,\eta_1)&\xrightarrow{f}(A_2,(s_{cris,\alpha,2}),\lambda_2,\eta_2)\\
&\xrightarrow{F}(A_2^{(p)},(s^{(p)}_{cris,\alpha,2}),\lambda_2^{(p)},\eta_2^{(p)})\xrightarrow{F}(A_2^{(p^2)},(s^{(p^2)}_{cris,\alpha,2}),\lambda_2^{(p^2)},\eta_2^{(p^2)})
\end{align*}
So its image under the projection map is $(A_2^{(p^2)},(s^{(p^2)}_{cris,\alpha,2}),\lambda_2^{(p^2)},\eta_2^{(p^2)})$. There is just one question yet to be answered: Are $(A_2,(s_{cris,\alpha,2}),\lambda_2,\eta_2)$ and  $(A_2^{(p^2)},(s^{(p^2)}_{cris,\alpha,2}),\lambda_2^{(p^2)},\eta_2^{(p^2)})$ in the same irreducible component of $\mathscr{S}_{\overline{k},b}$? To compare them, make use of the Rapoport-Zink uniformization map \ref{unif}. The finer structure of the Rapoport-Zink space is also needed. 

\subsection{More about Rapoport-Zink uniformization, d'apres Howard-Pappas}
Recall the Rapoport-Zink uniformization map
\begin{equation}
\Theta:I(\mathbb{Q})\backslash RZ^{red}_b\times G(\mathbb{A}_f^p)/K^p\longrightarrow\mathscr{S}_{\overline{k},b}
\end{equation}
The strategy to prove $t(C\cdot\langle p\rangle)=t(C\cdot F^2)$ is: Pick up a geometric point $\mathscr{S}_{\overline{k},b}$ corresponding to $(A,(s_{cris,\alpha}),\lambda,\eta)$ such that it lies on a unique irreducible component of $\mathscr{S}_{\overline{k},b}$. Then find points on $RZ_b^{red}\times G(\mathbb{A}_f^p)$ sitting over $(A,(s_{cris,\alpha}),\lambda,p\cdot\eta)$ and $(A^{(p^2)},s_{cris,\alpha}^{(p^2)},\lambda^{(p^2)},\eta^{(p^2)})$ respectively, trying to prove that these two points should map to the same irreducible components under the Rapoport-Zink uniformization map. 

Take $x_0$ to be a $\overline{k}$ point of $\mathscr{S}_{\overline{k},b}$. Let $(A_0,(s_{cris,\alpha,0}),\lambda_0,\eta_0)$ be its corresponding abelian variety. Recall the construction of $\Theta$ in section (5.3). Fix an isogeny $\rho:A_0\longrightarrow A$. Let $(X_0\xrightarrow{\rho}X)$ be the induced map on their $p$-divisible groups. 
\begin{lemma}
With all these notations, we have
\begin{align}
\Theta: (X_0\xrightarrow{\rho}X, \eta^{-1}_0\circ\rho^{-1}\circ\eta)&\longmapsto(A,(s_{cris,\alpha}), \lambda,\eta)\label{eq}\\
\Theta: (X_0\xrightarrow{p\cdot\rho}X, \eta_0^{-1}\circ\rho^{-1}\circ\eta)&\longmapsto (A,(s_{cris,\alpha}), \lambda,p\cdot\eta)\label{eqp}\\
\Theta: (X_0\xrightarrow{Fr^2\circ\rho}X^{(p^2)}, \eta_0^{-1}\circ\rho^{-1}\circ\eta)&\longmapsto (A^{(p^2)},(s^{(p^2)}_{cris,\alpha}), \lambda^{(p^2)},\eta^{(p^2)})\label{eqf}
\end{align}
\end{lemma}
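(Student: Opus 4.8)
The plan is to prove all three identities by unwinding Kim's construction of the Rapoport--Zink uniformization map recalled in \eqref{RZmap2}. Fix the chosen $\overline{k}$-point $x_0$ of $\mathscr{S}_{b,\overline{k}}$, its abelian variety $(A_0,(s_{cris,\alpha,0}),\lambda_0,\eta_0)$ with $p$-divisible group $X_0$, and the quasi-isogeny $\rho\colon A_0\to A$ inducing $\rho\colon X_0\to X$ on $p$-divisible groups, where $(A,(s_{cris,\alpha}),\lambda,\eta)$ is the abelian variety of the geometric point we started from. Write $\rho$ also for the induced isomorphism $H_1(A_0,\mathbb{A}_f^p)\xrightarrow{\sim}H_1(A,\mathbb{A}_f^p)$, so that $g:=\eta_0^{-1}\circ\rho^{-1}\circ\eta$ is a well-defined element of $G(\mathbb{A}_f^p)$ modulo $K^p$. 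According to \eqref{RZmap2}, applying $\Theta$ to a pair $(X_0\xrightarrow{\psi}Y,\,g)$ amounts to: clearing denominators in $\psi$ to obtain a genuine isogeny, forming the corresponding quotient abelian variety, transporting $\lambda_0$, $(s_{cris,\alpha,0})$ and $\eta_0$ along $\psi$, and finally composing the transported level structure with $g$. The lemma is just the result of carrying this out for $\psi=\rho$, $\psi=p\cdot\rho$ and $\psi=Fr^2\circ\rho$; the only delicate part is bookkeeping the level structures and the Frobenius twists.

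First I would check \eqref{eq}. The quotient abelian variety attached to $\psi=\rho$ is $A$ itself, and transporting $\lambda_0$ and $(s_{cris,\alpha,0})$ along $\rho$ returns $\lambda$ and $(s_{cris,\alpha})$, because $A$ was the source of the construction. The transported level structure is $\rho\circ\eta_0$, so the output level structure is $\rho\circ\eta_0\circ g=\rho\circ\eta_0\circ\eta_0^{-1}\circ\rho^{-1}\circ\eta=\eta$, which is \eqref{eq}. For \eqref{eqp} I would note that multiplication by $p$ is an automorphism of $X$, hence of $A$, in the isogeny category, so $\psi=p\cdot\rho$ again produces the abelian variety $A$ with the same $\lambda$ and $(s_{cris,\alpha})$; the transported level structure is now $(p\cdot\rho)\circ\eta_0=p\cdot(\rho\circ\eta_0)$, so the output is $p\cdot(\rho\circ\eta_0)\circ g=p\cdot\eta$, which is exactly the level structure appearing in the definition of $\langle p\rangle$. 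This gives \eqref{eqp}.

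For \eqref{eqf} the quasi-isogeny $Fr^2\circ\rho\colon A_0\to A^{(p^2)}$ (composing $\rho$ with the relative Frobenius twice) has quotient abelian variety $A^{(p^2)}$, with polarization $\lambda^{(p^2)}$. Transporting the crystalline tensors along $Fr$ carries $(s_{cris,\alpha})$ to $(s^{(p)}_{cris,\alpha})$: by definition $(s^{(p)}_{cris,\alpha})$ is the pullback of $(s_{cris,\alpha})$ along the arithmetic Frobenius, and since the absolute Frobenius acts as the identity on crystalline cohomology, the pullback of $(s^{(p)}_{cris,\alpha})$ along the relative Frobenius recovers $(s_{cris,\alpha})$ --- this is precisely the computation in the proposition showing that $Fr$ defines a section of $s$. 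Iterating, $Fr^2$ transports $(s_{cris,\alpha})$ to $(s^{(p^2)}_{cris,\alpha})$. For the level structure, that same proposition gives $\eta^{(p)}=Fr\circ\eta \bmod K^p$; applying it twice, the transported level structure $Fr^2\circ\rho\circ\eta_0$ composed with $g$ equals $Fr^2\circ\eta=\eta^{(p^2)}$. This establishes \eqref{eqf}.

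I expect the only genuine obstacle to be pinning down the identification underlying these computations: that the quasi-isogeny $Fr^2\circ\rho$ of $p$-divisible groups corresponds, under the equivalence with abelian varieties up to isogeny, to iterating the relative Frobenius isogeny $A\to A^{(p)}\to A^{(p^2)}$ (and not to an arithmetic Frobenius twist), and that the map it induces on prime-to-$p$ Tate modules is exactly the one in the section proposition. Once this is settled, each of \eqref{eq}, \eqref{eqp}, \eqref{eqf} reduces to the single substitution $g=\eta_0^{-1}\circ\rho^{-1}\circ\eta$, and no input beyond Theorem~\ref{HP}, the description \eqref{RZmap2}, and the section proposition is needed.
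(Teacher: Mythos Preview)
Your proposal is correct and follows essentially the same approach as the paper's own proof: both unwind the construction of $\Theta$ from \eqref{RZmap2}, compute the transported level structure as $\psi\circ\eta_0$ for $\psi=\rho,\ p\cdot\rho,\ Fr^2\circ\rho$, and then compose with $g=\eta_0^{-1}\circ\rho^{-1}\circ\eta$ to obtain $\eta$, $p\cdot\eta$, and $\eta^{(p^2)}$ respectively. Your version is in fact slightly more thorough in that it also tracks the polarization and crystalline tensors and explicitly invokes the section proposition for the identity $\eta^{(p)}=Fr\circ\eta$.
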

\begin{proof}
From the equation
\begin{equation}
V\otimes\mathbb{A}\xrightarrow{\eta_0}H_1(A_0,\mathbb{A}_f^p)\xrightarrow{\rho}H_1(A,\mathbb{A}_f^p)
\end{equation}
one knows that transferring the level structure of $A_0$ to $A$ via $\rho$ is $\rho\circ\eta_0$. From the construction of $\Theta$ in \ref{unif}, the level structure part on the left hand side of \ref{eq} maps to $\rho\circ\eta_0\circ\eta_0^{-1}\circ\rho^{-1}\circ\eta=\eta$. So \ref{eq} is proved.

Similarly, transferring the level structure of $A_0$ to $A$ via $p\cdot\rho$, it is $p\cdot\rho\eta_0$. The level structure map on the left hand side of \ref{eqp} maps under $\Theta$ to $p\cdot\rho\circ\eta_0\circ\eta^{-1}\circ\rho^{-1}\circ\eta=p\cdot\eta$. It agrees with the right hand side.

Finally, transfer $\eta_0$ to $A^{(p)}$ via $\rho$ composed with the relative Frobenius $Fr\circ\rho:A\longrightarrow A^{(p)}$, from the equation
\begin{equation}
V\otimes\mathbb{A}_f^p\xrightarrow{\eta_0}H_1(A_0,\mathbb{A}_f^p)\xrightarrow{\rho}H_1(A,\mathbb{A}_f^p)\xrightarrow{Fr}H_1(A^{(p)},\mathbb{A}_f^p)
\end{equation}
it is $Fr\circ\rho\circ\eta_0$. So under $\Theta$, it is mapped to $Fr\circ\rho\circ\eta_0\circ\eta_0^{-1}\circ\rho^{-1}\circ\eta=Fr\circ\eta$. Apply $Fr$ twice, it is $Fr^2\circ\eta_0$ on the level structure part, it agrees with the right hand side of \ref{eqf}.
\end{proof}

One just has to show that $(X_0\xrightarrow{p\cdot\rho}X,\eta_0\circ\rho^{-1}\circ\eta^{-1})$ and $(X_0\xrightarrow{Fr^2\circ\rho}X^{(p^2)},\eta_0\circ\rho^{-1}\circ\eta^{-1})$ maps to the same irreducible component of $\mathscr{S}_{\overline{k},b}$, for this purpose it is enough to check that $(X_0\xrightarrow{p\cdot\rho}X)$ and $(X_0\xrightarrow{Fr^2\circ\rho}X^{(p^2)})$ are in the same irreducible component of $RZ_b^{red}$. 

In\cite{HP}, Howard and Pappas described the structure of $RZ^{red}_b$ quite explicitly in terms of linear algebra. Recall in section\ref{gsd}, I defined $V$ and $D$. There is a twisted Frobenius endomorphism $F$ of $D_K=D\otimes K$ defined by $b\circ\sigma$. Since $V_K=V\otimes K\subset End(D_K)$, define a twisted Frobenius on $V_K$ by conjugation
\begin{equation*}
\Phi: V_K\longrightarrow V_K,\space f\longmapsto F\circ f\circ F^{-1}
\end{equation*}
From this, define the inner form of $V_{\mathbb{Q}_p}$ by
\begin{equation*}
V^\Phi_K=\{x\in V_K:\Phi(x)=x\}
\end{equation*}
The automorphism group of the basic isocrystal $J_b$ in theorem\ref{unif} is simply GSpin$(V^\Phi_K)$, and it sits inside an exact sequence
\begin{equation*}
1\longrightarrow\mathbb{G}_m\longrightarrow\text{GSpin}(V_K^\Phi)\longrightarrow\text{SO}(V^\Phi_K)\longrightarrow 1
\end{equation*}
In\cite{HP}(5.1.1), they have
\begin{definition}
A $\mathbb{Z}_p$ sublattice $\Lambda$ of $V_K^\Phi$ is called a vertex lattice, if
\begin{equation*}
p\Lambda\subset\Lambda^\lor\subset\Lambda
\end{equation*}
and the type of $\Lambda$ is $t_\Lambda=dim_{k}(\Lambda/\Lambda^\lor)$. A sublattice of $V_K$ is called a special lattice if 
\begin{equation*}
(L+\phi(L))/L\xrightarrow{\sim}W/pW
\end{equation*}
\end{definition}
Given $(\rho:X_0\longrightarrow X_y,(s_\alpha))$, $\rho$ defines a map of the contravariant Dieudonn\'e isocrystal $\rho:\mathbb{D}(X_y)[1/p]\longrightarrow\mathbb{D}(X_0)[1/p]=D_K$. Let $M_y\in D_K$ to be $\rho(\mathbb{D}(X_y))$ in $D_K$. Let $M_{1,y}$ be the lattice $F^{-1}(pM_y)=\rho(V\mathbb{D}(X))$. They defined three $W$-lattices in $V_K$ for $M_y$
\begin{align*}
L_y&=\{x\in V_K:xM_{1,y}\subset M_{1,y}\}\\
L_y^\#&=\{x\in V_K:xM_y\subset M_y\}\\
L_y^{\#\#}&=\{x\in V_K:xM_{1,y}\subset M_y\}
\end{align*}
They satisfy the relation $\Phi(L_y)=L_y^\#$ and $L_y+L_y^\#=L_y^{\#\#}$. Note that $M_y$ and $pM_y$ share all these $L$s. They proved the following\cite{HP}, 6.2.2
\begin{proposition}
There exists a bijection of sets
\begin{align}\label{RZtol}
p^{\mathbb{Z}}\backslash RZ_b^{red}(\overline{k})&\longrightarrow\{\text{special lattices in }L\subset V_K\}\\
(X_y,\rho,(s_\alpha))&\longmapsto L_y
\end{align}
\end{proposition}
Therefore, they can describe $p^\mathbb{Z}\backslash RZ_b^{red}$ in terms of the special lattices in $V_K$. 

They attached a classical Deligne-Lusztig variety $S_\Lambda$ to each vertex lattice $\Lambda$  as follows: Let $\Omega_0=\Lambda/\Lambda^\lor$ be the $t_\Lambda$ dimensional vector space over $k$. Use $Q$ to denote the quadratic form on $V^\Phi_K$. Then $pQ$ is integral valued on $\Lambda$, it makes $\Omega_0$ into a nondegenerated quadratic space over $k$. Define $OGr(\Omega)$ to be the $k$ scheme whose functor of point on a $k$-ring $R$ is
\begin{equation*}
\{\text{totally isotropic local direct summands }\mathscr{L}\subset\Omega\otimes_kR\text{ of dimension }t_\Lambda/2\}
\end{equation*}
Since $OGr(\Omega)$ is defined over $k$, $OGr(\Omega)_{\overline{k}}$ has relative Frobenius endomorphism over $\overline{k}$
\begin{equation*}
\Phi:OGr(\Omega)_{\overline{k}}\longrightarrow OGr(\Omega)^{(p)}_{\overline{k}}=OGr(\Omega)_{\overline{k}}
\end{equation*}
Let $S_\Lambda$ to be the subscheme of $OGr(\Omega)_{\overline{k}}$ whose functor of point on a $\overline{k}$-algebra is given by
\begin{equation*}
S_\Lambda(R)=\{\mathscr{L}\subset OGr(\Omega)(R):dim(\mathscr{L}+\Phi(\mathscr{L}))=t_\Lambda/2 + 1\}
\end{equation*}
It is easy to see
\begin{proposition}
There is a bijection of sets
\begin{equation}\label{ltoS}
\{\text{special lattices }L\subset V_K:\Lambda_W^\lor\subset L\subset\Lambda_W\}\xrightarrow{\sim}S_\Lambda(\overline{k})
\end{equation}
which is $\Phi$-equivalent.
\end{proposition}
The following property of $S_\Lambda$ is known\cite{HP} 5.3.2
\begin{proposition}
$S_\Lambda$ has two connected components $S_\Lambda=S_\Lambda^+\sqcup S_\Lambda^-$ and the Frobenius $\Phi$ interchanges these two components.
\end{proposition}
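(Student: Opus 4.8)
The plan is to isolate the two elementary inputs behind this statement — the classical two–component structure of the orthogonal Grassmannian and a one–line parity count — and to feed the one genuinely geometric ingredient, connectedness of each half, from the Deligne--Lusztig description of $S_\Lambda$ in \cite{HP}. First I would record the classical fact that for a nondegenerate quadratic space $\Omega$ over $\overline{k}$ of even dimension $2m$ (here $2m = t_\Lambda$, which is even because $\Lambda$ is a vertex lattice) the orthogonal Grassmannian $OGr(\Omega)$ of $m$-dimensional totally isotropic subspaces has exactly two connected components $OGr(\Omega)^{+}$ and $OGr(\Omega)^{-}$, each smooth and projective, and that two maximal isotropic subspaces $\mathscr{L}_1,\mathscr{L}_2$ lie in the same component precisely when $m-\dim(\mathscr{L}_1\cap\mathscr{L}_2)$ is even. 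Setting $S_\Lambda^{\pm}=S_\Lambda\cap OGr(\Omega)^{\pm}$ we then have $S_\Lambda=S_\Lambda^{+}\sqcup S_\Lambda^{-}$ as a scheme, and there remain two things to prove: that each $S_\Lambda^{\pm}$ is connected, and that $\Phi$ interchanges them.

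The second point is immediate from the parity criterion. For any $\overline{k}$-point $\mathscr{L}$ of $S_\Lambda$, the defining equation $\dim(\mathscr{L}+\Phi(\mathscr{L}))=t_\Lambda/2+1=m+1$ forces $\dim(\mathscr{L}\cap\Phi(\mathscr{L}))=m-1$, so that $m-\dim(\mathscr{L}\cap\Phi(\mathscr{L}))=1$ is odd and hence $\mathscr{L}$ and $\Phi(\mathscr{L})$ lie in opposite components of $OGr(\Omega)$. Since $\Phi$ is a morphism of $\overline{k}$-schemes which is a homeomorphism on underlying topological spaces, it acts as a permutation of the two-element set $\{OGr(\Omega)^{+},OGr(\Omega)^{-}\}$; the previous sentence shows this permutation is nontrivial, so $\Phi$ interchanges $OGr(\Omega)^{+}$ and $OGr(\Omega)^{-}$, and therefore interchanges $S_\Lambda^{+}$ and $S_\Lambda^{-}$. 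As a byproduct this re-derives that $\Omega_0=\Lambda/\Lambda^{\vee}$ must be of non-split type whenever $S_\Lambda$ is nonempty, since for a split $\Omega_0$ the Frobenius preserves each ruling and the equation $\dim(\mathscr{L}+\Phi(\mathscr{L}))=m+1$ would have no solutions.

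The step that requires real input — and the one I expect to be the main obstacle — is the connectedness of $S_\Lambda^{+}$ (equivalently of $S_\Lambda^{-}$, by the previous paragraph). The approach I would take is the one in \cite{HP}: the relative position of $\mathscr{L}$ and $\Phi(\mathscr{L})$ recorded by $\dim(\mathscr{L}+\Phi(\mathscr{L}))=m+1$ is the minimal nontrivial one across rulings, and matching it to the appropriate double coset in $W_P\backslash W/W_P$ identifies $S_\Lambda$ with the reduced classical Deligne--Lusztig variety $X_P(w)$ for $\text{SO}(\Omega_0)$ over $\mathbb{F}_p$, where $P$ is the stabilizer of a point of $OGr$ and $w$ is an element of full support in the Weyl group of type $D_m$. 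Irreducibility of each connected component of such a Coxeter-type Deligne--Lusztig variety is then Lusztig's theorem, the number of components being two and dictated by the center (equivalently the component group $\text{O}(\Omega_0)/\text{SO}(\Omega_0)$, which also governs $\pi_0(OGr(\Omega))$). Combining this with the parity argument above gives the proposition; the only care needed is the combinatorial verification that the ``relative position one'' condition corresponds to a Weyl-group element of full support in type $D_m$, which is a short computation in that root system and is where I would spend the bulk of the effort if the Deligne--Lusztig identification were not simply quoted from \cite{HP}.
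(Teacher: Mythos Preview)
The paper does not give a proof of this proposition at all: it is stated as a quotation from \cite{HP}, Proposition~5.3.2, with no argument supplied. So there is no ``paper's own proof'' to compare against beyond the bare citation.

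Your proposal is correct and in fact unpacks exactly what is behind the cited result. The two-component structure of $OGr(\Omega)$ and the parity criterion for two maximal isotropics to lie in the same ruling are classical; your computation that $\dim(\mathscr{L}\cap\Phi(\mathscr{L}))=m-1$ forces $\mathscr{L}$ and $\Phi(\mathscr{L})$ into opposite rulings is the clean way to see that $\Phi$ swaps the two halves, and this part is entirely self-contained. You are also right that the only substantive step is the connectedness (equivalently irreducibility) of each $S_\Lambda^{\pm}$, and that this is handled in \cite{HP} by identifying $S_\Lambda$ with a parabolic Deligne--Lusztig variety of Coxeter type for the non-split even orthogonal group and then invoking the known irreducibility of such varieties. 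Your sketch of that identification (relative position one across rulings $\leadsto$ full-support element in type $D_m$) is the correct outline; if you wanted to make this self-contained rather than citing \cite{HP}, the combinatorial check you flag is indeed the main thing to write out, together with the observation that the $\mathbb{F}_p$-form $\Omega_0$ is non-split (which you recover as a byproduct, and which is needed for the Frobenius-twisted Weyl element to be of Coxeter type).

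In short: your argument is sound and is precisely the content of the reference the paper cites; the paper itself offers nothing beyond that citation.
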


For each special lattice $L$, there exists a unique smallest vertex lattice $\Lambda$ s.t. $\Lambda_W^\lor\subset L\subset\Lambda_W$. $\Lambda$ can be found in the following way. Define
\begin{equation}\label{iter}
L^{(r)}=L+\Phi(L)+\dots+\Phi^r(L)
\end{equation}
There exists a smallest $d$ s.t. $L^{(d)}=L^{(d+1)}$. Then $L^{(d)}$ descents to a vertex lattice $\Lambda(L)$ in $V_K^\Phi$. 

Define $RZ^{red}_\Lambda\subset RZ_b^{red}$ as the closed formal subscheme define by the condition
\begin{equation*}
\rho\circ\Lambda^\lor\circ\rho^{-1}\subset End(X_0)
\end{equation*}
Then $RZ_b^{red}=\cup_{\Lambda}RZ^{red}_\Lambda$ and $RZ^{red}_\Lambda(\overline{k})$ consists of the special lattices sitting between $\Lambda_W^\lor$ and $\Lambda_W$. For two lattices, $\Lambda$ and $\Lambda'$, $RZ^{red}_\Lambda$ intersects $RZ^{red}_{\Lambda'}$ if and only if $\Lambda\cap\Lambda'$ is again a vertex lattice, then $RZ^{red}_\Lambda\cap RZ^{red}_{\Lambda'}=RZ^{red}_{\Lambda\cap\Lambda'}$.

Howard-Pappas proved the following fact\cite{HP},6.3.1
\begin{theorem}
There is a unique isomorphism of $\overline{k}$-schemes
\begin{equation*}
p^\mathbb{Z}\backslash RZ_\Lambda^{red}\xrightarrow{\sim}S_\Lambda
\end{equation*}
such that on the level of $\overline{k}$-points, it is the composition of the maps \ref{RZtol} and \ref{ltoS}.
\end{theorem}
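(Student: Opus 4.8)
The result is the structure theorem of Howard--Pappas (\cite{HP}, 6.3.1), so the plan is to recall how a statement of this shape is proved rather than to offer a new argument. We are already handed the bijection \eqref{maponpoint} identifying $p^{\mathbb{Z}}\backslash RZ^{red}_b(\overline{k})$ with special lattices $L\subset V_K$, and the bijection \eqref{maptoS} identifying the special lattices caught between $\Lambda^{\lor}_W$ and $\Lambda_W$ with $S_{\Lambda}(\overline{k})$; since $RZ_{\Lambda}(\overline{k})$ consists precisely of the special lattices in that window, their composite is a bijection of sets, and the whole content of the theorem is to promote it to a morphism of $k$-schemes and then to an isomorphism.

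Uniqueness is the formal half and I would dispatch it first. Both $p^{\mathbb{Z}}\backslash RZ^{red}_{\Lambda}$ and $S_{\Lambda}$ are reduced schemes of finite type over $k$, and $S_{\Lambda}$, as a locally closed subscheme of an orthogonal Grassmannian, is separated; hence the equalizer of two morphisms inducing the same map on $\overline{k}$-points is a closed subscheme containing every closed point, so because a finite-type $k$-scheme is Jacobson and the source is reduced, that equalizer is the whole source. The same principle shows that the morphism, once built, automatically intertwines the two Frobenius actions and respects the decomposition $S_{\Lambda}=S^{+}_{\Lambda}\sqcup S^{-}_{\Lambda}$, since both compatibilities already hold on $\overline{k}$-points.

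For existence I would construct the morphism directly on moduli using relative Dieudonn\'e theory. Over a reduced $\overline{k}$-scheme $R$ the universal object of $RZ^{red}_{\Lambda}$ is a $p$-divisible group $X$ with a quasi-isogeny $\rho$ to $X_0$ and crystalline tensors; applying Zink's theory of displays (equivalently, the Dieudonn\'e crystal) one obtains a family of $W$-lattices $M\subset D_K$ with its Frobenius, and one forms the middle lattice $M_1=F^{-1}(pM)$ and the lattice $L=\{x\in V_K:xM_1\subset M_1\}$ exactly as in \eqref{spl}. The defining closed condition $\rho\circ\Lambda^{\lor}\circ\rho^{-1}\subset\mathrm{End}(X)$ of $RZ_{\Lambda}$ translates into $\Lambda^{\lor}_W\subset L\subset\Lambda_W$, so reduction modulo $\Lambda^{\lor}_W$ turns $L$ into a totally isotropic local direct summand of $\Omega=\Lambda/\Lambda^{\lor}$ of rank $t_{\Lambda}/2$, i.e. an $R$-point of $OGr(\Omega)$, while the special-lattice relation $(L+\Phi(L))/L\xrightarrow{\sim}W/pW$ becomes precisely the transversality condition $\dim(\mathscr{L}+\Phi(\mathscr{L}))=t_{\Lambda}/2+1$ defining $S_{\Lambda}$. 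This yields a morphism $p^{\mathbb{Z}}\backslash RZ^{red}_{\Lambda}\to S_{\Lambda}$ which on $\overline{k}$-points is the composite of \eqref{maponpoint} and \eqref{maptoS}.

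It remains to see this morphism is an isomorphism, and the cleanest route is to build its inverse. From an $R$-point of $S_{\Lambda}$, a totally isotropic subsheaf $\mathscr{L}\subset\Omega\otimes_k R$ satisfying the transversality condition, one lifts uniquely to a lattice $L$ with $\Lambda^{\lor}_W\subset L\subset\Lambda_W$; the transversality guarantees $L$ is a special lattice in $V_K$, and from $L$ together with its twist $\Phi(L)$ one reconstructs a filtered $F$-lattice, hence a display carrying the GSpin-crystalline tensors, hence by Zink's equivalence a $p$-divisible group with quasi-isogeny to $X_0$. That the two composites are the identity then follows, via the uniqueness principle of the second paragraph, from the already-known bijectivity on $\overline{k}$-points. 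The main obstacle is exactly this reconstruction step: carrying it out \emph{functorially} over an arbitrary base $R$ --- as opposed to over geometric points only --- requires the theory of displays and windows with tensors together with Kim's control of the integral model, and this functoriality is the technical heart of \cite{HP}; the Grassmannian bookkeeping, the translation of the vertex-lattice condition, and uniqueness are by comparison routine.
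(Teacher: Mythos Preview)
The paper does not supply its own proof of this theorem; it merely quotes it as a result of Howard--Pappas and cites \cite{HP}, 6.3.1. Your proposal correctly recognizes this and offers a faithful outline of the Howard--Pappas strategy (uniqueness from reducedness and Jacobson, existence via relative Dieudonn\'e/display theory, inverse from the special lattice), so there is nothing to compare: your treatment is consistent with --- and strictly more informative than --- the paper's bare citation.
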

Also, fixing a $\Lambda$, there is a decomposition $RZ^{red}_\Lambda=RZ^{red,(odd)}_\Lambda\sqcup RZ^{red,(even)}_\Lambda$, in which $RZ^{red,(odd)}$ means the locus with odd multiplicator, $RZ^{red,(even)}$ means the locus with even multiplicator. In\cite{HP},6.3.2
\begin{proposition}
There is an isomorphism of schemes over $\overline{k}$
\begin{equation}\label{decomp}
p^\mathbb{Z}\backslash RZ^{red,(odd)}_\Lambda\sqcup p^\mathbb{Z}\backslash RZ^{red,(even)}_\Lambda\xrightarrow{\sim}S_\Lambda^+\sqcup S_\Lambda^-
\end{equation}
which is Frobenius equivariant.
\end{proposition}
More precisely, this means $RZ^{red,(odd)}_\Lambda$ maps to one of $S_\Lambda^{\pm}$ and $RZ_\Lambda^{red,(even)}$ maps to the other. The irreducible components of $RZ^{red}$ are exactly all $RZ^{red,(l)}_\Lambda$ with $\Lambda$ the largest type for all integers $l$. 

\subsection{Comparison of the support of $t(C\cdot\langle p\rangle)$ and $(C\cdot F^2)$} Let's see how multiplication by $p$ and $Fr$ move the irreducible components of $RZ_b^{red}$.

Take an irreducible component $RZ_\Lambda^{red,(l)}$, i.e. fix an integer $l$ and a lattice $\Lambda$ of largest type. To see how multiplication by $p$ moves this irreducible component, one just has to see how it moves a general $\overline{k}$-valued point $y$ on $RZ_\Lambda^{red,(l)}$ where a general point means a point lying on only one irreducible component rather than lying on the intersection of many components. From the above description by Howard-Pappas, this means the only vertex lattice whose $W$-span contains the special lattice corresponding to $y$ is $\Lambda$. 
\begin{lemma}
Let $(X_0\xrightarrow{\rho}X_y)$ be the corresponding quasi-isogeny of p-divisible groups for $y$. Then the point corresponding to $(X_0\xrightarrow{p\cdot\rho}X_y)$ is a general point of $RZ_\Lambda^{red,(l+2)}$.
\end{lemma}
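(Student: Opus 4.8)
The plan is to reduce the statement to the Howard--Pappas linear-algebra description of $RZ^{red}_b$ recalled above, where it becomes essentially formal: multiplying the quasi-isogeny $\rho$ by $p$ leaves all of the $W$-lattices attached to $y$ unchanged, while it raises the power of $p$ in the multiplicator by $2$.

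First I would verify that $(X_0\xrightarrow{p\rho}X_y)$ is again a $\overline{k}$-point of $RZ^{red}_b$ and that it lies in $RZ^{red,(l+2)}$. The quasi-isogeny $p\cdot\rho$ is obtained from $\rho$ by composing with multiplication by $p$, which corresponds to the central $\mathbb{G}_m\subset\mathrm{GSpin}(V_{\mathbb{Q}_p})$ in the exact sequence $1\to\mathbb{G}_m\to\mathrm{GSpin}(V_{\mathbb{Q}_p})\to\mathrm{SO}(V_{\mathbb{Q}_p})\to 1$; since the crystalline tensors $(s_\alpha)$ are $\mathrm{GSpin}$-invariant they are in particular invariant under this central element, and the Hodge-filtration condition is likewise unaffected, so $p\rho$ still defines a point of $RZ^{red}_b$. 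For the multiplicator, the polarization relation $\rho^\lor\circ\lambda\circ\rho=c(\rho)^{-1}\lambda_0$ gives $(p\rho)^\lor\circ\lambda\circ(p\rho)=p^2\,\rho^\lor\circ\lambda\circ\rho$, so $\mathrm{ord}_p$ of the multiplicator increases by $2$ (this is the same computation showing that $\langle p\rangle$ has multiplicator $p^2$); hence $(X_0\xrightarrow{p\rho}X_y)\in RZ^{red,(l+2)}_b(\overline{k})$.

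Next I would pin down the special lattice and the minimal vertex lattice of the new point. Writing $M_{p\rho}=(p\rho)(\mathbb{D}(X_y))=pM_y$, we also get $M_{1,p\rho}=F^{-1}(pM_{p\rho})=pM_{1,y}$, and substituting these into the three defining conditions, in particular into \eqref{spl}, the scaling by $p$ cancels on both sides: $L_{p\rho}=\{x\in V_K:x\,(pM_{1,y})\subset pM_{1,y}\}=L_y$, and similarly $L^{\#}_{p\rho}=L^{\#}_y$ and $L^{\#\#}_{p\rho}=L^{\#\#}_y$ --- this is exactly the remark that $M_y$ and $pM_y$ share all of these lattices. Therefore, under the bijection \eqref{maponpoint}, the point $p\rho$ corresponds to the same special lattice $L_y$ as $y$; and since the minimal vertex lattice $\Lambda(L)$ is constructed from $L$ alone by stabilizing the filtration \eqref{phiL}, we obtain $\Lambda(L_{p\rho})=\Lambda(L_y)=\Lambda$. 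In particular $p\rho\in RZ^{red}_\Lambda$, so $p\rho$ represents a point of $RZ^{red,(l+2)}_\Lambda$.

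Finally I would conclude genericity. That $y$ is a general point of $RZ^{red,(l)}_\Lambda$ means it lies on only the one irreducible component, equivalently (in the Howard--Pappas dictionary) that $\Lambda$ is the unique largest-type vertex lattice with $\Lambda^{\lor}_W\subset L_y\subset\Lambda_W$; this is a property of $L_y$ alone, and we have just shown $L_{p\rho}=L_y$. Combining this with the multiplicator computation of the second step yields that $p\rho$ is a general point of $RZ^{red,(l+2)}_\Lambda$, which is exactly the claim. The only delicate point I anticipate is keeping the normalization conventions straight --- which direction the multiplicator shifts, and the precise form of the genericity condition --- together with recording that multiplication by $p$ is precisely the $p^{\mathbb{Z}}$-action used to form $p^{\mathbb{Z}}\backslash RZ^{red}_b$, so that $y$ and $p\rho$ are genuinely distinct points of $RZ^{red}_\Lambda$ differing only in their multiplicator; once these are fixed, the argument is purely formal.
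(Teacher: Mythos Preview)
Your argument is correct and follows essentially the same route as the paper: both observe that replacing $\rho$ by $p\rho$ leaves the special lattice $L_y$ unchanged (since $M_y$ and $pM_y$ give the same $L$'s), hence the same unique vertex lattice $\Lambda$, while raising the multiplicator by $p^2$. Your write-up is more detailed --- you explicitly verify that $p\rho$ is still an $RZ$-point and spell out the genericity step --- but the core idea is identical.
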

\begin{proof}
From the definition of $L_y$ in \ref{eq}, $(X_0\xrightarrow{\rho}X_y)$ and $(X_0\xrightarrow{p\cdot\rho}X_y$ map to the same special lattice. Therefore they both map into $p^\mathbb{Z}\backslash RZ_\Lambda^{red}$ for the unique $\Lambda$, by taking their special lattice. Since multiplying $p$ increases the multiplicator by $p^2$, $(X_0\xrightarrow{p\cdot\rho}X_y^{(p)})\in RZ_\Lambda^{red,(l+2)}$. 
\end{proof}
\begin{lemma}
Let $y$ be as above, then the point corresponding to $(X_0\xrightarrow{Fr\circ\rho}X_y^{(p)})$ is a general point of $RZ^{red,(l+1)}_\Lambda$. Therefore, the point corresponding to $(X_0\xrightarrow{Fr^2\circ\rho}X_y^{(p^2)})$ is a general point of $RZ^{red,(l+2)}_\Lambda$.
\end{lemma}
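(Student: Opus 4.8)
The plan is to pass to the Howard--Pappas linear-algebra description of $RZ^{red}_b$ and to compute directly the effect of the relative Frobenius on the special lattice attached to $y$. Write $(X_0\xrightarrow{\rho}X_y)$ for the quasi-isogeny of $p$-divisible groups corresponding to the general point $y$ of $RZ^{red,(l)}_\Lambda$, put $M_y=\rho(\mathbb{D}(X_y))\subset D_K$ and $M_{1,y}=F^{-1}(pM_y)$, so that the associated special lattice $L_y$ of \eqref{spl} has $\Lambda$ as its unique minimal vertex lattice (here $y$ being general means exactly that $\Lambda$ is the only vertex lattice with $\Lambda^{\lor}_W\subset L_y\subset\Lambda_W$), and $\Lambda$ is of largest type. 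I would then carry out three steps: compute the lattice attached to $Fr\circ\rho$, check that this lattice is again general with the same vertex lattice $\Lambda$, and track the multiplicator.

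For the first step: on contravariant Dieudonn\'e modules the relative Frobenius $Fr\colon X_y\to X_y^{(p)}$ induces, after the standard identifications, the isocrystal Frobenius $F$, so that $(Fr\circ\rho)(\mathbb{D}(X_y^{(p)}))=F(M_y)$ inside $D_K$. Hence the corresponding $M_1$-lattice is $F^{-1}(p\,F(M_y))=F^{-1}(F(pM_y))=pM_y$, and the special lattice of $(X_0\xrightarrow{Fr\circ\rho}X_y^{(p)})$ is $\{x\in V_K:\ x\cdot pM_y\subset pM_y\}=\{x\in V_K:\ xM_y\subset M_y\}=L_y^{\#}=\Phi(L_y)$, using the relation $\Phi(L_y)=L_y^{\#}$. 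Since $Fr$ preserves the crystalline tensors $(s_\alpha)$, the pair $(X_0\xrightarrow{Fr\circ\rho}X_y^{(p)})$ genuinely defines a point of $RZ^{red}$, and under the Howard--Pappas dictionary it corresponds to the special lattice $\Phi(L_y)$.

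For the second step I would use that every vertex lattice $\Lambda'$ is a $\mathbb{Z}_p$-lattice in the $\Phi$-fixed space $V_K^{\Phi}$, so that $\Lambda'_W$ and $(\Lambda')^{\lor}_W$ are $\Phi$-stable; hence $(\Lambda')^{\lor}_W\subset\Phi(L_y)\subset\Lambda'_W$ if and only if $(\Lambda')^{\lor}_W\subset L_y\subset\Lambda'_W$, i.e.\ $\Phi(L_y)$ and $L_y$ lie on exactly the same $RZ_{\Lambda'}$. Thus $\Phi(L_y)$ is again general and has the same minimal vertex lattice $\Lambda$. (Equivalently, this is the statement that the scheme-theoretic relative Frobenius of $RZ^{red}$ corresponds under \eqref{RZtoS} to the Frobenius of $S_\Lambda$, which interchanges $S^{+}_\Lambda$ and $S^{-}_\Lambda$ and so flips the parity of the multiplicator.) For the third step, the relative Frobenius in the Kuga--Satake family has multiplicator $p$, so it raises the multiplicator exponent by one. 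Combining the three steps, $(X_0\xrightarrow{Fr\circ\rho}X_y^{(p)})$ is a general point of the irreducible component $RZ^{red,(l+1)}_\Lambda$; applying the same analysis once more with $X_y^{(p)}$ in place of $X_y$ shows that $(X_0\xrightarrow{Fr^2\circ\rho}X_y^{(p^2)})$ has special lattice $\Phi^2(L_y)$, still general with minimal vertex lattice $\Lambda$, and multiplicator exponent $l+2$, which is the assertion. The main obstacle, and the only place genuine care is needed, is the Dieudonn\'e computation $M_y\mapsto F(M_y)$ of the second step: getting the $F$ versus $V$ normalization and the $\sigma$-twist built into $\mathbb{D}(X_y^{(p)})$ exactly right; once that is in place, the rest is formal manipulation of the lattices.
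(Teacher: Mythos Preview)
Your proposal is correct and follows essentially the same approach as the paper: compute that the special lattice attached to $(X_0\xrightarrow{Fr\circ\rho}X_y^{(p)})$ is $\Phi(L_y)$, then argue it is general with the same vertex lattice $\Lambda$, and finally track the multiplicator. Your Dieudonn\'e computation ($M\mapsto F(M_y)$, $M_1\mapsto pM_y$, hence $L\mapsto L_y^{\#}=\Phi(L_y)$) is in fact more detailed than the paper's; the only mild difference is that you deduce $\Phi(L_y)\subset\Lambda_W$ from the $\Phi$-stability of $\Lambda_W$ (since $\Lambda$ is a $\mathbb{Z}_p$-lattice in $V_K^{\Phi}$), whereas the paper rephrases this via the explicit formula $\Lambda_W=L_y^{(d)}$, but these are the same observation.
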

\begin{proof}
For $(X_0\xrightarrow{Fr\circ\rho}X_y^{(p)})$, the induced map on the Dieudonn\'e isocrystals maps $\mathbb{D}(X_y^{(p)})[1/p]$ to the lattice is $\Phi(L_y)$ in $V_K$. According to the discussion above \ref{iter}, the vertex lattice $\Lambda$ is characterized by
\begin{equation}
\Lambda_W=L^{(d)}_y=L_y+\Phi(L_y)+\dots+\Phi^d(L_y)
\end{equation}
for $d$ the maximal type. So $\Phi(\Lambda_W)=\Lambda_W=\Phi(L_y)+\dots+\Phi^d(\Phi(L_y))$, i.e. $\Phi(L_y)$ is also contained in $\Lambda_W$. The lattice $\Lambda$ is the unique vertex lattice whose $W$-span contains $\Phi(L_y)$, because if $\Phi(L_y)$ is contained in the $W$-span of another vertex lattice $\Lambda'$, then $L_y$ is also contained in $\Lambda'_W$. Therefore, $(X_0\xrightarrow{Fr\circ\rho}X_y^{(p)})$ is also contained in $RZ^{red}_\Lambda$. More precisely, if $y$ maps to $S^\pm_\Lambda$, then $Fr(y)$ maps to $S^\mp_\Lambda$ under the map \ref{decomp}.

Since the multiplicator of the Frobenius is $p$, so $(X_0\xrightarrow{Fr\circ\rho}X^{(p)})\in RZ^{red,(l+1)}_\Lambda$. Apply the Frobenius twice, $(X_0\xrightarrow{Fr^2\circ\rho}X_y^{(p^2)})$ is a general point on $RZ_\Lambda^{red,(l+2)}$. 
\end{proof}
\begin{proposition}\label{supp}
Let $(X_0\xrightarrow{\rho}X)$ be as defined at the beginning of section 7.4, then $(X_0\xrightarrow{p\cdot\rho}X,\eta_0\circ\rho^{-1}\circ\eta^{-1})$ and $(X_0\xrightarrow{Fr^2\circ\rho}X^{(p^2)},\eta_0\circ\rho^{-1}\circ\eta^{-1})$ map to the same irreducible component of $\mathscr{S}_{\overline{k},b}$. Therefore $t(C\cdot\langle p\rangle)$ and $t(C\cdot F^2)$ have the same support in $\mathscr{S}_{k,b}$. So $C\cdot\langle p\rangle$ and $C\cdot F^2$ have the same support in $p-Isog_k$. 
\end{proposition}
\begin{proof}
Because they are on the same irreducible component of $RZ^{red}_b\times G(\mathbb{A}_f^p)/K^p$, which is the unique irreducible component containing both of them, from the two lemmas just above. Therefore they map to the same irreducible component of $\mathscr{S}_{\overline{k},b}$ under the uniformization map, and this is the unique irreducible component containing their image. This component is just $t(C\cdot\langle p\rangle)=t(C\cdot F^2)$. 
\end{proof}

\subsection{Comparison of multiplicity: proof of the conjecture\ref{conj}} Finally it is the time to prove the conjecture\ref{conj}.
\begin{theorem}
Let $H(X)$ be the Hecke polynomial \ref{hkp}, considered as a polynomial with coefficients in $\mathbb{Q}[p-Isog_k]$ via $\sigma\circ h$. Let $F$ be the Frobenius cycles as defined in section\ref{twosections}. Then $H(F)=0$.
\end{theorem}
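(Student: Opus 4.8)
The plan is to combine the factorisation of the Hecke polynomial in \eqref{Hkp} with the ordinary congruence relation (Theorem \eqref{ord}) and a multiplicity count on the supersingular locus. Write $H_{\mu^{KS}}(X)=Q_1(X)\cdot R(X)$, where $Q_1(X)=X^{2}-p^{2(n-1)}\cdot s$ is the special factor of \eqref{Hkp} and $R(X)$ is the product of the remaining $2(n-1)$ linear factors. The cocharacter of $T_{\Delta}$ attached to $s$ is the central $\mathbb{G}_m\hookrightarrow\text{GSpin}(V_{\mathbb{Q}_p})$, which represents the coset $K_ppK_p$; hence $\sigma\circ h(s)=\langle p\rangle$. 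Since $\mathscr{H}(G(\mathbb{Q}_p)//K_p)$ is commutative and both $F$ and $\langle p\rangle$ lie in the centre of $\mathbb{Q}[p-Isog_k]$, the subalgebra generated by the image of $\sigma\circ h$ together with $F$ is commutative, so we may factor $H_{\mu^{KS}}(F)=(F^{2}-p^{2(n-1)}\langle p\rangle)\cdot R(F)$ inside $\mathbb{Q}[p-Isog_k]$.

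First I would reduce to a statement about individual supersingular cycles. By Proposition \eqref{dimofsp} every coefficient of $\sigma\circ h(H_{\mu^{KS}})$ lies in $A_{2n-2}(p-Isog_k)$, and since $F$ is finite over $\mathscr{S}_k$ under both $s$ and $t$, the same is true after multiplying by powers of $F$; thus $H_{\mu^{KS}}(F)$ and $R(F)$ lie in $\mathbb{Q}[p-Isog_k]\cap A_{2n-2}(p-Isog_k)$. The argument of Proposition \eqref{nofh} shows that $p-Isog_k$ has no finite-height irreducible component of dimension $2n-2$, so any such element is a $\mathbb{Q}$-combination of $\mu$-ordinary and supersingular components; write $R(F)=R^{ord}+R^{ss}$ accordingly. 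Relative Frobenius and multiplication by $p$ preserve the Newton stratum of an abelian variety, so $(F^{2}-p^{2(n-1)}\langle p\rangle)\cdot R^{ord}$ is a combination of $\mu$-ordinary components and $(F^{2}-p^{2(n-1)}\langle p\rangle)\cdot R^{ss}$ of supersingular ones. On the other hand $cl\circ ord$ deletes exactly the non-$\mu$-ordinary components and $cl\circ ord(H_{\mu^{KS}}(F))=0$ by Theorem \eqref{ord}, so, again using the absence of finite-height components of dimension $2n-2$, the cycle $H_{\mu^{KS}}(F)$ is a combination of supersingular components only. Comparing $\mu$-ordinary parts in $H_{\mu^{KS}}(F)=(F^{2}-p^{2(n-1)}\langle p\rangle)R^{ord}+(F^{2}-p^{2(n-1)}\langle p\rangle)R^{ss}$ forces $(F^{2}-p^{2(n-1)}\langle p\rangle)R^{ord}=0$; hence it suffices to prove $F^{2}\cdot C=p^{2(n-1)}\,\langle p\rangle\cdot C$ in $\mathbb{Q}[p-Isog_{\overline{k}}]$ for every supersingular irreducible component $C$ of dimension $2n-2$, for then $H_{\mu^{KS}}(F)=(F^{2}-p^{2(n-1)}\langle p\rangle)R^{ss}=0$.

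For this equality there are two inputs. That $F^{2}\cdot C$ and $\langle p\rangle\cdot C$ have the same support is Proposition \eqref{target}, obtained from the Howard--Pappas description of $RZ^{red}_{b}$ recalled in sections 7.4--7.5. For the multiplicities I would argue as follows. After shrinking $K^{p}$ we may assume, by Proposition \eqref{diag}, that $(s,t): p-Isog_{\overline{k},b}\to\mathscr{S}_{\overline{k},b}\times\mathscr{S}_{\overline{k},b}$ is a closed immersion. Since $F^{2}=F\cdot F$ is the image of the section $A\mapsto(A\xrightarrow{Fr^{2}}A^{(p^{2})})$, it is isomorphic to $\mathscr{S}_{\overline{k}}$ via $s$ and its target map is the $p^{2}$-power Frobenius $A\mapsto A^{(p^{2})}$; because $C$ is supersingular, $t(C)$ is an irreducible component of $\mathscr{S}_{\overline{k},b}$, and by Theorem \eqref{HP}, item (3), it has dimension $n-1$. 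Using $C\times_{t,s}F^{2}\cong C$ and the closed immersion $(s,t)$, the cycle $F^{2}\cdot C$ is the pushforward of $[(s,t)(C)]$ along $\mathrm{id}\times\mathrm{Frob}_{p^{2}}$; on the $t$-factor this is the $p^{2}$-power Frobenius of the $(n-1)$-dimensional variety $t(C)$, which is finite purely inseparable of degree $(p^{2})^{n-1}=p^{2(n-1)}$, while along the fibres of $C\to t(C)$ it is an isomorphism. Hence $F^{2}\cdot C=p^{2(n-1)}\,[Z]$, where $Z$ is the common support. The cycle $\langle p\rangle$, on the other hand, is the graph of the automorphism $(A,\eta)\mapsto(A,p\eta)$ of $\mathscr{S}_{\overline{k},b}$, so the analogous pushforward has degree $1$ and $\langle p\rangle\cdot C=[Z]$. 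Therefore $F^{2}\cdot C=p^{2(n-1)}\,\langle p\rangle\cdot C$, and by the reduction above $H_{\mu^{KS}}(F)=0$, which in particular settles conjecture \eqref{conj} in the quasi-split non-split case. The main obstacle is exactly this multiplicity computation: one must check that the inseparable degree contributed by $F^{2}$ equals the Frobenius degree $p^{2(n-1)}$ of the $(n-1)$-dimensional basic locus --- neither the larger degree $p^{2(2n-2)}$ of the full Shimura variety nor a smaller factor --- and that the fibre-product schemes $C\times_{t,s}F^{2}$ and $C\times_{t,s}\langle p\rangle$ carry no hidden nilpotents; this is where the Howard--Pappas dimension formula forces the special factor $X^{2}-p^{2(n-1)}s$ to be of precisely the right size to annihilate the supersingular cycles, in the spirit of Koskivirta's argument.
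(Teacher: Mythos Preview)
Your proposal is correct and follows essentially the same route as the paper: factor $H_{\mu^{KS}}(X)=(X^{2}-p^{2(n-1)}\langle p\rangle)\cdot R(X)$, use the ordinary congruence relation to reduce to annihilating supersingular top-dimensional components, then compare support via Howard--Pappas and multiplicity via the degree $p^{2(n-1)}$ of the $p^{2}$-power Frobenius on the $(n-1)$-dimensional basic locus. The only cosmetic difference is in the reduction: you decompose $R(F)=R^{ord}+R^{ss}$ and compare ordinary parts directly, whereas the paper subtracts $H'_{\mu^{KS}}(F)=(F^{2}-p^{2(n-1)}\langle p\rangle)\cdot(cl\circ ord\, R)(F)$ and works with $(R-R')(F)$; both formulations use implicitly that multiplication by $F$ and by $\langle p\rangle$ preserve the Newton type of a component, so that $cl\circ ord$ commutes with evaluation at $F$, and your phrasing makes this a bit more explicit.
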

\begin{proof}
To prove this theorem, one needs to compare the multiplicity of $C\cdot F^2$ and $C\cdot\langle p\rangle$ in $\mathbb{Q}[p-Isog_k]$. It is enough to compare them in $\mathbb{Q}[p-Isog_{\overline{k}}]$. For this purpose, follow the commutative diagram of Koskivirta\cite{K}, lemma 28

\[
\xymatrix{C\times_{t,s}F^2 \ar[r]^c & X\ar[r]^-{(s,t)} & C_s\times \mathscr{F}^2(C_t)\\
C \ar[u]^{\simeq} \ar[ur]^{c_F} \ar[r]^{(s,t)} \ar[dr]^{c_p} \ar[d]_{\simeq} & C_s\times C_t \ar[ur]^{id\times\mathscr{F}^2} \ar[dr]^{id\times\langle p\rangle}\\
C\times_{t,s}\langle p \rangle \ar[r]^{c} & X \ar[r]^-{(s,t)} & C_s\times\mathscr{F}^2(C_t)}
\]
In the diagram, $C_s$ is the support of the image $s(C)$ in $\mathscr{S}_{\overline{k},b}$. Similar for $C_t$. The horizontal arrows $c$ means the same as in \ref{mult}, i.e.the morphism:$p-Isog_{\overline{k}}\times_{t,\mathscr{S}_{\overline{k}},s}p-Isog_{\overline{k}}\longrightarrow p-Isog_{\overline{k}}$. In the diagram the $X$ means the support of $c_*(C\times_{t,s}F^2)$ in $p-Isog_{\overline{k}}$. The $(s,t)$ is the same as in \ref{sandt}:$p-Isog_{\overline{k}}\longrightarrow\mathscr{S}_{\overline{k},b}\times\mathscr{S}_{\overline{k},b}$. The $\mathscr{F}$ is the relative Frobenius on $C_t$. The map $\langle p\rangle:C_t\longrightarrow\mathscr{F}^2(C_t)$ just means $C_t\longrightarrow p-Isog_{\overline{k}}\xrightarrow{t}\mathscr{S}_{\overline{k},b}$ in which the first arrow is the section defining $\langle p\rangle$. This is well defined since $\mathscr{F}(C_t)=t(C\cdot F^2)=t(C\cdot\langle p\rangle)$ following \ref{supp}.

Let's compare the degrees of the two horizontal $c$s in the above diagram. Since the two left vertical arrows are isomorphisms, it just needs to compare the degree of $c_F$ and $c_p$. From the right part of the commutative diagram, this is reduced to compare the degrees of the map $\langle p\rangle:C_t\longrightarrow\mathscr{F}^2(C_t)$ and the square of the relative Frobenius $\mathscr{F}^2$. The former one obviously has degree $1$. The latter one has degree $p^{2(n-1)}$, since the relative Frobenius $\mathscr{F}$ has degree $p^{dim\mathscr{S}_{k,b}}$ and in this case $dim\mathscr{S}_{k,b}=n-1$. Therefore $C\cdot F^2=c_*(C\times_{t,s}F^2)=p^{2(n-1)}c_*(C\times_{t,s}\langle p\rangle)=p^{2(n-1)}\cdot C\cdot\langle p\rangle$. In other words, $C\cdot(F^2-p^{2(n-1)}\langle p\rangle)=0$. 

Recall the factorization of the Hecke polynomial \ref{hkp} and in \ref{ideas}, I wrote the expansion of the factors except $(X^2-p^{2(n-1)}\cdot\langle p\rangle)$ as
\begin{equation*}
\Sigma_{i=0}^mb_i\cdot X^i + \Sigma_{i=0}^mc_i\cdot X^i
\end{equation*}
where $c_i$s are supported on the basic cycles of $p-Isog_k$. From the fact just proved in the last paragraph, $(F^2-p^{2(n-1)}\cdot\langle p\rangle)\cdot(\Sigma_{i=0}^mF^i)=0$. Recall \ref{twoparts}, combining this fact with the ordinary congruence relation, $H(F)=0$. 
\end{proof}

\bibliographystyle{plain}
\bibliography{bib-2}

\end{document}